\newenvironment{Acknowledgements}{Acknowledgements.}
\begin{document}

\newtheorem{The}{Theorem}[section]
\newtheorem{Lem}[The]{Lemma}
\newtheorem{Prop}[The]{Proposition}
\newtheorem{Cor}[The]{Corollary}
\newtheorem{Rem}[The]{Remark}
\newtheorem{Obs}[The]{Observation}
\newtheorem{SConj}[The]{Standard Conjecture}
\newtheorem{Titre}[The]{\!\!\!\! }
\newtheorem{Conj}[The]{Conjecture}
\newtheorem{Question}[The]{Question}
\newtheorem{Prob}[The]{Problem}
\newtheorem{Def}[The]{Definition}
\newtheorem{Not}[The]{Notation}
\newtheorem{Claim}[The]{Claim}
\newtheorem{Conc}[The]{Conclusion}
\newtheorem{Ex}[The]{Example}
\newtheorem{Fact}[The]{Fact}
\newtheorem{Formula}[The]{Formula}
\newtheorem{Formulae}[The]{Formulae}
\newcommand{\C}{\mathbb{C}}
\newcommand{\R}{\mathbb{R}}
\newcommand{\N}{\mathbb{N}}
\newcommand{\Z}{\mathbb{Z}}
\newcommand{\Q}{\mathbb{Q}}
\newcommand{\Proj}{\mathbb{P}}
\newcommand{\Rc}{\mathcal{R}}
\newcommand{\Oc}{\mathcal{O}}

\begin{center}

{\Large\bf  Co-polarised Deformations of Gauduchon Calabi-Yau $\partial\bar{\partial}$-Manifolds and Deformation of $p$-SKT $h$-$\partial\bar{\partial}$-Manifolds}

\end{center}

\begin{center}

{\large Houda Bellitir}

\end{center}

\vspace{1ex}

\begin{abstract}
The main result of this paper is to study the local deformations of Calabi-Yau $\partial\bar{\partial}$-manifold that are co-polarised by the Gauduchon metric by considering the subfamily of co-polarised fibres by the class of Aeppli/De Rham-Gauduchon cohomology of Gauduchon metric given at the beginning on the central fibre. In the latter part, we prove that the $p$-SKT $h$-$\partial\bar{\partial}$-property is deformation open by constructing and studying a new notion called $hp$-Hermitian symplectic ($hp$-HS) form.
\end{abstract}

\noindent {\bf keywords. } $\partial\bar{\partial}$-manifold, deformations of complex structures, co-polarisation by Gauduchon class, primitive class, Weil-Petersson metric, $h$-$\partial\bar{\partial}$-manifold, $p$-SKT manifold. 
 
\section{Introduction}
Suppose that $X$ is a compact complex $n$-dimensional manifold. A Hermitian metric (i.e. a $C^\infty$ positive definite $(1,1)$-form) $\omega>0$ is said to be Gauduchon metric \cite{Gau77} if: 
$$\partial\bar{\partial} \omega^{n-1}=0. $$
It is well known that the Gauduchon metrics always exist \cite{Gau77} on a compact complex manifold.\\

Recall that a compact complex manifold $X$ of dimension $n$ is called $\partial\bar{\partial}$-manifold, if $X$ satisfies the $\partial\bar{\partial}$-lemma. As in \cite{Pop19}, we shall mean by a Calabi-Yau $\partial\bar{\partial}$-manifold $X$, a $\partial\bar{\partial}$-manifold $X$ where the canonical bundle $K_X$ is trivial. Equivalently, $X$ has a nowhere vanishing holomorphic $n$-forms. Let us consider that $(X,\omega)$ is a Gauduchon Calabi-Yau $\partial\bar{\partial}$-manifold.\\

Given a holomorphic family of compact complex manifold, i.e. the following map from a complex manifold to an open ball containing the origin in $\C^m$ for some $m\in \N^*$:
$$\pi:\mathcal{X}\longrightarrow \Delta $$
is a proper holomorphic submersion, then we have $X_t=\pi^{-1}(t)$ for $t\in \Delta\setminus \{0\}$.\\
Let $(X_t)_{t\in\Delta}$ be a deformation of the complex structure of $X=X_0$. By Wu's result \cite{Wu}, the $\partial\bar{\partial}$-property is deformation open. On the other hand, It is proved, in (\cite{B19}, Conclusion 4.4), that the notion of $p$-pluriclosed (or briefly $p$-SKT if there exists a strictly weakly positive $(p,p)$-form for $p\in\{0,\cdots,n\}$ that is $\partial\bar{\partial}$-closed) compact complex $\partial\bar{\partial}$-manifold is open under small deformations. In the case where $p=n-1$, the $p$-SKT property is nothing but Gauduchon metric, which means that the Gauduchon $\partial\bar{\partial}$-manifold is also open under holomorphic deformations. Therefore $X_t$ is a Gauduchon Calabi-Yau $\partial\bar{\partial}$-manifold for every $t\in\Delta$ close enough to $0$.\\

Remark that all balanced manifolds provide examples of Gauduchon manifolds. One can consider the example of (\cite{FOU}, Theorem 5.2) pointed out in \cite{Pop19} (that is the construction of a solvmanifold of real dimension 6 with a holomorphic family of complex structures $J_a$, $a\in\Delta=\{a\in\C \, /\, \mid a \mid <1 \}$ such that $(M,J_a)$ is a balanced Calabi-Yau $\partial\bar{\partial}$-manifold for any $a\neq 0$ but not of class $\mathcal{C}$, hence not K\"ahler) as an example of Gauduchon Calabi-Yau $\partial\bar{\partial}$-manifold that is not K\"ahler.\\

Notice that the Bogomolov-Tian-Todorov theorem shows that on a K\"ahler manifolds, the base space $\Delta$ of the Kuranishi family is isomorphic to an open subset of $H^{0,1}(X,T^{1,0}X)$ (see \cite{Tia87}) (i.e. the Kuranishi family is unobstructed). Later on, this theorem has been weakened, in \cite{Pop19}, by proving that it remains true on $\partial\bar{\partial}$-manifolds, so
$$T_0\Delta \simeq H^{0,1}(X,T^{1,0}X). $$

On a balanced Calabi-Yau $\partial\bar{\partial}$-manifold, the deformations of $X_0=X$ co-polarised by the balanced class $[\omega^{n-1}]_{\bar{\partial}}\in H^{n-1,n-1}_{\bar{\partial}}(X,\C)$ (by definition, if the De Rham class $\{\omega^{n-1}\}\in H^{2n-2}_{DR}(X,\C)$ is of type $(n-1,n-1)$ for the complex structure $J_t$ of $X_t$), in \cite{Pop19}, are parametrised by:
\begin{equation}\label{balanced}
H^{0,1}(X,T^{1,0}X)_{[\omega^{n-1}]}:=\{[v]\in H^{0,1}(X,T^{1,0}X) \, /\, [v\lrcorner \omega^{n-1}]=0\in H^{n-2,n}_{\bar{\partial}}(X,\C) \}.
\end{equation}

Now, let us consider that $X$ is a compact complex $\partial\bar{\partial}$-manifold of complex dimension $n$. It is proved in \cite{Pop15}, that every Aeppli cohomology class contains a $d$-closed representative and the following map called the {\it Hodge-Aeppli decomposition}:
\begin{equation}\label{H-A iso}
\begin{array}{ll}
&H^k_{DR}(X,\C)\overset{\simeq}{\longrightarrow} \displaystyle{\bigoplus_{p+q=k}} H^{p,q}_{A}(X,\C)\\
&\left\{\displaystyle\sum_{p+q=k} \alpha^{p,q}\right\}\longmapsto \displaystyle{\sum_{p+q=k}}[\alpha^{p,q}]
\end{array}
\end{equation}

\noindent is a canonical isomorphism for any $k=0,\cdots,2n$, where 
$$H^k_{DR}(X,\C)=\dfrac{\ker\{d: C^\infty_k(X,\mathbb{C})\longrightarrow C^\infty_{k+1}(X,\mathbb{C})\}}{Im\{d: C^\infty_{k-1}(X)\longrightarrow C^\infty_{k}(X) \}}.$$
is the {\it De Rham cohomology group} of degree $k$, while
\begin{equation}\label{Aeppli cohom}
H^{p,q}_{A}(X,\C)=\dfrac{\ker\{\partial\bar{\partial}: C^\infty_{p,q}(X,\mathbb{C})\longrightarrow C^\infty_{p+1,q+1}(X,\mathbb{C})\}}{Im\{\partial: C^\infty_{p-1,q}(X)\longrightarrow C^\infty_{p,q}(X) \}+ Im\{\bar{\partial}: C^\infty_{p,q-1}(X)\longrightarrow C^\infty_{p,q}(X)\}}
\end{equation}

\noindent is the {\it Aeppli cohomology group} of type $(p,q)$ with $p+q=k$. The following $4^{\mbox{\tiny{th}}}$ order Aeppli and Bott-Chern Laplacians (cf. \cite{Sch07})
$$\Delta_{A}=\partial\partial^*+\bar{\partial}\bar{\partial}^*+\bar{\partial}^*\partial^*\partial\bar{\partial}+\partial\bar{\partial}\bar{\partial}^*\partial^*+\partial\bar{\partial}^*\bar{\partial}\partial^*+\bar{\partial}\partial^*\partial\bar{\partial}^* : C^\infty_{p,q}(X,\C)\longrightarrow C^\infty_{p,q}(X,\C),$$
$$\Delta_{BC}=\partial^*\partial+\bar{\partial}^*\bar{\partial} +\partial\bar{\partial}\bar{\partial}^*\partial^*+\bar{\partial}^*\partial^*\partial\bar{\partial}+\bar{\partial}^*\partial\partial^*\bar{\partial}+\partial^*\bar{\partial}\bar{\partial}^*\partial : C^\infty_{p,q}(X,\C)\longrightarrow C^\infty_{p,q}(X,\C).$$
are elliptic and formally self-adjoint where $\partial^*$ (resp. $\bar{\partial^*}$) is the formal adjoint of $\partial$ (resp. $\bar{\partial}$) w.r.t. the $L^2$ scalar product defined by $\omega$, and we have:
\begin{equation}\label{(1)}
\ker\Delta_A=\ker\partial^*\cap\ker\bar{\partial}^*\cap\ker\partial\bar{\partial},
\end{equation}
\begin{equation}\label{(2)}
\ker\Delta_{BC}=\ker\partial\cap\ker\bar{\partial}\cap\ker(\partial\bar{\partial})^*.
\end{equation}
Moreover, we have the following orthogonal (w.r.t. the $L^2$ scalar product defined by $\omega$) three-space decomposition
\begin{equation}\label{(1)'}
C^\infty_{p,q}(X,\C)=\ker\Delta_A\bigoplus\, (\mbox{Im }\partial+\mbox{ Im } \bar{\partial})\,\bigoplus \mbox{ Im }(\partial\bar{\partial})^*,
\end{equation}
\begin{equation}\label{(2)'}
C^\infty_{p,q}(X,\C)=\ker\Delta_{BC}\,\bigoplus \mbox{ Im }(\partial\bar{\partial}) \, \bigoplus\, (\mbox{Im }\partial^*+\mbox{ Im } \bar{\partial}^*),
\end{equation} 
and the Hodge isomorphism
$$ H^{p,q}_A(X,\C)\simeq \ker \Delta_A \hspace{1cm} \mbox{ and } \hspace{1cm} H^{p,q}_{BC}(X,\C)\simeq \ker \Delta_{BC}.$$
Besides, $\omega$ induces the Hodge star isomorphism $ *_{\omega}=*:C^\infty_{p,q}(X,\C)\longrightarrow C^\infty_{n-q,n-p}(X,\C) $. It is well known that
\begin{equation}\label{(3)}
\partial^*=-*\bar{\partial}*, \hspace*{1.5ex} \bar{\partial}^*=-*\partial*, \hspace*{1.5ex} \mbox{ and }  \hspace*{1.5ex}   (\partial\bar{\partial})^*=-*\partial\bar{\partial}*.
\end{equation}
In consequence, $\ker\Delta_A$ and $\ker\Delta_{BC}$ are related as follows

$$
\begin{array}{ll}
\alpha\in\ker\Delta_A &\Longleftrightarrow \alpha\in\ker\partial\bar{\partial}, \alpha\in\ker\partial^* \mbox{ and } \alpha\in\ker\bar{\partial}^*\\
&\Longleftrightarrow *\alpha\in\ker(\partial\bar{\partial})^*, *\alpha\in\ker\partial \mbox{ and } *\alpha\in\ker\bar{\partial}
\end{array}
$$
\begin{equation}\label{(5)}
\Longleftrightarrow  *\alpha\in\ker\Delta_{BC}. \hspace{2.5cm}
\end{equation}

Taking our cue from the concept of balanced co-polarised deformations and following Popovici's method in \cite{Pop19}, we shall define the weakened notion of the balanced co-polarisation to Gauduchon co-polarisation and prove the following main result of Section \ref{copolar}:
\begin{Prop}
Let $X$ be a Gauduchon Calabi-Yau $\partial\bar{\partial}$-manifold with $\dim_{\C}X=n$ and let $\omega$ be an arbitrary Hermitian metric on $X$. For any Aeppli-Gauduchon class $[\omega^{n-1}]_A\in H^{n-1,n-1}_A(X,\C)$, let $\omega^{n-1}$  be the $\omega$-minimal $d$-closed representative of the class $[\omega^{n-1}]_A$. Then, the following vector subspace of $H^{0,1}(X,T^{1,0}X)$
$$H^{0,1}(X,T^{1,0}X)_{[\omega^{n-1}]_A}:=\{[v]\in H^{0,1}(X,T^{1,0}X)\, /\, [v\lrcorner \omega^{n-1}]_A=0\in H^{n-2,n}_A(X,\C) \}. $$
is well defined, i.e. the Aeppli class $[v\lrcorner \omega^{n-1}]_A\in H^{n-2,n}_A(X,\C)$ is independent of the choice of representative $v$ in the class $[v]\in H^{0,1}(X,T^{1,0}X)$. Moreover, we have
$$ T_0\Delta_{[\omega^{n-1}]_A}\simeq H^{0,1}(X,T^{1,0}X)_{[\omega^{n-1}]_A}. $$
\end{Prop}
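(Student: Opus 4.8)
The plan is to treat the two assertions separately. For the well-definedness, the only ambiguity in the representative of $[v]\in H^{0,1}(X,T^{1,0}X)$ is $v\mapsto v+\bar\partial\xi$ with $\xi\in C^\infty(X,T^{1,0}X)$, so I would show that $(\bar\partial\xi)\lrcorner\omega^{n-1}$ lies in $\mathrm{Im}\,\partial+\mathrm{Im}\,\bar\partial$, the denominator of the Aeppli group $H^{n-2,n}_A(X,\C)$ of \eqref{Aeppli cohom}. The mechanism is the Cartan-type commutation relation $(\bar\partial\xi)\lrcorner\omega^{n-1}=\bar\partial(\xi\lrcorner\omega^{n-1})+\xi\lrcorner(\bar\partial\omega^{n-1})$, valid for the $(1,0)$-vector field $\xi$. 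Since $\omega^{n-1}$ is chosen to be the $d$-closed representative of $[\omega^{n-1}]_A$ (available by the result of \cite{Pop15} that every Aeppli class carries a $d$-closed representative), we have $\bar\partial\omega^{n-1}=0$, whence $(\bar\partial\xi)\lrcorner\omega^{n-1}=\bar\partial(\xi\lrcorner\omega^{n-1})\in\mathrm{Im}\,\bar\partial$ and the Aeppli class is unchanged. Note that $v\lrcorner\omega^{n-1}$ is of bidegree $(n-2,n)$, hence automatically $\partial\bar\partial$-closed, so it genuinely defines an Aeppli class. I want to stress that it is $d$-closedness, not merely $\partial\bar\partial$-closedness, of the chosen representative that annihilates the extra term $\xi\lrcorner(\bar\partial\omega^{n-1})$; this is precisely the role of the $\omega$-minimal $d$-closed representative.

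For the tangent space identification, recall from \cite{Pop19} (Bogomolov-Tian-Todorov on $\partial\bar\partial$-manifolds) that $T_0\Delta\simeq H^{0,1}(X,T^{1,0}X)$. I would define co-polarisation of $X_t$ by $[\omega^{n-1}]_A$ to mean that the fixed De Rham class $\{\omega^{n-1}\}\in H^{2n-2}_{DR}(X,\C)$ is of pure type $(n-1,n-1)$ for $J_t$; through the Hodge-Aeppli isomorphism \eqref{H-A iso} applied to $J_t$, this amounts to the vanishing of its $J_t$-Aeppli components of the two adjacent types. Writing $\Omega:=\omega^{n-1}$, a degree count shows that the only bidegrees occurring in the $J_t$-decomposition of the $(2n-2)$-form $\Omega$ are $(n,n-2)$, $(n-1,n-1)$ and $(n-2,n)$, and since $\Omega$ is real and Aeppli cohomology is conjugation-compatible the $(n,n-2)$- and $(n-2,n)$-components are conjugate. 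Hence co-polarisation is equivalent to the single condition $[\Omega^{(n-2,n)}_{J_t}]_A=0$, where $\Omega^{(n-2,n)}_{J_t}$ denotes the $(n-2,n)_{J_t}$-component of $\Omega$. Consequently $T_0\Delta_{[\omega^{n-1}]_A}$ is the kernel of the differential at $0$ of $F\colon t\mapsto[\Omega^{(n-2,n)}_{J_t}]_A$, an assignment that is legitimate because the $\partial\bar\partial$-property is deformation-open (Wu) and so the Aeppli numbers, hence the target groups $H^{n-2,n}_{A,J_t}(X,\C)$, form a trivialisable bundle over $\Delta$.

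The crux is the computation of $dF_0$. Parametrising $J_t$ by its Beltrami form $\varphi(t)\in C^\infty_{0,1}(X,T^{1,0}X)$ with $\varphi(0)=0$ and $\dot\varphi(0)=v$, I would expand $\Omega$ in a $J_t$-coframe $\zeta^i=dz^i-\varphi^i_{\bar j}\,d\bar z^j$, using $dz^i=\zeta^i+\varphi^i_{\bar j}\,\bar\zeta^j+O(|\varphi|^2)$. To land in bidegree $(n-2,n)_{J_t}$ one must convert exactly one holomorphic leg of the $(n-1,n-1)_{J_0}$-form $\Omega$ into an antiholomorphic $J_t$-leg, and this conversion is precisely a contraction by $\varphi$; to first order this yields $\Omega^{(n-2,n)}_{J_t}=t\,(v\lrcorner\Omega)+O(t^2)$ up to an immaterial sign. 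Differentiating gives $dF_0(v)=[v\lrcorner\omega^{n-1}]_A$, so that $T_0\Delta_{[\omega^{n-1}]_A}=\{\,[v]\ :\ [v\lrcorner\omega^{n-1}]_A=0\,\}=H^{0,1}(X,T^{1,0}X)_{[\omega^{n-1}]_A}$, as claimed.

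I expect the main obstacle to be this first-order type computation together with the bookkeeping required to interpret $F$ and its differential across the varying complex structures: one must justify trivialising the bundle of Aeppli groups (resting on constancy of Aeppli numbers under the $\partial\bar\partial$-open deformation) and check that the leading term of $\Omega^{(n-2,n)}_{J_t}$ is intrinsically the class $[v\lrcorner\omega^{n-1}]_A$, independent of the trivialisation. The well-definedness established in the first step is exactly what makes this target class meaningful, so the two parts reinforce one another.
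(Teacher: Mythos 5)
Your proposal is correct and follows essentially the same route as the paper: the well-definedness argument is identical (the contraction identity $\bar{\partial}\zeta\lrcorner\omega^{n-1}=\bar{\partial}(\zeta\lrcorner\omega^{n-1})+\zeta\lrcorner\bar{\partial}\omega^{n-1}$ combined with the $d$-closedness of the $\omega$-minimal representative), and your tangent-space identification via the vanishing of the $(n-2,n)$-Aeppli component and its first-order derivative $[v\lrcorner\omega^{n-1}]_A$ is exactly the paper's Gauss--Manin argument, with reality of the De Rham class reducing the condition to a single component in both treatments. The only difference is cosmetic: you spell out the first-order Beltrami-coefficient computation that the paper leaves implicit by citing the Gauss--Manin connection on the Hodge bundle $t\mapsto H^{2n-2}_{DR}(X_t,\C)$, following \cite{Pop19}.
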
 
\vspace{2ex}

More precisely (this fact hinted at in the Introduction to \cite{Pop15}), suppose that $X$ is a Calabi-Yau $\partial\bar{\partial}$-manifold. Having fixed a Gauduchon metric $\omega$ on $X$, consider the small deformations $X_t$ that are co-polarised by the Aeppli class $[\omega^{n-1}]_{A}\in H^{n-1,n-1}_{A}(X,\C) $ (or by, thanks to the $\partial\bar{\partial}$-assumption on $X$, the De Rham class of degree $2n-2$ that is its canonical image in $H^{2n-2}_{DR}(X,\C)$). We say that a small deformation $X_t$ of $X$ is {\bf co-polarised by the Aeppli-Gauduchon class} (or {\bf De Rham-Gauduchon class} as called in \cite{Pop15}) $ [\omega^{n-1}]_{A}\in H^{n-1,n-1}_{A}(X,\C)$ if the De Rham class  $\{\omega^{n-1}\}\in H^{2n-2}_{DR}(X,\C)$ is of type $(n-1,n-1)$ for the complex structure of $X_t$.\\
It is clear that, as in \cite{Pop15}, if the manifold $X$ satisfies the $\partial\bar{\partial}$-property, then (for any $p,q=0,\cdots,n$) the Aeppli cohomology group $H^{p,q}_{A}(X,\C)$ is injecting canonically into the De Rham cohomology $H^{p+q}_{DR}(X,\C)$. Moreover, since the De Rham cohomology does not depend on the complex structure, while the Aeppli cohomology groups of the fibres $X_t$ depend on $t\in \Delta$ and the $\partial\bar{\partial}$-property is deformation-open, then one obtains the Hodge-Aeppli decomposition (\ref{H-A iso}) of degree $2n-2$ of $X_t$:
\begin{equation}\label{Hodge-Aeppli decomposition}
H^{2n-2}_{DR}(X,\C)\simeq H^{n,n-2}_{A}(X_t,\C)\bigoplus H^{n-1,n-1}_{A}(X_t,\C)\bigoplus H^{n-2,n}_{A}(X_t,\C)
\end{equation}
and the Hodge symmetry $ H^{n-2,n}_{A}(X_t,\C)\simeq\overline{ H^{n,n-2}_{A}(X_t,\C)}$ for $t$ in a possibly shrunk $\Delta$.\\
It is worth mentioning that the initial De Rham class that is in $H^{2n-2}_{DR}(X,\C)$ is of type $(n-1,n-1)$ for the complex structure $J_t$ of $X_t$, by definition, if and only if it is in $H^{n-1,n-1}_{A}(X,\C)$. In other words, if and only if the projections of the class $\{\omega^{n-1,n-1}\}\in H^{2n-2}_{DR}(X,\C) $ on  $H^{n,n-2}_{A}(X,\C)$ and  $ H^{n-2,n}_{A}(X,\C)$ defined by the Hodge-Aeppli decomposition vanish. Furthermore, the fact that the De Rham class being real is equivalent to only one of these projections, for example that on $ H^{n-2,n}_{A}(X,\C)$, vanishes. \\

Recall that a $(p,q)$-form $\alpha$ on $X$ is said to be primitive for $\omega$ if it satisfies the condition $\omega^{n-k+1}\wedge \alpha=0$ with $k=p+q\leq n$. Let us denote the set of primitive $(p,q)$-forms by $C^\infty_{p,q}(X,\C)_{prim}$. \\

\noindent Suppose that $\alpha \in C^\infty_{p,q}(X,\C)_{prim}$ is a primitive $(p,q)$-form, by (\cite{Voi02}, Proposition 6.29, p. 150), we have:
\begin{equation}\label{prim form}
*\alpha=(-1)^{(p+q)(p+q+1)/2}\,\, i^{p-q}\,\, \dfrac{\omega^{n-p-q}\wedge\alpha}{(n-p-q)!}.
\end{equation}

\noindent For $p=n-1$ and $q=1$, the above formula yields
\begin{equation}\label{(4)}
*\alpha=i^{n^2+2n-2}\,\,\alpha.
\end{equation}

Given a compact Gauduchon Calabi-Yau $\partial\bar{\partial}$-manifold, we define in Subsection \ref{sect prim} (cf. Definition \ref{def Gprim}) {\bf the space of primitive $(n-1,1)$-class} as:
$$ [v\lrcorner u]\in H^{n-1,1}_{Gprim}(X,\C) \hspace*{1.5ex} \Longleftrightarrow \hspace*{1.5ex} [v\lrcorner \omega^{n-1}]_A=0\in H^{n-2,n}_A(X,\C). $$
for any $[v]\in H^{0,1}(X,T^{1,0}X)$, where $u\neq 0$ is a holomorphic $(n,0)$-form and $\omega^{n-1}$  is the $\omega$-minimal $d$-closed representative (see Definition \ref{refinement}) of the class $[\omega^{n-1}]_A\in H^{n-1,n-1}_A(X,\C)$. Thus, we have

$$ T_0\Delta_{[\omega^{n-1}]_A}\simeq H^{0,1}(X,T^{1,0}X)_{[\omega^{n-1}]_A}\simeq H^{n-1,1}_{Gprim}(X,\C).$$

If $\omega$ is balanced, then the space of primitive $(n-1,1)$-class $H^{n-1,1}_{Gprim}(X,\C)$ coincides with the space of primitive $(n-1,1)$-class  $H^{n-1,1}_{prim}(X,\C)$ in the balanced case. In the rest of this Subsection, we will discuss the existence of a primitive $d$-closed representative of a primitive $(n-1,1)$-class. If such representative exists, it will play an important role in the discussion of Weil-Petersson metrics in Section \ref{section prim+WP} (cf. Subsection \ref{WP metrics}).

 Afterward, in Subsection \ref{WP metrics}, we define the Weil-Petersson metrics on $\Delta_{[\omega^{n-1}]_A}$ and compare the smooth positive definite $(1,1)$-forms $\tilde{\omega}^{(2)}_{WP}$ associated with the Weil-Petersson metric $g^{(2)}_{WP}$ (cf. Definition \ref{def of WP metrics}) on $\Delta_{[\omega^{n-1}]_A}$ with the period map metric $\gamma$ defined and discussed in \cite{Pop19}.\\

In the last section, we will establish the interaction between various kind of forms, especially the $p$-SKT on a compact complex  $h$-$\partial\bar{\partial}$-manifold of complex dimension $n$.\\
Let $X$ be a compact complex $n$-dimensional manifold. Notice that, in \cite{Pop17}, the adiabatic limit construction of the differential operator $d_h=h\partial+\bar{\partial}$ has been introduced for every constant $h>0$ with the $d$-cohomology and $d_h$-cohomology are related by the following isomorphism

\begin{equation}\label{DRdh}
\begin{array}{ll}
& H^{k}_{DR}(X,\C) \longrightarrow H^{k}_{d_h}(X,\C)\\
&\hspace{1cm} \{\alpha\}_d \longmapsto \{\theta_h\alpha\}_{d_h}
\end{array}
\end{equation} 

\noindent where $\theta_h$ is defined by the pointwise isomorphism
$$
\begin{array}{ll}
& \Lambda^{p,q} T^*X \longrightarrow \Lambda^{p,q} T^*X\\
&\hspace{1.2cm} \alpha \longmapsto \theta_h\alpha=h^p\alpha,
\end{array}
$$
and the $d_h$-cohomology is defined by:
 $$H^{k}_{d_h}(X,\C)=\ker\{d_h:C^\infty_{k}(X,\C)\longrightarrow C^\infty_{k+1}(X,\C)\}\diagup\mbox{ Im }\{d_h:C^\infty_{k-1}(X,\C)\longrightarrow C^\infty_{k}(X,\C)\}.$$\\
 On the other hand, we shall mean by $h$-$\partial\bar{\partial}$-manifold for any $h\in\R\setminus\{0\}$, a manifold that satisfies the $h$-$\partial\bar{\partial}$-lemma (see definition \ref{recall defn}) as defined in \cite{BP18}. As mentioned above, it is already proved in \cite{B19} that the $p$-SKT property is open under holomorphic deformations on a $\partial\bar{\partial}$-manifold. The purpose of this section is the following
\begin{The}
The $p$-SKT $h$-$\partial\bar{\partial}$-property is open under holomorphic deformations of the complex structure.
\end{The}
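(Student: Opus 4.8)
The plan is to realise the deformation-openness of the $p$-SKT property as the openness of a \emph{cohomological} positivity condition, with the $hp$-Hermitian symplectic ($hp$-HS) form playing the role of a $d_h$-closed lift of a $p$-SKT form. First I would call an $hp$-HS form any $2p$-form $\tilde{\Omega}=\Omega^{p-1,p+1}+\Omega^{p,p}+\Omega^{p+1,p-1}$ whose $(p,p)$-component $\Omega^{p,p}$ is a real strictly weakly positive $(p,p)$-form and which satisfies $d_h\tilde{\Omega}=0$, where $d_h=h\partial+\bar{\partial}$. The easy implication comes first: expanding $d_h\tilde{\Omega}=0$ in bidegrees, its $(p,p+1)$-component reads $\bar{\partial}\Omega^{p,p}+h\,\partial\Omega^{p-1,p+1}=0$, whence $\partial\bar{\partial}\Omega^{p,p}=-h\,\partial\partial\Omega^{p-1,p+1}=0$; thus the $(p,p)$-part of any $hp$-HS form is a strictly weakly positive $\partial\bar{\partial}$-closed form, so its manifold is $p$-SKT. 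Note this direction uses neither the $h$-$\partial\bar{\partial}$-lemma nor any reality of $\tilde{\Omega}$ itself.

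For the converse, starting from a $p$-SKT form $\Omega^{p,p}$ on the $h$-$\partial\bar{\partial}$-manifold $X_0$, I would produce an $hp$-HS form by solving, for the two off-diagonal components, the pair $h\,\partial\Omega^{p-1,p+1}=-\bar{\partial}\Omega^{p,p}$, $\bar{\partial}\Omega^{p-1,p+1}=0$, together with the conjugate-type pair $\bar{\partial}\Omega^{p+1,p-1}=-h\,\partial\Omega^{p,p}$, $\partial\Omega^{p+1,p-1}=0$. Since $\partial\bar{\partial}\Omega^{p,p}=0$, the forms $\bar{\partial}\Omega^{p,p}$ and $\partial\Omega^{p,p}$ are $d$-closed and respectively $\bar{\partial}$-exact and $\partial$-exact, so the $h$-$\partial\bar{\partial}$-lemma (Definition \ref{recall defn}, \cite{BP18}) supplies potentials $\eta,\zeta$ with $\bar{\partial}\Omega^{p,p}=\partial\bar{\partial}\eta$ and $\partial\Omega^{p,p}=\partial\bar{\partial}\zeta$; then $\Omega^{p-1,p+1}:=-\tfrac{1}{h}\bar{\partial}\eta$ and $\Omega^{p+1,p-1}:=h\,\partial\zeta$ solve both pairs and give $d_h\tilde{\Omega}=0$. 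This lifting is the step I expect to be the main obstacle: one must verify that the $h$-$\partial\bar{\partial}$-lemma as formulated for $d_h$ really delivers the required $\partial\bar{\partial}$-potentials, and that the $h$-normalisation and signs do not spoil the strict positivity of $\Omega^{p,p}$. Granting it, I obtain the characterisation: a compact $h$-$\partial\bar{\partial}$-manifold is $p$-SKT if and only if it carries an $hp$-HS form.

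With this characterisation the deformation argument is the standard transport of a fixed de Rham class. Using the intertwining $\theta_h\,d=d_h\,\theta_h$ that underlies the isomorphism (\ref{DRdh}), the form $\hat{\Omega}_0:=\theta_h^{-1}\tilde{\Omega}_0$ is genuinely $d$-closed on $X_0$, with $(p,p)$-part $h^{-p}\Omega^{p,p}$ still strictly weakly positive (after replacing $\hat{\Omega}_0$ by $-\hat{\Omega}_0$ when $h<0$ and $p$ is odd). Its de Rham class $\{\hat{\Omega}_0\}\in H^{2p}_{DR}(X,\C)$ is independent of the complex structure, so I keep the very same form $\hat{\Omega}_0$ on the nearby fibres and only re-decompose it into bidegrees for the deformed structure $J_t$. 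The projection $(\hat{\Omega}_0)^{p,p}_{J_t}$ depends continuously on $t$ and equals $h^{-p}\Omega^{p,p}$ at $t=0$, so, strict weak positivity being an open cone condition, it stays strictly weakly positive for small $t$. Finally $d\hat{\Omega}_0=0$ forces, through its $(p,p+1)_{J_t}$-component $\partial_t(\hat{\Omega}_0)^{p-1,p+1}_{J_t}+\bar{\partial}_t(\hat{\Omega}_0)^{p,p}_{J_t}=0$, the relation $\partial_t\bar{\partial}_t(\hat{\Omega}_0)^{p,p}_{J_t}=0$; hence $(\hat{\Omega}_0)^{p,p}_{J_t}$ is a strictly weakly positive $\partial_t\bar{\partial}_t$-closed $(p,p)$-form, and $X_t$ is $p$-SKT.

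It then only remains to see that $X_t$ is again an $h$-$\partial\bar{\partial}$-manifold for $t$ small, i.e. that the $h$-$\partial\bar{\partial}$-property is itself deformation-open; I would obtain this from Wu's theorem \cite{Wu} via the $\theta_h$-correspondence of (\ref{DRdh}), exactly as the $\partial\bar{\partial}$-property is open, or else cite it directly from \cite{BP18}. Combining the two facts, every sufficiently small deformation $X_t$ is again a $p$-SKT $h$-$\partial\bar{\partial}$-manifold, which is the assertion. The points that demand care are thus concentrated in the lifting of the second paragraph (the precise invocation of the $h$-$\partial\bar{\partial}$-lemma and the $h$-bookkeeping) and in checking that $t\mapsto(\hat{\Omega}_0)^{p,p}_{J_t}$ is manifestly smooth, so that openness of the positivity cone applies uniformly near $t=0$.
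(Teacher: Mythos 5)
Your architecture is at bottom the same as the paper's (its Theorems \ref{hp-HS p-SKT} and \ref{hpHS deform}): characterise the $p$-SKT property on an $h$-$\partial\bar{\partial}$-manifold by the existence of a $d_h$-closed $2p$-form lifting the $p$-SKT form, then transport the associated $d$-closed form across the family and re-decompose under $J_t$. But your mechanics differ genuinely: where the paper works cohomologically, via the decomposition (\ref{h-Aeppli decomp}), the surjection $H^{2p}_{h,A}(X,\C)\rightarrow H^{p,p}_A(X,\C)$ and Propositions \ref{dhDR} and \ref{h-Adh}, you build the off-diagonal components by explicit potentials, and your transport step is leaner: you read the $\partial_t\bar{\partial}_t$-closedness of the $J_t$-$(p,p)$-component directly off $d\hat{\Omega}_0=0$ and quote \cite{BP18} for the openness of the $h$-$\partial\bar{\partial}$-property (citing \cite{BP18} is indeed the right move; the $\theta_h$-route via Wu is not immediate, since $\theta_h$ untwists $d_h$ but not $d_{-h^{-1}}$ simultaneously). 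Your easy direction by pure bidegree bookkeeping is also cleaner than the paper's detour through the $d$-closed representative and the $p$-HS notion, and holds on any compact complex manifold.

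Two points in your second paragraph are, as written, genuine gaps. First, the invocation of the $h$-$\partial\bar{\partial}$-lemma: $\bar{\partial}\Omega^{p,p}$ does lie in $\ker d_h\cap\ker d_{-h^{-1}}$, but it is only $\bar{\partial}$-exact, and $\bar{\partial}$-exactness is not among the four exactness hypotheses of Definition \ref{recall defn}(5), so the lemma as stated does not deliver $\bar{\partial}\Omega^{p,p}\in\mathrm{Im}\,(\partial\bar{\partial})$. The repair is to apply the lemma to the mixed-type form $d_h\Omega^{p,p}$ instead: since $d_{-h^{-1}}d_h=-\bigl(h+\frac{1}{h}\bigr)\partial\bar{\partial}$, one has $d_h\Omega^{p,p}\in\ker d_h\cap\ker d_{-h^{-1}}\cap\mathrm{Im}\,d_h$, hence $d_h\Omega^{p,p}=\bigl(h+\frac{1}{h}\bigr)\partial\bar{\partial}w$; splitting $w$ into its $(p,p-1)$- and $(p-1,p)$-components recovers exactly your $\zeta$ and $\eta$. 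Second, reality: your three-component lift cannot in general be made real when $h\neq\pm 1$ — the computation in the paper's Observation following Definition \ref{hpHS defn} shows, for $p=n-1$, that a real three-component $d_h$-closed lift forces $d\omega^{n-1}=0$ — so your $\tilde{\Omega}$, hence $\hat{\Omega}_0$, is genuinely complex, and ``strictly weakly positive'' is not meaningful for the complex form $(\hat{\Omega}_0)^{p,p}_{J_t}$. (This also means your lift is weaker than the paper's $hp$-HS form of Definition \ref{hpHS defn}, which requires the conjugate-symmetric shape.) The fix is to replace $\hat{\Omega}_0$ by $\mathrm{Re}\,\hat{\Omega}_0$, which is still $d$-closed, has the same real positive $(p,p)$-part at $t=0$, and satisfies $(\mathrm{Re}\,\hat{\Omega}_0)^{p,p}_{J_t}=\mathrm{Re}\bigl((\hat{\Omega}_0)^{p,p}_{J_t}\bigr)$, so the open-cone argument (\cite{B19}, Lemma 4.2) applies verbatim. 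With these two patches your proof is complete, and in one respect tighter than the paper's: the corrections the paper obtains through Proposition \ref{h-Adh} alter the $(p,p)$-component by Aeppli-exact terms (a point the paper glosses over), whereas your potentials sit purely in bidegrees $(p\pm 1,p\mp 1)$ and keep the $(p,p)$-component exactly equal to $\Omega^{p,p}$.
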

To prove this result, we will construct a new form $\Omega$ that will be called {\bf $hp$-Hermitian symplectic ($hp$-HS)}  if $\Omega$ is the $(p,p)$-type component of a real $d_h$-closed $(2p)$-form (see definition \ref{hpHS defn}). We observe in theorem \ref{hp-HS p-SKT} that the properties $hp$-HS and $p$-SKT are equivalent on any compact complex $h$-$\partial\bar{\partial}$-manifold. In the end, proving the openness under holomorphic deformations of the $hp$-HS $h$-$\partial\bar{\partial}$ property allows us to prove that $p$-SKT $h$-$\partial\bar{\partial}$-property is deformation-open.
\section{Holomorphic deformations of the co-polarised Gauduchon Calabi-Yau $\partial\bar{\partial}$-manifolds}\label{copolar}

Suppose that $X$ is a compact Gauduchon Calabi-Yau $\partial\bar{\partial}$-manifold of complex dimension $n$. Let $\pi: \mathcal{X}\longrightarrow \Delta$ be a proper holomorphic submersion between complex manifolds. The fibres $X_t:=\pi^{-1}(t)$ for $t\in\Delta$ are compact complex manifolds of the same dimension $n$ and are $C^\infty$ diffeomorphic to a fixed $C^\infty$ manifold $X$. Since $X$ is a Calabi-Yau $\partial\bar{\partial}$-manifold, the base space of the Kuranishi family is isomorphic to an open subset of $H^{0,1}(X,T^{1,0}X)$  (cf. \cite{Pop19}, Theorem 1.2).\\

Besides Wu's openness under holomorphic deformations of the $\partial\bar{\partial}$-property, the Gauduchon $\partial\bar{\partial}$-manifold is also deformation-open as mentioned in the Introduction. Moreover, from the deformation openness of the triviality of the canonical bundle $K_{X_t}$ of $X_t$ under the $\partial\bar{\partial}$-assumption, one can conclude that small deformations of Gauduchon Calabi-Yau  $\partial\bar{\partial}$-manifolds are again Gauduchon Calabi-Yau  $\partial\bar{\partial}$-manifolds.\\

Recall that the concept of local deformations $X_t$ of $X$ that are co-polarised by the balanced class $[\omega^{n-1}]\in H^{n-1,n-1}_{\bar{\partial}}(X,\C)\subset H^{2n-2}_{DR}(X,\C)$
 has been introduced, in \cite{Pop19}, by requiring that the De Rham cohomology class $\{\omega^{n-1}\}\in H^{2n-2}_{DR}(X,\C)$ be of type $(n-1,n-1)$ for the complex structure $J_t$ of $X_t$ and are parametrised by (\ref{balanced}). Following the pattern of the deformations of balanced Calabi-Yau $\partial\bar{\partial}$-manifold, we shall propose the following weakened notion:

\begin{Def}
Fixing an Aeppli-Gauduchon class 
$$[\omega^{n-1}]_A\in H^{n-1,n-1}_A(X,\C)\subset H^{2n-2}_{DR}(X,\C), $$
the fibre $X_t$ is said to be {\bf{co-polarised by}} $[\omega^{n-1}]_A$ if the De Rham class $\{\omega^{n-1}\}\in H^{2n-2}_{DR}(X,\C)$ is of type $(n-1,n-1)$ for the complex structure of $X_t$.
\end{Def}

Denote by $\Delta_{[\omega^{n-1}]_A}\subset\Delta$ {\bf the open subset of local deformations of $X$ co-polarised by the Aeppli-Gauduchon class} $[\omega^{n-1}]_A\in H^{n-1,n-1}_A(X,\C)$ and 
$$\pi:\mathcal{X}_{[\omega^{n-1}]_A}\longrightarrow \Delta_{[\omega^{n-1}]_A} $$
is {\bf the local universal family of co-polarised deformations of} $X$.\\

Since the fibre $X_t$ is $\partial\bar{\partial}$-manifold for any $t\in \Delta$ close to 0, the Hodge-Aeppli decomposition (\ref{Hodge-Aeppli decomposition}) holds on $X_t$. Let $\{\omega^{n-1}\}_{DR}\in H^{2n-2}_{DR}(X,\C)$ be the co-polarising De Rham cohomology class, then the splitting of $\omega^{n-1}$ yields
$$\omega^{n-1}=\alpha^{n-2,n}_t+\alpha^{n-1,n-1}_t+\alpha^{n,n-2}_t. $$
Notice that the family $(\alpha^{n-1,n-1}_t)_{t\in\Delta}$ vary smoothly with $t$ and $\alpha^{n-1,n-1}_0=\omega^{n-1}>0$, so $\alpha^{n-1,n-1}_t$ is a positive definite real $(n-1,n-1)$-form on $X_t$ for any $t$ close sufficiently to $0$. Thus, by \cite{Mic82}, there exists a unique smooth $(1,1)$-form $\tilde{\omega}_t>0$ on $X_t$ such that $\alpha^{n-1,n-1}_t=\tilde{\omega}^{n-1}_t$.\\
 The $(2n-2)$-form $\omega^{n-1}$ being $d$-closed implies that $\tilde{\omega}^{n-1}$ is $\partial_t\bar{\partial}_t$-closed and that $[\tilde{\omega}^{n-1}_t]_A\in H^{n-1,n-1}_A(X_t,\C)$ with 
 $$
 \begin{array}{ll}
 Pr: & H^{2n-2}_{DR}(X,\C) \longrightarrow H^{n-1,n-1}_A(X_t,\C)\\
 &\hspace{0.5cm}\{\omega^{n-1}\}_{DR}\longmapsto [\tilde{\omega}^{n-1}_t]_A
 \end{array}
 $$
is the projection defined by the Hodge-Aeppli decomposition (\ref{Hodge-Aeppli decomposition}). Furthermore, if we assume that the class $\{\omega^{n-1}\}_{DR}\in H^{2n-2}_{DR}(X,\C) $ is of type $(n-1,n-1)$ for the complex structure of $X_t$, then  $\{\omega^{n-1}\}_{DR}=\{\tilde{\omega}_t^{n-1}\}_{DR}$ for every $t\in\Delta$ close enough to $0$. Therefore, for every $t\in \Delta$ close to $0$, the class $\{\omega^{n-1}\}_{DR}$ contains the Gauduchon metric $\tilde{\omega}_t^{n-1}$ for the complex structure $J_t$ of $X_t$.\\
 
 The case where $\partial\phi$ (of the following lemma) is replaced by $\omega^{n-1}$ has been already proved in (\cite{Pop19}, Lemma 4.3) and it is mentioned that this result still valid for forms of any type. For the reader's convenience, we will give, explicitly, the proof of the following important tool to prove the Proposition \ref{Prop}.
\begin{Lem}\label{lem}
Suppose that $X$ is a compact complex $n$-dimensional manifold. Then for any $(p,q)$-form $\phi\in C^\infty_{p,q}(X,\C)$ with $p,q=0,\cdots,n$, we have:
\begin{itemize}
\item[(a)] $\bar{\partial}(\zeta\lrcorner \partial \phi)=\bar{\partial}\zeta\lrcorner \phi-\zeta\lrcorner\bar{\partial}\partial \phi$, \hspace{2ex} for every $\zeta\in C^\infty(X,T^{1,0}X)$;
\item[(b)] $\bar{\partial}(v\lrcorner \partial \phi)=\bar{\partial}v\lrcorner \phi+v\lrcorner\bar{\partial}\partial \phi$, \hspace{2ex} for every $v\in C^\infty_{0,1}(X,T^{1,0}X)$.
\end{itemize}
\end{Lem}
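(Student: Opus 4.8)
The plan is to deduce both identities from a single pair of graded Leibniz (Cartan-type) formulas for $\bar\partial$ with respect to interior multiplication by a $T^{1,0}X$-valued form, and only then to specialise. Concretely, I would first prove that for an \emph{arbitrary} form $\psi\in C^\infty_{r,s}(X,\C)$ one has
\begin{equation*}
\bar\partial(\zeta\lrcorner\psi)=\bar\partial\zeta\lrcorner\psi-\zeta\lrcorner\bar\partial\psi \qquad\text{for } \zeta\in C^\infty(X,T^{1,0}X),
\end{equation*}
\begin{equation*}
\bar\partial(v\lrcorner\psi)=\bar\partial v\lrcorner\psi+v\lrcorner\bar\partial\psi \qquad\text{for } v\in C^\infty_{0,1}(X,T^{1,0}X),
\end{equation*}
where $\bar\partial\zeta\in C^\infty_{0,1}(X,T^{1,0}X)$ and $\bar\partial v\in C^\infty_{0,2}(X,T^{1,0}X)$ are obtained by applying the Dolbeault operator of the holomorphic bundle $T^{1,0}X$ to the coefficients. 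Once these master identities are in hand, statements (a) and (b) follow at once by taking $\psi=\partial\phi$, since then $\bar\partial\psi=\bar\partial\partial\phi$, with no further manipulation required.

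For the two master identities, the approach is a purely local verification. Because $\bar\partial$, the contraction $\lrcorner$, and both right-hand sides are local operators and are $\C$-linear in $\psi$, it suffices to work in a holomorphic coordinate chart and to check the formulas on monomials $\psi=f\,dz_I\wedge d\bar z_J$. One then expands the left-hand side using the Leibniz rule for $\bar\partial$ together with $\bar\partial(dz_k)=\bar\partial(d\bar z_l)=0$, and evaluates the contractions from the antiderivation rule $\xi\lrcorner(\alpha\wedge\beta)=(\xi\lrcorner\alpha)\wedge\beta+(-1)^{\deg\alpha}\alpha\wedge(\xi\lrcorner\beta)$ with $\partial_{z_l}\lrcorner dz_k=\delta_{lk}$ and $\partial_{z_l}\lrcorner d\bar z_t=0$. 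Writing the identities as $\bar\partial(\zeta\lrcorner\psi)+\zeta\lrcorner\bar\partial\psi=\bar\partial\zeta\lrcorner\psi$ and $\bar\partial(v\lrcorner\psi)-v\lrcorner\bar\partial\psi=\bar\partial v\lrcorner\psi$, the terms in which $\bar\partial$ differentiates the coefficient $f$ cancel between the two summands on the left, and the surviving terms, in which $\bar\partial$ hits the coefficients of $\zeta$ (resp. $v$), reassemble exactly into $\bar\partial\zeta\lrcorner\psi$ (resp. $\bar\partial v\lrcorner\psi$).

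The conceptual point organising the computation, and the reason for the sign difference between (a) and (b), is parity. The contraction $\iota_\zeta$ by a vector field is an \emph{odd} operator, so it graded-anticommutes with the odd operator $\bar\partial$, producing $\{\bar\partial,\iota_\zeta\}=\iota_{\bar\partial\zeta}$ and hence the minus sign; the contraction $\iota_v$ carries an extra antiholomorphic one-form factor, making it \emph{even}, so it graded-commutes with $\bar\partial$, producing $[\bar\partial,\iota_v]=\iota_{\bar\partial v}$ and hence the plus sign. The only genuine obstacle is the sign bookkeeping: one must fix a convention for how the $(0,1)$-factor of $v$ (and of $\bar\partial\zeta$, $\bar\partial v$) is wedged relative to the contracted slot, and then carefully track the signs generated when that antiholomorphic factor is commuted past the $(1,0)$-covectors during contraction and when $\bar\partial$ acts across a product. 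Once these conventions are fixed consistently, the monomial check is routine, and the substitution $\psi=\partial\phi$ completes the proof of both (a) and (b).
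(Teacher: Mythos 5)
Your proposal is correct, and it in fact proves the identities in the form the paper actually uses: note that the lemma as printed contains a bidegree typo --- the first terms on the right-hand sides must read $\bar{\partial}\zeta\lrcorner\partial\phi$ and $\bar{\partial}v\lrcorner\partial\phi$ rather than $\bar{\partial}\zeta\lrcorner\phi$ and $\bar{\partial}v\lrcorner\phi$ (otherwise the types do not match), as the paper's own computations (\ref{1})--(\ref{3'}) confirm; your specialisation $\psi=\partial\phi$ lands exactly on this corrected statement, so there is no gap. Where you differ from the paper is organisation rather than substance. The paper verifies the identities directly for $\psi=\partial\phi$: it expands $\phi=\sum\phi_{IJ}\,dz_I\wedge d\overline{z}_J$, drags the coefficients $\partial\phi_{IJ}/\partial z_k$ (hence the second derivatives $\partial^2\phi_{IJ}/\partial\overline{z}_r\partial z_k$) through all three terms, and matches everything by means of the explicit contraction formula (\ref{contrac}). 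You instead establish the master Leibniz identities $\bar{\partial}(\zeta\lrcorner\psi)=\bar{\partial}\zeta\lrcorner\psi-\zeta\lrcorner\bar{\partial}\psi$ and $\bar{\partial}(v\lrcorner\psi)=\bar{\partial}v\lrcorner\psi+v\lrcorner\bar{\partial}\psi$ for an arbitrary $(r,s)$-form $\psi$, checked on monomials, where the derivative-of-coefficient terms cancel and no second derivatives ever appear, and then substitute $\psi=\partial\phi$. This is precisely the route of (\cite{Pop19}, Lemma 4.3), which the paper itself cites as valid ``for forms of any type'' before opting to redo the special case by hand; your version buys the reusable graded-commutator statements $\{\bar{\partial},\iota_\zeta\}=\iota_{\bar{\partial}\zeta}$ and $[\bar{\partial},\iota_v]=\iota_{\bar{\partial}v}$, and your parity explanation of the sign discrepancy between (a) and (b) is exactly the right conceptual account of it, while the paper's brute-force computation is self-contained at the price of heavier index bookkeeping. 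Two small points to make explicit when writing your monomial check: fix once and for all that the antiholomorphic factor of $v$ (and of $\bar{\partial}\zeta$, $\bar{\partial}v$) is wedged in front of the contracted form --- the convention the paper adopts implicitly by writing $v=\sum_l v_l\,d\overline{z}_l\wedge\partial/\partial z_l$ --- and observe that this ``diagonal'' expression for $v$ is itself an abuse (a general $v$ has components $v_{lm}\,d\overline{z}_l\otimes\partial/\partial z_m$), which both your proof and the paper's handle correctly only because everything is $\C$-linear over simple tensors; stating that linearity reduction explicitly would make your argument airtight.
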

\begin{proof}
To prove the pointwise indentities above, we fix an arbitrary point $x\in X$ and choose local holomorphic coordinates $z_1,\cdots,z_n$ about $x$ such that
$$\phi=\displaystyle{\sum_{ \overset{\mid I\mid =p}{\mid J\mid =q}}} \,\phi_{IJ}\, dz_I\wedge d\overline{z}_J.   $$
\begin{itemize}
\item[(a)] Let us start by computing $\zeta\lrcorner \partial \phi$. Notice that
$$\partial \phi=\displaystyle{\sum_{ \overset{\mid I\mid =p}{\mid J\mid =q}}} \displaystyle{\sum_{k=1}^n}\,\dfrac{\partial \phi_{IJ}}{\partial z_k} \, dz_k\wedge dz_I\wedge d\overline{z}_J \hspace*{4ex}  \mbox{ and } \hspace*{4ex}  \zeta=\displaystyle{\sum_{j=1}^n} \,\zeta_j\, \dfrac{\partial}{\partial z_j}$$
$$
\begin{array}{ll}
\zeta\lrcorner \partial \phi &= \displaystyle{\sum_{j=1}^n} \,\zeta_j\, \dfrac{\partial}{\partial z_j} \lrcorner \displaystyle{\sum_{ I,J}} \displaystyle{\sum_{k=1}^n}\,\dfrac{\partial \phi_{IJ}}{\partial z_k} \, dz_k\wedge dz_I\wedge d\overline{z}_J \\
&=\displaystyle{\sum_{I,J}} \displaystyle{\sum_{j,k}}\,\zeta_j \dfrac{\partial \phi_{IJ}}{\partial z_k}  \dfrac{\partial}{\partial z_j} \lrcorner(dz_k\wedge dz_I\wedge d\overline{z}_J ).
\end{array}
$$
Putting $I=(i_1,\cdots,i_p)$, a direct calculation of $ \dfrac{\partial}{\partial z_j} \lrcorner(dz_k\wedge dz_I\wedge d\overline{z}_J )$ yields
\begin{equation}\label{contrac}
 \dfrac{\partial}{\partial z_j} \lrcorner(dz_k\wedge dz_I\wedge d\overline{z}_J )=\delta_{jk}\,dz_I\wedge d\overline{z}_J-\sum_l \delta_{jl} (-1)^{l-1} dz_k\wedge dz_{I\setminus{\{j=i_l\}}}\wedge d\overline{z}_J
\end{equation}
where $dz_{I\setminus{\{i_l\}}}=dz_{i_1}\wedge\cdots\wedge \widehat{dz_{i_l}}\wedge\cdots\wedge dz_p$ while the symbol $\widehat{dz_{i_l}}$ means that the term $dz_{i_l}$ is omitted. Thus, we obtain
$$
\begin{array}{ll}
\zeta\lrcorner \partial \phi &= \displaystyle{\sum_{I,J}} \displaystyle{\sum_{k}}\,\zeta_k \dfrac{\partial \phi_{IJ}}{\partial z_k}\, dz_I\wedge d\overline{z}_J -\displaystyle{\sum_{I,J}} \displaystyle{\sum_{j,k,l}}\delta_{jl} (-1)^{l-1}\zeta_j \dfrac{\partial \phi_{IJ}}{\partial z_k}  dz_k\wedge dz_{I\setminus{\{j=i_l\}}}\wedge d\overline{z}_J\\
&=\displaystyle{\sum_{I,J}} \displaystyle{\sum_{k}}\,\zeta_k \dfrac{\partial \phi_{IJ}}{\partial z_k}\, dz_I\wedge d\overline{z}_J -\displaystyle{\sum_{I,J}} \displaystyle{\sum_{j,k}} (-1)^{j-1}\zeta_j \dfrac{\partial \phi_{IJ}}{\partial z_k}  dz_k\wedge dz_{I\setminus{\{i_j\}}}\wedge d\overline{z}_J.
\end{array}
$$

Thereby
\begin{equation}\label{1}
\bar{\partial}( \zeta\lrcorner \partial \phi)=\displaystyle{\sum_{I,J}} \displaystyle{\sum_{k,r}}\,\frac{\partial\left(\zeta_k \dfrac{\partial \phi_{IJ}}{\partial z_k}\right)}{\partial \overline{z}_r}\, d\overline{z}_r \wedge dz_I\wedge d\overline{z}_J -\displaystyle{\sum_{I,J}} \displaystyle{\sum_{j,k,r}} (-1)^{j-1}\dfrac{\partial\left(\zeta_j \dfrac{\partial \phi_{IJ}}{\partial z_k}\right)}{\partial \overline{z}_r} d\overline{z}_r\wedge dz_k\wedge dz_{I\setminus{\{i_j\}}}\wedge d\overline{z}_J.
\end{equation}

Since $\bar{\partial}\zeta=\displaystyle{\sum_{j,r}} \,\dfrac{\partial\zeta_j}{\partial\overline{z}_r}\, d\overline{z}_r\wedge\dfrac{\partial}{\partial z_j}$, then
\begin{equation}\label{2}
\begin{array}{ll}
\bar{\partial}\zeta\lrcorner\partial \phi &=\displaystyle{\sum_{j,r}} \,\dfrac{\partial\zeta_j}{\partial\overline{z}_r}\, d\overline{z}_r\wedge\dfrac{\partial}{\partial z_j}\lrcorner\displaystyle{\sum_{ I,J}} \displaystyle{\sum_{k}}\,\dfrac{\partial \phi_{IJ}}{\partial z_k} \, dz_k\wedge dz_I\wedge d\overline{z}_J \\
&=\displaystyle{\sum_{ I,J}} \displaystyle{\sum_{j,k,r}}\,\dfrac{\partial\zeta_j}{\partial\overline{z}_r}\dfrac{\partial \phi_{IJ}}{\partial z_k} \, d\overline{z}_r\wedge\dfrac{\partial}{\partial z_j}\lrcorner (dz_k\wedge dz_I\wedge d\overline{z}_J)\\
&\overset{(i)}{=} \displaystyle{\sum_{ I,J}} \displaystyle{\sum_{k,r}}\,\dfrac{\partial\zeta_k}{\partial\overline{z}_r}\dfrac{\partial \phi_{IJ}}{\partial z_k} \, d\overline{z}_r\wedge dz_I\wedge d\overline{z}_J-\displaystyle{\sum_{I,J}} \displaystyle{\sum_{j,k,r}} (-1)^{j-1}\dfrac{\partial\zeta_j}{\partial \overline{z}_r}\dfrac{\partial \phi_{IJ}}{\partial z_k} d\overline{z}_r\wedge dz_k\wedge dz_{I\setminus{\{i_j\}}}\wedge d\overline{z}_J,
\end{array}
\end{equation}
where the equality $(i)$ holds from (\ref{contrac}).\\

On the other side, we have $\bar{\partial}\partial \phi=\displaystyle{\sum_{ I,J}} \displaystyle{\sum_{k,r}}\,\dfrac{\partial^2 \phi_{IJ}}{\partial\overline{z}_r\partial z_k} \, d\overline{z}_r\wedge dz_k\wedge dz_I\wedge d\overline{z}_J $, thus
\begin{equation}\label{3}
\begin{array}{ll}
\zeta\lrcorner \bar{\partial}\partial \phi &=\displaystyle{\sum_{j}} \,\zeta_j\, \dfrac{\partial}{\partial z_j}\lrcorner \displaystyle{\sum_{ I,J}} \displaystyle{\sum_{k,r}}\,\dfrac{\partial^2 \phi_{IJ}}{\partial\overline{z}_r\partial z_k} \, d\overline{z}_r\wedge dz_k\wedge dz_I\wedge d\overline{z}_J\\
&= \displaystyle{\sum_{ I,J}} \displaystyle{\sum_{j,k,r}}\,\zeta_j\dfrac{\partial^2 \phi_{IJ}}{\partial\overline{z}_r\partial z_k} \, \dfrac{\partial}{\partial z_j}\lrcorner ( d\overline{z}_r\wedge dz_k\wedge dz_I\wedge d\overline{z}_J)\\
&=-\displaystyle{\sum_{ I,J}} \displaystyle{\sum_{j,k,r}}\,\zeta_j\dfrac{\partial^2 \phi_{IJ}}{\partial\overline{z}_r\partial z_k} \,  d\overline{z}_r\wedge \dfrac{\partial}{\partial z_j}\lrcorner (dz_k\wedge dz_I\wedge d\overline{z}_J)\\
&\overset{(ii)}{=} -\displaystyle{\sum_{ I,J}} \displaystyle{\sum_{k,r}}\,\zeta_k\dfrac{\partial^2 \phi_{IJ}}{\partial\overline{z}_r\partial z_k} \, d\overline{z}_r\wedge dz_I\wedge d\overline{z}_J+\displaystyle{\sum_{I,J}} \displaystyle{\sum_{j,k,r}} (-1)^{j-1}\zeta_j\dfrac{\partial^2 \phi_{IJ}}{\partial\overline{z}_r\partial z_k} d\overline{z}_r\wedge dz_k\wedge dz_{I\setminus{\{i_j\}}}\wedge d\overline{z}_J,
\end{array}
\end{equation}
where the equality $(ii)$ holds from (\ref{contrac}).\\

From the computations of (\ref{1}), (\ref{2}) and (\ref{3}), the assertion (a) holds.
\item[(b)] Now, suppose that $v=\displaystyle{\sum_{l=1}^n} \,v_l\, d\overline{z}_l\wedge \dfrac{\partial}{\partial z_l}$. In the same way as in $(a)$, we have the following computations
$$\bar{\partial}(v\lrcorner \partial \phi)= $$
\begin{equation}\label{1'}
\displaystyle{\sum_{I,J}} \displaystyle{\sum_{k,r}}\,\frac{\partial\left(v_k \dfrac{\partial \phi_{IJ}}{\partial z_k}\right)}{\partial \overline{z}_r}\, d\overline{z}_r \wedge d\overline{z}_k\wedge dz_I\wedge d\overline{z}_J -
\displaystyle{\sum_{I,J}} \displaystyle{\sum_{l,k,r}} (-1)^{l-1}\dfrac{\partial\left(v_l \dfrac{\partial \phi_{IJ}}{\partial z_k}\right)}{\partial \overline{z}_r} d\overline{z}_r\wedge d\overline{z}_l\wedge dz_k\wedge dz_{I\setminus{\{i_l\}}}\wedge d\overline{z}_J,
\end{equation}
\begin{equation}\label{2'}
\bar{\partial}v\lrcorner \partial \phi=\displaystyle{\sum_{ I,J}} \displaystyle{\sum_{k,r}}\,\dfrac{\partial v_k}{\partial\overline{z}_r}\dfrac{\partial \phi_{IJ}}{\partial z_k} \, d\overline{z}_r\wedge d\overline{z}_k\wedge dz_I\wedge d\overline{z}_J-\displaystyle{\sum_{I,J}} \displaystyle{\sum_{k,l,r}} (-1)^{l-1}\dfrac{\partial v_l}{\partial \overline{z}_r}\dfrac{\partial \phi_{IJ}}{\partial z_k} d\overline{z}_r\wedge d\overline{z}_l\wedge dz_k\wedge dz_{I\setminus{\{i_l\}}}\wedge d\overline{z}_J,
\end{equation}
\begin{equation}\label{3'}
v\lrcorner \bar{\partial}\partial \phi=\displaystyle{\sum_{ I,J}} \displaystyle{\sum_{k,r}}\,v_k\dfrac{\partial^2 \phi_{IJ}}{\partial\overline{z}_r\partial z_k} \, d\overline{z}_r\wedge d\overline{z}_k\wedge dz_I\wedge d\overline{z}_J-\displaystyle{\sum_{I,J}} \displaystyle{\sum_{k,l,r}} (-1)^{l-1}v_l\dfrac{\partial^2 \phi_{IJ}}{\partial\overline{z}_r\partial z_k} d\overline{z}_r\wedge d\overline{z}_l\wedge dz_k\wedge dz_{I\setminus{\{i_l\}}}\wedge d\overline{z}_J.
\end{equation}
In the end, taking the sum of the equations (\ref{2'}) and (\ref{3'}), we obtain (\ref{1'}). This proves the assertion $(b)$ .
\end{itemize}
\end{proof}
It is well known that every Aeppli cohomology class contains a $d$-closed representative (cf. \cite{Pop15}). This means that, for every Aeppli class $[\chi]_A\in H^{p,q}_A(X,\C)$, we have 
\begin{equation}\label{closed repres}
d(\chi+\partial\phi+\bar{\partial}\psi)=0,
\end{equation}
where $\phi\in C^\infty_{p-1,q}(X,\C)$ and $\psi\in C^\infty_{p,q-1}(X,\C)$. The equation (\ref{closed repres}) is equivalent to
\begin{equation}\label{equations aeppli}
\bar{\partial}\chi=\partial\bar{\partial}\phi \hspace*{2ex} \mbox{ and } \hspace*{2ex} \partial\chi=-\partial\bar{\partial}\psi.
\end{equation} 
Notice that the solutions of (\ref{equations aeppli}) are unique up to $\ker\partial\bar{\partial}$. Then, if $\phi_{min}\in C^\infty_{p-1,q}(X,\C)$ and $\psi_{min}\in C^\infty_{p,q-1}(X,\C)$ are the minimal $L^2$-norm solutions of (\ref{equations aeppli}), then $\phi_{min}$, $\psi_{min}\in \ker(\partial\bar{\partial})^{\perp}=\mbox{ Im }(\partial\bar{\partial})^*$.
\begin{Def}\label{refinement}
Given a compact $\partial\bar{\partial}$-manifold $X$ and an arbitrary Hermitian metric $\omega$ on $X$. For any Aeppli cohomology class $[\tilde{\chi}]_A\in H^{p,q}_A(X,\C)$, suppose that $\chi$ is the  $\Delta_{A}$-harmonic representative of $[\tilde{\chi}]_A$. Let $\phi_{min}\in \mbox{ Im } (\partial\bar{\partial})^*\subset C^\infty_{p-1,q}(X,\C)$ and $\psi_{min}\in \mbox{ Im } (\partial\bar{\partial})^*\subset C^\infty_{p,q-1}(X,\C)$ be the minimal $L^2$-norm (w.r.t. $\omega$) solutions of (\ref{equations aeppli}).\\
We call the {\bf $ \omega$-minimal $d$-closed representative of the class} $[\chi]_A$, the following $d$-closed $(p,q)$-form
$$ \chi_{min}:=\chi+\partial\phi_{min}+\bar{\partial}\psi_{min}.$$
\end{Def}
\noindent  If $(X,\omega)$ is a K\"ahler manifold, then $\partial\chi=0$ and $\bar{\partial}\chi=0$. So, $\phi_{min}=0$ and $\psi_{min}=0$. Hence $\chi_{min}=\chi$.\\

We can now state the main result of this section.
\begin{Prop}\label{Prop}
Let $X$ be a Gauduchon Calabi-Yau $\partial\bar{\partial}$-manifold with $\dim_{\C}X=n$ and let $\omega$ be an arbitrary Hermitian metric on $X$. For any Aeppli-Gauduchon class $[\omega^{n-1}]_A\in H^{n-1,n-1}_A(X,\C)$, let $\omega^{n-1}$  be the $\omega$-minimal $d$-closed representative of the class $[\omega^{n-1}]_A$. Then, the following vector subspace of $H^{0,1}(X,T^{1,0}X)$
$$H^{0,1}(X,T^{1,0}X)_{[\omega^{n-1}]_A}:=\{[v]\in H^{0,1}(X,T^{1,0}X)\, /\, [v\lrcorner \omega^{n-1}]_A=0\in H^{n-2,n}_A(X,\C) \}. $$
is well defined, i.e. the Aeppli class $[v\lrcorner \omega^{n-1}]_A\in H^{n-2,n}_A(X,\C)$ is independent of the choice of representative $v$ in the class $[v]\in H^{0,1}(X,T^{1,0}X)$. Moreover, we have
$$ T_0\Delta_{[\omega^{n-1}]_A}\simeq H^{0,1}(X,T^{1,0}X)_{[\omega^{n-1}]_A}. $$
\end{Prop}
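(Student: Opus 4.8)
The plan is to exploit that the chosen representative $\omega^{n-1}$ of $[\omega^{n-1}]_A$ is $d$-closed (Definition \ref{refinement}), so that $\partial\omega^{n-1}=\bar\partial\omega^{n-1}=0$; this is the feature that replaces the balanced condition $d\omega^{n-1}=0$ used in \cite{Pop19}. I would prove the statement in two steps: first the well-definedness of the assignment $[v]\mapsto[v\lrcorner\omega^{n-1}]_A$, then the identification of the tangent space.

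For well-definedness, write $\Omega:=\omega^{n-1}$, of type $(n-1,n-1)$, so that $v\lrcorner\Omega$ is of type $(n-2,n)$; being of top anti-holomorphic degree it is automatically $\partial\bar\partial$-closed and hence determines a class in $H^{n-2,n}_A(X,\C)$. It then remains to see that replacing $v$ by $v+\bar\partial\zeta$ with $\zeta\in C^\infty(X,T^{1,0}X)$ does not change this class. Applying Lemma \ref{lem}(a) with $\partial\phi$ replaced by the $d$-closed form $\Omega$ (as in \cite{Pop19}, Lemma 4.3) and using $\bar\partial\Omega=0$ gives
$$\bar\partial(\zeta\lrcorner\Omega)=\bar\partial\zeta\lrcorner\Omega-\zeta\lrcorner\bar\partial\Omega=\bar\partial\zeta\lrcorner\Omega,$$
so that $(\bar\partial\zeta)\lrcorner\Omega=\bar\partial(\zeta\lrcorner\Omega)$ is $\bar\partial$-exact, a fortiori Aeppli-exact. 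Hence $[(v+\bar\partial\zeta)\lrcorner\Omega]_A=[v\lrcorner\Omega]_A$ and the subspace $H^{0,1}(X,T^{1,0}X)_{[\omega^{n-1}]_A}$ is well defined. This is precisely the step in which the $d$-closedness of $\Omega$, rather than the mere $\partial\bar\partial$-closedness of the Gauduchon form $\omega^{n-1}$, is indispensable.

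For the tangent space, recall from \cite{Pop19} (Theorem 1.2) that, $X$ being Calabi-Yau $\partial\bar\partial$, the Kuranishi family is unobstructed and $T_0\Delta\simeq H^{0,1}(X,T^{1,0}X)$, a tangent vector being represented by a Beltrami differential $v\in C^\infty_{0,1}(X,T^{1,0}X)$ with $\bar\partial v=0$. Fix the real $d$-closed $(2n-2)$-form $\Omega=\omega^{n-1}$; its De Rham class $\{\Omega\}$ is independent of $t$, while for each $t$ close to $0$ the Hodge-Aeppli decomposition (\ref{Hodge-Aeppli decomposition}) splits $\{\Omega\}$ into its $J_t$-pure-type Aeppli components, writing $\Omega=\alpha^{n,n-2}_t+\alpha^{n-1,n-1}_t+\alpha^{n-2,n}_t$. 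By definition $X_t$ is co-polarised by $[\omega^{n-1}]_A$ exactly when $\{\Omega\}$ is of $J_t$-type $(n-1,n-1)$, i.e. when the Aeppli projection $\mathrm{Pr}^{n-2,n}_t(\{\Omega\})=[\alpha^{n-2,n}_t]_A\in H^{n-2,n}_A(X_t,\C)$ vanishes (the vanishing of the conjugate projection onto $H^{n,n-2}_A(X_t,\C)$ being equivalent, since $\Omega$ is real). Thus $\Delta_{[\omega^{n-1}]_A}$ is the zero locus of $t\mapsto[\alpha^{n-2,n}_t]_A$, and $T_0\Delta_{[\omega^{n-1}]_A}$ is the kernel of the differential at $0$ of this map.

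Finally I would compute that differential. Since $\Omega$ is of pure type $(n-1,n-1)$ for $J_0$, one has $\alpha^{n-2,n}_0=0$, and the first-order variation of the $J_t$-$(n-2,n)$-component of the fixed form $\Omega$ is $\dot\alpha^{n-2,n}(0)=v\lrcorner\Omega$, the contraction by the first-order Beltrami differential (the same computation as in the balanced case of \cite{Pop19}, the only change being that $\Omega$ is now the $d$-closed Gauduchon representative). Because the base value $\alpha^{n-2,n}_0$ vanishes, differentiating the class commutes with taking the first-order variation of a representative, so
$$\frac{d}{dt}\Big|_{0}[\alpha^{n-2,n}_t]_A=[v\lrcorner\Omega]_A\in H^{n-2,n}_A(X,\C),$$
whose kernel is exactly $\{[v]\,/\,[v\lrcorner\omega^{n-1}]_A=0\}=H^{0,1}(X,T^{1,0}X)_{[\omega^{n-1}]_A}$, giving the claimed isomorphism. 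I expect the main obstacle to be this last step: making rigorous the differentiation of the Aeppli projection while the groups $H^{n-2,n}_A(X_t,\C)$ themselves vary with $t$. I would handle it by observing that, the $\partial\bar\partial$-property being deformation-open, these groups have locally constant dimension (they are summands of the fixed $H^{2n-2}_{DR}(X,\C)$ in (\ref{Hodge-Aeppli decomposition})) and hence assemble into a smooth vector bundle over $\Delta$ along which the section $t\mapsto[\alpha^{n-2,n}_t]_A$ may be differentiated; the vanishing of its value at $0$ then guarantees that the derivative is the well-defined class $[v\lrcorner\Omega]_A$ computed above.
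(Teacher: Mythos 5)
Your proposal is correct and follows essentially the same route as the paper: well-definedness via the contraction identity $\bar\partial\zeta\lrcorner\,\omega^{n-1}=\bar\partial(\zeta\lrcorner\,\omega^{n-1})$ using the $d$-closedness of the $\omega$-minimal representative, then identifying $\Delta_{[\omega^{n-1}]_A}$ as the zero locus of the $(n-2,n)$ Aeppli projection of the fixed De Rham class and computing its derivative at $0$ to be $[v\lrcorner\,\omega^{n-1}]_A$. Your smooth-vector-bundle justification for differentiating the section $t\mapsto[\alpha^{n-2,n}_t]_A$ is just a spelled-out version of the paper's appeal to the Gauss-Manin connection on the Hodge bundle $t\mapsto H^{2n-2}_{DR}(X_t,\C)$, so the two arguments coincide in substance.
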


\begin{proof}

\noindent let $\omega^{n-1}$  be the $\omega$-minimal $d$-closed representative of the class $[\omega^{n-1}]_A$ and suppose that $v+\bar{\partial}\zeta$ is another representative of the class $[v]$ for some vector field $\zeta\in C^\infty(X,T^{1,0}X)$, then
$$(v+\bar{\partial}\zeta)\lrcorner\, \omega^{n-1}=v \lrcorner \,\omega^{n-1}+\bar{\partial}\zeta \,\lrcorner \omega^{n-1}.$$
We are thus reduced to showing that $\bar{\partial}\zeta \lrcorner\,\omega^{n-1} \in \mbox{ Im }\partial+\mbox{ Im }\bar{\partial}. $ Indeed:\\
 In (\cite{Pop19}, Lemma 4.3), it is already proved that:
$$
\bar{\partial}\zeta\lrcorner\,\omega^{n-1}=\bar{\partial}(\zeta\lrcorner\,\omega^{n-1})+\zeta\lrcorner\bar{\partial}\omega^{n-1}.
$$

\noindent While, by assumption, $\omega^{n-1}$ is $d$-closed. Accordingly, $\zeta\lrcorner\,\bar{\partial}\omega^{n-1}=0$. So
$$ \bar{\partial}\zeta\lrcorner\,\omega^{n-1}=\bar{\partial}(\zeta\lrcorner\,\omega^{n-1}).$$
As \, $\bar{\partial}\zeta\lrcorner \,\omega^{n-1}\in \mbox{ Im }\bar{\partial}\subset \mbox{ Im }\partial+\mbox{ Im }\bar{\partial}$, it follows that
$$[\bar{\partial}\zeta\lrcorner\,\omega^{n-1}]_A=0.$$ This proves that:
$$ [(v+\bar{\partial}\zeta)\lrcorner\,\omega^{n-1}]_A=[v \lrcorner \,\omega^{n-1}]_A.$$

\noindent Under the Kodaira-Spencer map, one can suppose that $[v]\in H^{0,1}(X,T^{1,0}X)$ is the image of $\dfrac{\partial}{\partial t_i}_{\mid t_i=0}$ 
$$
\begin{array}{ll}
\rho:& \hspace*{0.5cm} T_0\Delta \overset{\simeq}{\longrightarrow} H^{0,1}(X,T^{1,0}X)\\
& \dfrac{\partial}{\partial t_i}_{\mid t_i=0} \longmapsto [v],
\end{array}
$$
where $t_1,\cdots, t_N$ ($N=\dim_\C  H^{0,1}(X,T^{1,0}X)$) are local holomorphic coordinates about $0$ in $\Delta$ and $t=(t_1,\cdots, t_N)\in\Delta$. \\

By assumption $X_0=X$ is a $\partial\bar{\partial}$-manifold. So, by \cite{Wu}, $X_t$ is also a $\partial\bar{\partial}$-manifold for $t$ close enough to $0$ with $t\in \Delta$, then the following Hodge-Aeppli decomposition of $X_t$
$$H^{2n-2}_{DR}(X,\C)\simeq H^{n,n-2}_{A}(X_t,\C)\bigoplus H^{n-1,n-1}_{A}(X_t,\C)\bigoplus H^{n-2,n}_{A}(X_t,\C) $$
holds with the Hodge symmetry $ H^{n-2,n}_{A}(X_t,\C)\simeq\overline{ H^{n,n-2}_{A}(X_t,\C)}$. Hence, we have the following splitting of the De Rham class $\{\omega^{n-1}\}$:
$$ \{\omega^{n-1}\}=[\omega^{n-1}]_{A,t}^{n-2,n}+[\omega^{n-1}]_{A,t}^{n-1,n-1}+[\omega^{n-1}]_{A,t}^{n,n-2}$$
where $[\omega^{n-1}]_{A,t}^{n-2,n}=\overline{[\omega^{n-1}]_{A,t}^{n,n-2}}$ and $[\omega^{n-1}]_{A,t}^{n-1,n-1}$ is real. Therefore, we have:
$$\Delta_{[\omega^{n-1}]_A}=\{t\in \Delta\,/\, [\omega^{n-1}]_{A,t}^{n-2,n}=0\in H^{n-2,n}_A(X_t,\C)\}. $$
Thus, the derivative of the class $[\omega^{n-1}]_{A,t}^{n-2,n}\in H^{n-2,n}_A(X_t,\C)$ under the Gauss-Manin connection of the Hodge bundle $\Delta\ni t\longmapsto H^{2n-2}_{DR}(X_t,\C)$ in the direction of $t_i=0$ is the class $[v\lrcorner \omega^{n-1}]_A\in H^{n-2,n}_A(X,\C)$.\\

The proof of Proposition \ref{Prop} is complete.

\end{proof}

\begin{Obs}
For any Aeppli-Gauduchon class $[\omega^{n-1}]_A\in H^{n-1,n-1}_A(X,\C)$ and any class $[v]\in H^{0,1}(X,T^{1,0}X)$, the Aeppli class $[v\lrcorner \omega^{n-1}]_A\in H^{n-2,n}_A(X,\C)$ is independent of the choice of $d$-closed representative $\tilde{\omega}^{n-1}$ (only those which  differ from $\tilde{\omega}^{n-1}$ by a form that is both $d$-closed and Aeppli-exact) in the class $[\omega^{n-1}]_A\in H^{n-1,n-1}(X,\C)$.
\end{Obs}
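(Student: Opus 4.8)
The plan is to reduce the statement to a single exactness computation and then to invoke the $\partial\bar{\partial}$-lemma. Let $\tilde{\omega}^{n-1}$ and $\tilde{\omega}^{n-1}+\eta$ be two $d$-closed representatives of the same class $[\omega^{n-1}]_A$, so that by hypothesis $\eta$ is both $d$-closed and Aeppli-exact, i.e. $\eta=\partial\alpha+\bar{\partial}\beta$ with $\partial\eta=\bar{\partial}\eta=0$. By the first part of Proposition \ref{Prop} the Aeppli class $[v\lrcorner\,\tilde{\omega}^{n-1}]_A$ does not depend on the chosen representative of $[v]$, so I may fix a $\bar{\partial}$-closed representative $v$ of $[v]\in H^{0,1}(X,T^{1,0}X)$, that is $\bar{\partial}v=0$. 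Then the whole statement reduces to proving that
$$[v\lrcorner\eta]_A=0\in H^{n-2,n}_A(X,\C).$$
Note that $v\lrcorner\eta$ has bidegree $(n-2,n)$, hence is automatically $\bar{\partial}$-closed (and so $\partial\bar{\partial}$-closed), and the class $[v\lrcorner\eta]_A$ is well defined.

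The second step, where the $\partial\bar{\partial}$-assumption on $X$ is used, upgrades the Aeppli-exact form $\eta$ to a $\partial\bar{\partial}$-exact one. The form $\bar{\partial}\beta$ is $\bar{\partial}$-exact and, since $\partial\eta=0$ forces $\partial\bar{\partial}\beta=0$, it is also $\partial$-closed, hence $d$-closed; by the $\partial\bar{\partial}$-lemma it is therefore $\partial\bar{\partial}$-exact. Likewise $\partial\alpha$ is $\partial$-exact and, since $\bar{\partial}\eta=0$ forces $\bar{\partial}\partial\alpha=0$, it is $d$-closed, hence again $\partial\bar{\partial}$-exact. Adding the two, I obtain an $(n-2,n-2)$-form $\gamma$ with
$$\eta=\partial\bar{\partial}\gamma=-\bar{\partial}\partial\gamma.$$

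Finally, I would contract with $v$ and apply Lemma \ref{lem}(b) to $\phi=\gamma$. Since $\bar{\partial}v=0$, that lemma gives $\bar{\partial}(v\lrcorner\partial\gamma)=v\lrcorner\bar{\partial}\partial\gamma=-v\lrcorner\partial\bar{\partial}\gamma=-v\lrcorner\eta$, so that
$$v\lrcorner\eta=-\bar{\partial}(v\lrcorner\partial\gamma)\in\mbox{ Im }\bar{\partial}\subset\mbox{ Im }\partial+\mbox{ Im }\bar{\partial}.$$
Hence $[v\lrcorner\eta]_A=0$, the Aeppli class $[v\lrcorner\tilde{\omega}^{n-1}]_A$ is unchanged, and the Observation follows. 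The main obstacle is the $\partial\alpha$-part of $\eta$: the $\bar{\partial}\beta$-part is handled directly by Lemma \ref{lem}(b), since it is visibly $\bar{\partial}(v\lrcorner\beta)$, but $v\lrcorner\partial\alpha$ is not manifestly exact; it is precisely to tame this term that one must first trade the mere Aeppli-exactness of $\eta$ for genuine $\partial\bar{\partial}$-exactness through the $\partial\bar{\partial}$-lemma.
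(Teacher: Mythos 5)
Your proof is correct and takes essentially the same route as the paper: both arguments use the $\partial\bar{\partial}$-lemma to upgrade the $d$-closed, Aeppli-exact difference to an exact form (the paper shows $\tilde{w}$ is $\bar{\partial}$-exact; you show $\eta=\partial\bar{\partial}\gamma$, which is the same thing here with $\varphi=-\partial\gamma$), and then contract with the $\bar{\partial}$-closed $v$ to place $v\lrcorner\eta$ in $\mbox{Im }\bar{\partial}$, killing its Aeppli class. The only cosmetic difference is that the paper packages the exactness via the surjectivity of $H^{n-1,n-1}_{BC}(X,\C)\longrightarrow H^{n-1,n-1}_{A}(X,\C)$, while you apply the $\partial\bar{\partial}$-lemma directly to the $\partial\alpha$ and $\bar{\partial}\beta$ pieces separately and conclude via Lemma \ref{lem}(b).
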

\begin{proof}
Let $\tilde{\omega}^{n-1}:=\omega^{n-1}+\partial\phi+\bar{\partial}\psi$ be a $d$-closed representative of $[\omega^{n-1}]_A$. Using the surjectivity of the map $H^{n-1,n-1}_{BC}(X,\C) \longrightarrow H^{n-1,n-1}_{A}(X,\C)$, it is clear that for any Aeppli cohomology class $[w]_A\in H^{n-1,n-1}_A(X,\C)$, there exists a Bott-Chern cohomology class $[\tilde{w}]\in H^{n-1,n-1}_{BC}(X,\C)$ such that 
 $[w]_A=[\tilde{w}]_A$. This means that there exist $\alpha\in C^\infty_{n-2,n-1}(X,\C)$ and $\beta\in C^\infty_{n-1,n-2}(X,\C)$ such that
  $$w=\tilde{w}+\partial\alpha+\bar{\partial}\beta.$$
Putting $w:=\partial\phi+\bar{\partial}\psi$, implies that $\tilde{w}$ is a $d$-closed Aeppli-exact $(n-1,n-1)$-form and we have 
$$ [\tilde{\omega}^{n-1}-\tilde{w}]_A=[\tilde{\omega}^{n-1}]_A=[\omega^{n-1}]_A. $$ 
Let $\tilde{\omega}^{n-1}-\tilde{w}$ be another $d$-closed representative of $[\omega^{n-1}]_A$. Then 
$$[v\lrcorner(\tilde{\omega}^{n-1}-\tilde{w})]_A=[v\lrcorner\tilde{\omega}^{n-1}]_A-[v\lrcorner \tilde{w}]_A, \hspace{3ex} \forall [v]\in H^{0,1}(X,T^{1,0}X). $$
It is easy to check that $\tilde{w}$ is $\bar{\partial}$-exact, i.e. there exists $\varphi\in C^\infty_{n-1,n-2}(X,\C)$ such that $\tilde{w}=\bar{\partial}\varphi$. Hence
$$v\lrcorner \tilde{w}= v\lrcorner \bar{\partial}\varphi= \bar{\partial}(v\lrcorner \varphi)\in \mbox{ Im }\bar{\partial}\subset  \mbox{ Im }\partial+ \mbox{ Im }\bar{\partial}$$
where the last equation holds since $\bar{\partial}v=0$. Consequently, $[v\lrcorner \tilde{w}]_A=0$. Therefore
$$[v\lrcorner (\tilde{\omega}^{n-1}-\tilde{w})]_A =[v\lrcorner \tilde{\omega}^{n-1}]_A.$$
\end{proof}

Comparing the balanced co-polarisation to Gauduchon co-polarisation, we get the following result.
\begin{Prop}\label{Compr}
Let $(X,\omega)$ be a balanced Calabi-Yau $\partial\bar{\partial}$-manifold of complex dimension $n$. Then, the following identity holds:
\begin{equation}\label{compr}
H^{0,1}(X,T^{1,0}X)_{[\omega^{n-1}]}= H^{0,1}(X,T^{1,0}X)_{[\omega^{n-1}]_A}
\end{equation}

\noindent If we suppose that $(X,\omega)$ is a compact K\"ahler Calabi-Yau manifold, one obtains
\begin{equation}\label{compr 1}
 H^{0,1}(X,T^{1,0}X)_{[\omega]}= H^{0,1}(X,T^{1,0}X)_{[\omega^{n-1}]_A}.
\end{equation}
\end{Prop}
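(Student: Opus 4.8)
The plan is to show that both sides of \eqref{compr} cut out the same subspace of $H^{0,1}(X,T^{1,0}X)$ by reducing the two defining conditions to a statement about one and the same $(n-2,n)$-form, namely $v\lrcorner\omega^{n-1}$ with $\omega^{n-1}$ the genuine balanced power. First I would observe that when $\omega$ is balanced we have $d\omega^{n-1}=0$, so $\omega^{n-1}$ is itself a $d$-closed representative of the Aeppli class $[\omega^{n-1}]_A$. The $\omega$-minimal $d$-closed representative $\omega^{n-1}_{min}$ of Definition \ref{refinement} is another $d$-closed form in the same Aeppli class, so their difference is both $d$-closed and Aeppli-exact; by the Observation following Proposition \ref{Prop} the Aeppli class $[v\lrcorner(\cdot)]_A$ is insensitive to this replacement, whence $[v\lrcorner\omega^{n-1}_{min}]_A=[v\lrcorner\omega^{n-1}]_A$ for every $[v]$. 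Thus the Gauduchon condition may be tested against the balanced power $\omega^{n-1}$, exactly the form appearing in the balanced condition \eqref{balanced}.

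Next I would compare the two vanishing conditions on the common form $\alpha:=v\lrcorner\omega^{n-1}\in C^\infty_{n-2,n}(X,\C)$. Since $q=n$ is maximal, $\alpha$ is automatically $\bar{\partial}$-closed, so both $[\alpha]_{\bar{\partial}}\in H^{n-2,n}_{\bar{\partial}}(X,\C)$ and $[\alpha]_A\in H^{n-2,n}_A(X,\C)$ are defined. The balanced condition is $[\alpha]_{\bar{\partial}}=0$ and the Gauduchon condition is $[\alpha]_A=0$. These are equivalent: $[\alpha]_{\bar{\partial}}=0$ trivially forces $[\alpha]_A=0$ because $\mbox{Im }\bar{\partial}\subset\mbox{Im }\partial+\mbox{Im }\bar{\partial}$; conversely, if $\alpha=\partial a+\bar{\partial}b$ then $\partial a=\alpha-\bar{\partial}b$ is $\bar{\partial}$-closed and $\partial$-exact, so the $\partial\bar{\partial}$-lemma gives $\partial a=\partial\bar{\partial}e$, whence $\alpha=\bar{\partial}(b-\partial e)$ is $\bar{\partial}$-exact and $[\alpha]_{\bar{\partial}}=0$. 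Equivalently, one may simply quote that on a $\partial\bar{\partial}$-manifold the natural map $H^{n-2,n}_{\bar{\partial}}(X,\C)\to H^{n-2,n}_A(X,\C)$ is an isomorphism. This establishes \eqref{compr}.

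For \eqref{compr 1} I would assume $\omega$ Kähler, so that $d\omega=0$ and in particular $\omega$ is balanced, reducing the right-hand side to $H^{0,1}(X,T^{1,0}X)_{[\omega^{n-1}]}$ by \eqref{compr}. It then remains to match the latter with $H^{0,1}(X,T^{1,0}X)_{[\omega]}$, i.e. with the condition $[v\lrcorner\omega]_{\bar{\partial}}=0\in H^{0,2}_{\bar{\partial}}(X,\C)$. Writing the contraction as a derivation yields the pointwise identity $v\lrcorner\omega^{n-1}=(n-1)\,(v\lrcorner\omega)\wedge\omega^{n-2}=(n-1)\,L^{n-2}(v\lrcorner\omega)$, where $L=\omega\wedge(\cdot)$. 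Since $\bar{\partial}v=0$ and $\bar{\partial}\omega=0$, the Leibniz rule shows $v\lrcorner\omega$ is $\bar{\partial}$-closed and that $L$ descends to Dolbeault cohomology, so $[v\lrcorner\omega^{n-1}]_{\bar{\partial}}=(n-1)\,L^{n-2}[v\lrcorner\omega]_{\bar{\partial}}$. By the Hard Lefschetz theorem the iterated operator $L^{n-2}\colon H^{0,2}_{\bar{\partial}}(X,\C)\to H^{n-2,n}_{\bar{\partial}}(X,\C)$ is an isomorphism, hence $[v\lrcorner\omega^{n-1}]_{\bar{\partial}}=0$ if and only if $[v\lrcorner\omega]_{\bar{\partial}}=0$, which is precisely \eqref{compr 1}.

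The routine parts are the Leibniz computations and the constant in the contraction identity; the point requiring care is the first step, where one must make sure that passing from the abstractly defined $\omega$-minimal representative to the concrete balanced power $\omega^{n-1}$ does not alter the tested Aeppli class — this is exactly what the Observation guarantees, and it is what makes the balanced and Gauduchon co-polarisations literally coincide rather than merely correspond under an isomorphism. The Kähler reduction in \eqref{compr 1} then rests squarely on Hard Lefschetz, whose applicability is secured by the closedness of $\omega$.
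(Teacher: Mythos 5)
Your proof is correct, and for the identity \eqref{compr} it is in substance the paper's own argument: the inclusion $H^{0,1}(X,T^{1,0}X)_{[\omega^{n-1}]}\subset H^{0,1}(X,T^{1,0}X)_{[\omega^{n-1}]_A}$ via $\mbox{Im }\bar{\partial}\subset\mbox{Im }\partial+\mbox{Im }\bar{\partial}$, and the converse by applying the $\partial\bar{\partial}$-lemma to the $d$-closed, $\partial$-exact $(n-2,n)$-form $\partial a$ to upgrade Aeppli-triviality of $v\lrcorner\omega^{n-1}$ to $\bar{\partial}$-exactness. You diverge from the paper in two respects, both to your credit. First, your preliminary step is a genuine addition: the definition of $H^{0,1}(X,T^{1,0}X)_{[\omega^{n-1}]_A}$ in Proposition \ref{Prop} tests against the $\omega$-minimal $d$-closed representative of Definition \ref{refinement}, whereas the paper's proof silently tests against the balanced power $\omega^{n-1}$ itself; since the two are distinct $d$-closed forms in the same Aeppli class, your appeal to the Observation following Proposition \ref{Prop} (their difference being $d$-closed and Aeppli-exact) is exactly what legitimises the substitution, a point the paper elides. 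Second, for \eqref{compr 1} the paper does not argue via Lefschetz theory at all: it simply quotes (\cite{Pop19}, Proposition 4.4) for the identity $H^{0,1}(X,T^{1,0}X)_{[\omega]}=H^{0,1}(X,T^{1,0}X)_{[\omega^{n-1}]}$ on a compact K\"ahler manifold and combines it with \eqref{compr}, while you reprove that comparison from scratch via the contraction identity $v\lrcorner\omega^{n-1}=(n-1)\,(v\lrcorner\omega)\wedge\omega^{n-2}$ and the Hard Lefschetz isomorphism $L^{n-2}\colon H^{0,2}_{\bar{\partial}}(X,\C)\to H^{n-2,n}_{\bar{\partial}}(X,\C)$ (which does respect the bigrading on a compact K\"ahler manifold, by Hodge symmetry and the bidegree-$(1,1)$ nature of $L$, so this step is sound, as is your constant). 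What the paper's citation buys is brevity; what your route buys is a self-contained proof whose one nontrivial input, Hard Lefschetz, is justified precisely by the $d$-closedness of $\omega$ --- and it makes transparent why the statement fails to extend beyond the K\"ahler case, where $L$ no longer descends to Dolbeault cohomology.
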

\begin{proof}
 Let $\omega$ be a balanced metric and let $[v]\in H^{0,1}(X,T^{1,0}X)_{[\omega^{n-1}]}$. So $[v\lrcorner \omega^{n-1}]=0\in H^{n-2,n}_{\bar{\partial}}(X,\C)$ (where $H^{n-2,n}_{\bar{\partial}}(X,\C)$ is Dolbeault cohomology group), i.e. $v\lrcorner \omega^{n-1}\in \mbox{ Im } \bar{\partial}\subset \mbox{ Im }\partial+\mbox{ Im }\bar{\partial} $. This proves that $[v]\in H^{0,1}(X,T^{1,0}X)_{[\omega^{n-1}]_A}$.\\
 
\noindent Conversely, suppose that $[v]\in H^{0,1}(X,T^{1,0}X)_{[\omega^{n-1}]_A}$. Then, $[v\lrcorner \omega^{n-1}]_A=0\in H^{n-2,n}_{A}(X,\C)$. In other words, there exist $(n-3,n)$-form $\alpha$ and $(n-2,n-1)$-form $\beta$ such that 
$$v\lrcorner\omega^{n-1} =\partial\alpha+\bar{\partial}\beta.$$
\noindent Meanwhile, $\partial\alpha$ is an $(n-2,n)$-form that is $d$-closed (for bidegree reasons) and $\partial$-exact. By the $\partial\bar{\partial}$-assumption, $\partial\alpha$ is $\bar{\partial}$-exact. Thus $v\lrcorner\omega^{n-1}$ is $\bar{\partial}$-exact.\\
\noindent On the other side, we have $\bar{\partial}(v\lrcorner\omega^{n-1})=0$ (for bidegree reasons).
 Therefore, $[v\lrcorner \omega^{n-1}]=0\in H^{n-2,n}_{\bar{\partial}}(X,\C)$ and the identity (\ref{compr}) follows.\\
 
In the case where $X$ is K\"ahler, due to a comparison in (\cite{Pop19}, Proposition 4.4), the balanced co-polarised deformations of $X$ coincide with polarised deformations
\begin{equation}\label{compr2}
 H^{0,1}(X,T^{1,0}X)_{[\omega]}= H^{0,1}(X,T^{1,0}X)_{[\omega^{n-1}]}.
\end{equation}
While, recall that if $X$ is K\"ahler, then $X$ is balanced. Therefore, the identity (\ref{compr}) holds on K\"ahler manifold. Accordingly, from the identities (\ref{compr2}) and (\ref{compr}), the identity (\ref{compr 1}) follows.
\end{proof}
\section{Primitive $(n-1,1)$-class and Weil-Petersson metrics on $\Delta_{[\omega^{n-1}]_A}$}\label{section prim+WP}
In this section, we discuss the notion of primitive classes of type $(n-1,1)$ on a compact Gauduchon Calabi-Yau $\partial\bar{\partial}$-manifold. We pursue by defining the Weil-Petersson metrics on $\Delta_{[\omega^{n-1}]_A}$ and ending with the comparison of the Weil-Petersson metric with the period map metric on $\Delta_{[\omega^{n-1}]_A}$ (cf.  Corollary \ref{comparison}).\\
Throughout this section, for any Aeppli-Gauduchon class $[\omega^{n-1}]_A\in H^{n-1,n-1}_A(X,\C)$, we consider that $\omega^{n-1}$  is its $\omega$-minimal $d$-closed representative.\\
\subsection{Primitive $(n-1,1)$-classes on Gauduchon manifolds}\label{sect prim}
The main purpose of this subsection is to study the existence of a primitive ($d$-closed) representative of a primitive $(n-1,1)$-class. For that, we shall start by defining the space of primitive classes.
\begin{Def}\label{def Gprim}
Let $(X,\omega)$ be a Gauduchon Calabi-Yau $\partial\bar{\partial}$-manifold with $\dim_\C X=n$. Consider that $u$ is the non-vanishing holomorphic $(n,0)$-form on $X$ and $[\omega^{n-1}]_A$ is an Aeppli-Gauduchon class on $X$. One defines {\bf the space of primitive classes of type} $(n-1,n)$ as the following:
$$H^{n-1,n}_{Gprim}(X,\C):= T_{[u]}\left(H^{0,1}(X,T^{1,0}X)_{[\omega^{n-1}]_A}\right)\subset H^{n-1,1}_A(X,\C)\simeq H^{n-1,1}(X,\C), $$
where $T_{[u]}:H^{0,1}(X,T^{1,0}X)\overset{\tilde{T}_{[u]}}{\longrightarrow}H^{n-1,1}(X,\C) \overset{\simeq}{\longrightarrow} H^{n-1,1}_A(X,\C) $, $[v]\longmapsto [v\lrcorner u]_{\bar{\partial}}\longmapsto[v\lrcorner u]_A$ \\
and $\tilde{T}_{[u]}$ is the Calabi-Yau isomorphism (cf. \cite{Pop19}, Lemma 3.3, p. 684) while $H^{0,1}(X,T^{1,0}X)_{[\omega^{n-1}]_A}$ is the vector subspace of $H^{0,1}(X,T^{1,0}X)$ defined in proposition \ref{Prop}. 
\end{Def}

\noindent Meaningly, for any $[v]\in H^{0,1}(X,T^{1,0}X)$, we have: 
$$ [v\lrcorner u]\in H^{n-1,1}_{Gprim}(X,\C) \hspace*{1.5ex} \Longleftrightarrow \hspace*{1.5ex} [v\lrcorner \omega^{n-1}]_A=0\in H^{n-2,n}_A(X,\C). $$
We denoted the space of primitive classes in the Gauduchon case by $H^{n-1,1}_{Gprim}(X,\C)$ to distinguish the notation with the space of primitive classes in the standard case and the balanced case. By proposition \ref{Compr}, we have
 $$H^{0,1}(X,T^{1,0}X)_{[\omega^{n-1}]_{\bar{\partial}}}=H^{0,1}(X,T^{1,0}X)_{[\omega^{n-1}]_A}$$
on any balanced Calabi-Yau $ \partial\bar{\partial}$-manifold. This means that:
$$ [v\lrcorner \omega^{n-1}]_{\bar{\partial}}=0 \in H^{n-2,n}_{\bar{\partial}}(X,\C) \Longleftrightarrow [v\lrcorner \omega^{n-1}]_A=0 \in H^{n-2,n}_A(X,\C)$$
for any $[v]\in H^{0,1}(X,T^{1,0}X)$. Therefore, the primitive $(n-1,1)$-classes defined by the Aeppli-Gauduchon class retain the properties of primitive $(n-1,1)$-classes defined by balanced class on any balanced Calabi-Yau $\partial\bar{\partial}$-manifold $(X,\omega)$. When the metric $\omega$ is K\"ahler, the definition of $H^{n-1,1}_{Gprim}(X,\C)$ coincides with both of the definition in the balanced case and with the standard definition of $H^{n-1,1}_{prim}(X,\C)$.\\

In the following, we will investigate the existence of a primitive $d$-closed representative of a primitive $(n-1,1)$-class.

\begin{Prop}
Let $(X,\omega)$ be a compact Hermitian manifold of dimension $n$. Suppose that $\alpha$ is a primitive $(n-1,1)$-form on $X$. The following three statements are equivalent.
\begin{enumerate}
\item[(i)] $\alpha$ is $d$-closed,
\item[(ii)] $\alpha$ is $\Delta_A$-harmonic,
\item[(iii)]$\alpha$ is $\Delta_{BC}$-harmonic.
\end{enumerate}
\end{Prop}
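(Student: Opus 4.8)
The plan is to reduce everything to a single observation: because $\alpha$ is primitive of type $(n-1,1)$, the Hodge star acts on it as a nonzero scalar. Indeed, formula \eqref{(4)} gives $*\alpha = c\,\alpha$ with $c = i^{n^2+2n-2} \neq 0$ (a power of $i$, so no reality or positivity hypothesis on $\alpha$ is ever needed). Feeding this into the adjoint identities \eqref{(3)} converts each co-closedness condition on $\alpha$ into an honest closedness condition. Since $\partial^* = -*\bar{\partial}*$, $\bar{\partial}^* = -*\partial*$ and $(\partial\bar{\partial})^* = -*\partial\bar{\partial}*$, I obtain $\partial^*\alpha = -c*\bar{\partial}\alpha$, $\bar{\partial}^*\alpha = -c*\partial\alpha$ and $(\partial\bar{\partial})^*\alpha = -c*\partial\bar{\partial}\alpha$. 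As $*$ is an isomorphism and $c \neq 0$, these give the equivalences $\partial^*\alpha = 0 \Leftrightarrow \bar{\partial}\alpha = 0$, $\bar{\partial}^*\alpha = 0 \Leftrightarrow \partial\alpha = 0$, and $(\partial\bar{\partial})^*\alpha = 0 \Leftrightarrow \partial\bar{\partial}\alpha = 0$.

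Next I would invoke the explicit descriptions \eqref{(1)} and \eqref{(2)} of the two harmonic spaces. By \eqref{(1)}, $\alpha$ is $\Delta_A$-harmonic iff $\partial^*\alpha = \bar{\partial}^*\alpha = \partial\bar{\partial}\alpha = 0$, which by the previous step is equivalent to $\bar{\partial}\alpha = \partial\alpha = \partial\bar{\partial}\alpha = 0$. By \eqref{(2)}, $\alpha$ is $\Delta_{BC}$-harmonic iff $\partial\alpha = \bar{\partial}\alpha = (\partial\bar{\partial})^*\alpha = 0$, equivalently $\partial\alpha = \bar{\partial}\alpha = \partial\bar{\partial}\alpha = 0$. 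Thus (ii) and (iii) are each equivalent to the same triple $\{\partial\alpha=0,\ \bar{\partial}\alpha=0,\ \partial\bar{\partial}\alpha=0\}$. Since $\bar{\partial}\alpha = 0$ forces $\partial\bar{\partial}\alpha = \partial(\bar{\partial}\alpha) = 0$ automatically, the third condition is redundant and the triple collapses to $\partial\alpha = 0$ and $\bar{\partial}\alpha = 0$. As $\alpha$ has pure type $(n-1,1)$, the components $\partial\alpha$ and $\bar{\partial}\alpha$ of $d\alpha$ lie in distinct bidegrees, so this is exactly the statement that $\alpha$ is $d$-closed, i.e. (i). This closes the loop (i) $\Leftrightarrow$ (ii) $\Leftrightarrow$ (iii). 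As a shortcut, the equivalence (ii) $\Leftrightarrow$ (iii) also follows at once from \eqref{(5)} together with $*\alpha = c\,\alpha$, since $\ker\Delta_{BC}$ is a linear subspace and $c \neq 0$.

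The proof carries essentially no obstacle once formula \eqref{(4)} is in place: the whole content is that primitivity together with the bidegree $(n-1,1)$ makes $*$ a scalar on $\alpha$, which is what lets the first- and fourth-order adjoint operators be traded for $\partial,\bar{\partial},\partial\bar{\partial}$. The only point deserving a word of care is that the fourth-order conditions ($\partial\bar{\partial}\alpha = 0$ in \eqref{(1)}, and $(\partial\bar{\partial})^*\alpha = 0$ in \eqref{(2)}) are automatically implied once the first-order ones hold; this is immediate from $\partial\bar{\partial}\alpha = \partial(\bar{\partial}\alpha)$ and the scalar identity above. I would also note that the argument uses only the general Hermitian facts \eqref{(1)}, \eqref{(2)}, \eqref{(3)}, \eqref{(5)} and the primitivity formula \eqref{(4)}, so no $\partial\bar{\partial}$- or Gauduchon assumption enters, consistent with the statement being for an arbitrary compact Hermitian $(X,\omega)$.
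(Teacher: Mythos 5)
Your proposal is correct and takes essentially the same approach as the paper: both arguments hinge on the fact that, by (\ref{(4)}), $*$ acts on a primitive $(n-1,1)$-form as the nonzero scalar $i^{n^2+2n-2}$, combined with the kernel descriptions (\ref{(1)}) and (\ref{(2)}) — and your closing shortcut via (\ref{(5)}) is precisely how the paper proves (ii)$\Leftrightarrow$(iii). The only difference is cosmetic: where the paper cites (\cite{Pop19}, Lemma 4.12) for the equivalence between the first-order closedness and co-closedness conditions, you re-derive it directly from (\ref{(3)}) and (\ref{(4)}), which makes your argument self-contained.
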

\begin{proof}
$(i)\Longleftrightarrow(ii)$ For any primitive pure-type $(n-1,1)$-form $\alpha$, we have:
$$
\begin{array}{ll}
d\alpha=0 &\Longleftrightarrow \partial\alpha=0 \hspace{1.5ex} \mbox{ and }  \hspace{1.5ex} \bar{\partial}\alpha=0 \\
&\overset{(a)}{\Longleftrightarrow} \bar{\partial}^*\alpha=0, \hspace{1.5ex} \partial^*\alpha=0  \hspace{1.5ex} \mbox{ and }  \hspace{1.5ex} \partial\bar{\partial}\alpha=0\\
&\overset{(b)}{\Longleftrightarrow} \alpha\in \ker \partial^*\cap \ker\bar{\partial}^*\cap \ker \partial\bar{\partial}=\ker\Delta_A
\end{array}
$$
where $(a)$ follows from (\cite{Pop19}, Lemma 4.12) and $(b)$ is (\ref{(1)}).\\
$(ii)\Longleftrightarrow(iii)$ Notice that by (\ref{(5)}), $\alpha$ is $\Delta_A$-harmonic is equivalent to $*\alpha$ is $\Delta_{BC}$-harmonic for any $(p,q)$-form $\alpha$. Moreover, since $\alpha$ is a primitive $(n-1,1)$-form. Then by (\ref{(4)}), $*\alpha$ is $\Delta_{BC}$-harmonic if and only if $\alpha$ is $\Delta_{BC}$-harmonic. As a consequence, being $\Delta_{BC}$-harmonic is equivalent to $\Delta_{A}$-harmonic.
\end{proof}
As a result, if an $(n-1,1)$-form is both primitive and $d$-closed, then it is $\Delta_{A}$-harmonic (and $\Delta_{BC}$-harmonic) form. Therefore, it remains to investigate whether the $\Delta_{A}$-harmonic representative of any primitive $(n-1,1)$-class is a primitive form on a Gauduchon Calabi-Yau manifold $(X,\omega)$.\\

Let $[v\lrcorner u]$ be a primitive $(n-1,1)$-class on $X$ with $[v]\in H^{0,1}(X,T^{1,0}X)$. By definition, we have:
$$ 
\begin{array}{ll}
[v\lrcorner u]\in H^{n-1,1}_{Gprim}(X,\C) &\Longleftrightarrow [v\lrcorner \omega^{n-1}]_A=0\in H^{n-2,n}_A(X,\C)\\
&\Longleftrightarrow v\lrcorner \omega^{n-1}\in \mbox{ Im } \partial+\mbox{ Im }\bar{\partial}.
\end{array}
$$

\noindent Now, let $v\lrcorner u$ be a $\Delta_A$-harmonic (i.e. $\Delta_A \left(v\lrcorner u\right)=0$) $(n-1,1)$-form. By (\cite{Pop19}, Lemma 4.10), $v\lrcorner u$ is primitive for $\omega$ if and only if $v\lrcorner \omega^{n-1}$ vanishes. Furthermore, one has $0=v\lrcorner \omega^{n-1}\in \mbox{ Im } \partial+\mbox{ Im }\bar{\partial}$, while $\ker\Delta_A$ is orthogonal to $\mbox{ Im } \partial+\mbox{ Im }\bar{\partial}$ (by the decomposition (\ref{(1)'})). Thereupon, $v\lrcorner \omega^{n-1}$ is $\Delta_A$-harmonic. Additionally, 
$$
\begin{array}{ll}
v\lrcorner \omega^{n-1}\in \ker\Delta_A &\overset{(a)}{\Longleftrightarrow} v\lrcorner \omega^{n-1}\in \ker \partial^*\cap \ker\bar{\partial}^*\cap \ker \partial\bar{\partial}\\
&\overset{(b)}{\Longleftrightarrow} v\lrcorner \omega^{n-1}\in \ker \partial^*\cap \ker\bar{\partial}^*\\
&\overset{(c)}{\Longleftrightarrow}  *\left( v\lrcorner \omega^{n-1}\right) \in \ker \partial\cap \ker\bar{\partial}\\
&\overset{(d)}{\Longleftrightarrow}  v\lrcorner \omega \in \ker \partial\cap \ker\bar{\partial}
\end{array}
$$
where $(a)$ holds by (\ref{(1)}), $(b)$ is trivial for bidegree reasons ($v\lrcorner \omega^{n-1}$ is of type $(n-2,n)$, then it is $\partial\bar{\partial}$-closed), $(c)$ is obtained by definition of $\partial^*$ and of $\bar{\partial}^*$ (\ref{(3)}) while $(d)$ follows since $v\lrcorner \omega^{n-1}=(n-1)!\, v\lrcorner \omega$ which holds from the fact that $v\lrcorner \omega$ is a primitive $(0,2)$-form and by the identity (\ref{prim form}). Hence,
\begin{equation}\label{obstract}
\Delta_A\left(v\lrcorner \omega^{n-1}\right)=0 \Longleftrightarrow d\left(v\lrcorner \omega \right)=0.
\end{equation}
The condition (\ref{obstract}) can hold if we consider $v\lrcorner \omega$ to be a $d$-closed representative of the Dolbeault (or Aeppli) cohomology class $[v\lrcorner \omega]_{\bar{\partial}}\in H^{0,2}_{\bar{\partial}}(X,\C)$ (or $[v\lrcorner \omega]_{A}\in H^{0,2}_A(X,\C)$). On the other side, the condition $v\lrcorner \omega$ being $\bar{\partial}$-closed $(0,2)$-form is equivalent to $\bar{\partial}\omega=0$, for any $[v]\in H^{0,1}(X,T^{1,0}X)$, which is equivalent to the standard case where $\omega$ is a K\"ahler form. Else, one can see no grounds for which $v\lrcorner \omega $ is $d$-closed even if we consider that $v\lrcorner u$ is $\Delta_A$-harmonic $(n-1,1)$-form.\\

Now, if we drop the $d$-closedness of the primitive representative (i.e. investigate only the existence of a primitive representative of a primitive $(n-1,1)$-class) and suppose that $[v\lrcorner u]\in H^{n-1,1}_{Gprim}(X,\C)$ is a primitive $(n-1,1)$-class. Then the primitivity of $v\lrcorner u $ is equivalent to the existence of a unique smooth vector field $\zeta$ of type $(1,0)$ and a unique primitive smooth $(1,2)$-form $v_0$ such that 
\begin{equation}\label{prim}
\left(v-\bar{\partial}\zeta\right)\lrcorner \omega^{n-1}=\bar{\partial}\left(\omega^{n-1}\wedge v_0\right)-\zeta \lrcorner \bar{\partial}\omega^{n-1}.
\end{equation}
Indeed, notice that we have:
$$ [v\lrcorner u]\in H^{n-1,1}_{Gprim}(X,\C) \hspace*{1.5ex} \Longleftrightarrow \hspace*{1.5ex} [v\lrcorner \omega^{n-1}]_A=0\in H^{n-2,n}_A(X,\C). $$
for any $[v]\in H^{0,1}(X,T^{1,0}X)$. While recall that the map $H^{p,q}_{\bar{\partial}}(X,\C)\longrightarrow H^{p,q}_A(X,\C)$ is an isomorphism on a $\partial\bar{\partial}$-manifold. Then, the vanishing of the Aeppli class $[v\lrcorner \omega^{n-1}]_A$ implies the vanishing of the Dolbeault class $[v\lrcorner \omega^{n-1}]_{\bar{\partial}}$. Thus, $v\lrcorner \omega^{n-1}$ is $\bar{\partial}$-exact, i.e. there exists an $(n-2,n-1)$-form $\alpha$ such that $v\lrcorner \omega^{n-1}=\bar{\partial}\alpha$. As in \cite{Pop19}, $\alpha$ is of the shape:
$$\alpha=\omega^{n-3}\wedge v_0+\zeta\lrcorner \omega^{n-1}, $$
where $v_0$ is a unique primitive $C^\infty$ $(1,2)$-form and $\zeta$ is a unique $C^\infty$ vector field of type $(1,0)$. Applying $\bar{\partial}$ to $\alpha$, one obtains (\ref{prim}). To summerize, since $[v-\bar{\partial}\zeta]\in H^{0,1}(X,T^{1,0}X)$, we have:
$$
\begin{array}{ll}
v\lrcorner u \mbox{ is primitive for } \omega &\Longleftrightarrow v\lrcorner \omega^{n-1}=0\\
&\Longleftrightarrow \bar{\partial}\left(\omega^{n-1}\wedge v_0\right)-\zeta \lrcorner \bar{\partial}\omega^{n-1}=0.
\end{array}
$$
Neverthless, one has no grounds for which the last condition holds even if we exploit the fact that $\bar{\partial}\omega^{n-1}$ vanishes since we have chosen $\omega^{n-1}$  to be the $\omega$-minimal $d$-closed representative of the Aeppli-Gauduchon class $[\omega^{n-1}]_A\in H^{n-1,n-1}_A(X,\C)$. Therefore, on a Gauduchon non-K\"ahler Calabi-Yau $\partial\bar{\partial}$-manifold, a primitive $(n-1,1)$-class may not be represented by a primitive form.
\subsection{Weil-Petersson metrics on $\Delta_{[\omega^{n-1}]_A}$}\label{WP metrics}
Let $X$ be an arbitrary Gauduchon Calabi-Yau $\partial\bar{\partial} $-manifold of dimension $n$. As discussed in the Introduction and in Section (\ref{copolar}), all the fibres $(X_t)_{t\in \Delta}$ are Gauduchon Calabi-Yau $\partial\bar{\partial}$-manifolds as well for $t\in\Delta$ close enough to $0$. The subspace $H^{0,1}(X,T^{1,0}X)_{[\omega^{n-1}]_A}$ is the tangent space of the base space $\Delta_{[\omega^{n-1}]_A}$ of the local universal family $(X_t)_{t\in\Delta_{[\omega^{n-1}]_A}}$ of deformations of $X$ that are co-polarised by the Aeppli-Gauduchon class $[\omega^{n-1}]_A\in H^{n-1,n-1}_A(X,\C)$, namely
$$T_t\Delta_{[\omega^{n-1}]_A}\simeq H^{0,1}(X_t,T^{1,0}X_t)_{[\omega^{n-1}]_A} \simeq H^{n-1,1}_{Gprim}(X_t,\C), \hspace*{3ex} t\in\Delta_{[\omega^{n-1}]_A}.$$
One can define the Weil-Petersson metrics on $\Delta_{[\omega^{n-1}]_A}$ as follows
\begin{Def}\label{def of WP metrics}
Let $(u_t)_{t\in\Delta}$ be a fixed holomorphic family of non-vanishing holomorphic $n$-forms on the fibres $(X_t)_{t\in\Delta}$ and let $(\omega_t)_{t\in\Delta_{[\omega^{n-1}]_A}}$ be a smooth family of Gauduchon metrics on the fibres  $(X_t)_{t\in\Delta_{_{[\omega^{n-1}]_A}}}$ such that $\omega_t^{n-1}\in\{\omega^{n-1}\}$ for any $t$ and $\omega_0=\omega$. {\bf The Weil-Petersson metrics} $g^{(1)}_{WP}$ and $g^{(2)}_{WP}$ are defined on $\Delta_{[\omega^{n-1}]_A}$ by
$$ g^{(1)}_{WP}([v_t],[w_t])\,:=\,\dfrac{\ll v_t,w_t \gg}{\int_{X_t}dV_{\omega_t^{n-1}}}$$
$$ g^{(2)}_{WP}([v_t],[w_t])\,:=\,\dfrac{\ll v_t\lrcorner u_t,w_t\lrcorner u_t \gg}{i^{n^2}\int_{X_t} u_t\wedge \overline{u}_t}$$
for any $t \in \Delta_{_{[\omega^{n-1}]_A}}$, any $v_t\in [v_t]\in  H^{0,1}(X_t,T^{1,0}X_t)_{[\omega^{n-1}]_A}$ such that $v_t\lrcorner u_t$ is the $\omega_t$-minimal $d$-closed representative of the class $[v_t\lrcorner u_t]_A\in  H^{n-1,1}_{A}(X_t,\C)$ and any $w_t\in [w_t]\in  H^{0,1}(X_t,T^{1,0}X_t)_{[\omega^{n-1}]_A}$ such that $w_t\lrcorner u_t$ is the $\omega_t$-minimal $d$-closed representative of the class $[w_t\lrcorner u_t]_A\in  H^{n-1,1}_{A}(X_t,\C)$, while $dV_{\omega_t}:= \dfrac{\omega^{n}}{n!}$ and $\ll.,.\gg$ stands for the $L^2$ scalar product induced by $\omega_t$.
\end{Def}
\noindent The smooth positive definite $(1,1)$-forms on $ \Delta_{_{[\omega^{n-1}]_A}}$ associated with $g^{(1)}_{WP}$ and $g^{(2)}_{WP}$ are denoted by 
$$\tilde{\omega}^{(1)}_{WP}>0 \hspace*{2ex} \mbox{ and } \hspace*{2ex} \tilde{\omega}^{(2)}_{WP}>0  \hspace*{2ex}\mbox { on }\Delta_{_{[\omega^{n-1}]_A}}. $$
Similarly to the balanced case (cf, \cite{Pop19}, Lemma 3.2), if the Gauduchon metrics can be chosen such that $Ric(\omega_t)=0$ for all $t\in \Delta_{_{[\omega^{n-1}]_A}}$, then 
$$\ll v_t\lrcorner u_t,w_t\lrcorner u_t \gg \, = \, \ll v_t,w_t \gg. $$ 
Thus $$\tilde{\omega}^{(1)}_{WP}\,=\,\tilde{\omega}^{(2)}_{WP}\hspace*{2ex}\mbox { on }\Delta_{_{[\omega^{n-1}]_A}}.$$
Now, let $\gamma$ be the period map metric on $\Delta$ defined  in \cite{Pop19}. The K\"ahler metric  $\gamma$  is independent of the choice of any metrics on $(X_t)_{t\in\Delta}$ and given by:
$$
\gamma_t([v_t],[v_t])=\left\{
\begin{array}{rl}
\dfrac{-\int_X (v_t\lrcorner u_t)\wedge \overline{(v_t\lrcorner u_t)}}{i^{n^2}\int_X u_t\wedge \overline{u}_t}, & \mbox{ if } n \mbox{ is even }\\
\\
\dfrac{-i\int_X (v_t\lrcorner u_t)\wedge \overline{(v_t\lrcorner u_t)}}{i^{n^2}\int_X u_t\wedge \overline{u}_t}, & \mbox{ if } n \mbox{ is odd }
\end{array}
\right.
$$
for every $[v_t]\in H^{0,1}(X,T^{1,0}X) $ and every $t\in \Delta$. Furthermore, $\gamma_t([v_t],[v_t])$ is independent of the choice of representative $v_t$ in the class $[v_t]\in H^{0,1}(X,T^{1,0}X)$ such that $v_t\lrcorner u_t$ is $d$-closed and independent of the choice of holomorphic family $(u_t)_{t\in\Delta}$ of $J_t$-holomorphic $n$-forms. \\

To compare the Weil-Petersson metric $\tilde{\omega}^{(2)}_{WP}$ with the period map metric $\gamma$ on $\Delta_{[\omega^{n-1}]_A}$, one needs the following
\begin{Prop}
Suppose that $X$ is a compact Gauduchon Calabi-Yau $\partial\bar{\partial}$-manifold with $\dim_\C X=n$. For any $t\in\Delta_{[\omega^{n-1}]_A}$ on the open subset of local deformations of $X$ co-polarised by a given Aeppli-Gauduchon class $[\omega^{n-1}]_A\in H^{n-1,n-1}_A(X,\C)\subset H^{2n-2}(X,\C)$, the metrics $\gamma$ and $g^{(2)}_{WP}$ are stated as follows:
$$
\begin{array}{ll}
& g^{(2)}_{WP}([v_t],[v_t])= \dfrac{\parallel v'_t\lrcorner u_t \parallel^2+2\parallel \zeta_t\parallel^2}{i^{n^2}\int_X u_t\wedge \overline{u}_t}\\
&\\
& \gamma_t([v_t],[v_t])= \dfrac{\parallel v'_t\lrcorner u_t \parallel^2-2\parallel \zeta_t\parallel^2}{i^{n^2}\int_X u_t\wedge \overline{u}_t}
\end{array}
$$
with $v_t\in [v_t]\in H^{0,1}(X_t,T^{1,0}X_t)_{[\omega^{n-1}]_A}$ such that $v_t\lrcorner u_t$ is the $\omega_t$-minimal $d$-closed representative of the class $[v_t\lrcorner u_t]\in H^{n-1,1}_A(X_t,\C)$ and $\omega_t\in\{\omega^{n-1}\}$ are Gauduchon metrics in the co-polarising Aeppli-Gauduchon class given previously, while $v'_t\lrcorner u_t$ is a primitive $(n-1,1)$-form and $\zeta_t$ is an $(n-2,0)$-form such that $v_t\lrcorner u_t=v'_t\lrcorner u_t+\omega\wedge\zeta_t$ given by the Lefschetz decomposition in [\cite{Voi02}, Proposition 6.22, page 147].
\end{Prop}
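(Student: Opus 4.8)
The plan is to reduce both metrics to the $L^2$-norms of the two pieces of the Lefschetz decomposition of $\psi:=v_t\lrcorner u_t$, and then to match the resulting powers of $i$ against the normalising factor $i^{n^2}\int_X u_t\wedge\overline{u}_t$ separately in the even and odd cases. Throughout I write the Lefschetz decomposition of the $(n-1,1)$-form $\psi$ as $\psi=\psi'+\omega\wedge\zeta_t$, where $\psi'=v'_t\lrcorner u_t$ is primitive of (top) degree $n$ and $\zeta_t$ is a $(n-2,0)$-form, which is automatically primitive for bidegree reasons. Since $v_t\lrcorner u_t$ is taken to be the $\omega_t$-minimal $d$-closed representative, the norms computed below are those of fixed forms, so all the work is pointwise plus one integration.

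First, for $g^{(2)}_{WP}$ I would use that the Lefschetz decomposition is pointwise orthogonal, hence orthogonal for $\ll\cdot,\cdot\gg$, giving $\|\psi\|^2=\|\psi'\|^2+\|\omega\wedge\zeta_t\|^2$. For the second summand I pass to the adjoint $\Lambda$ of $L=\omega\wedge\cdot$: as $\zeta_t$ is primitive of degree $n-2$, the commutation relation $[\Lambda,L]=(n-k)\,\mathrm{Id}$ on $k$-forms gives $\Lambda L\zeta_t=2\zeta_t$, whence $\|\omega\wedge\zeta_t\|^2=\ll\zeta_t,\Lambda L\zeta_t\gg=2\|\zeta_t\|^2$. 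Dividing $\|\psi'\|^2+2\|\zeta_t\|^2$ by $i^{n^2}\int_X u_t\wedge\overline{u}_t$ yields the first formula.

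For $\gamma_t$ I would expand $\int_X\psi\wedge\overline{\psi}$ along the decomposition. Each of the two cross terms contains either $\omega\wedge\psi'$ or $\omega\wedge\overline{\psi'}$, both of which vanish because $\psi'$ is primitive of top degree $n$ (i.e. $\omega^{\,n-n+1}\wedge\psi'=0$), so $\int_X\psi\wedge\overline{\psi}=\int_X\psi'\wedge\overline{\psi'}+\int_X\omega^2\wedge\zeta_t\wedge\overline{\zeta_t}$. I then convert each wedge-integral to an $L^2$-norm via the primitive Hodge-star formula (\ref{prim form}): applied to $\overline{\psi'}$ (type $(1,n-1)$) it gives $*\overline{\psi'}=i^{\,n^2+2}\,\overline{\psi'}$, so $\int_X\psi'\wedge\overline{\psi'}=i^{-n^2-2}\|\psi'\|^2=-\,i^{-n^2}\|\psi'\|^2$; applied to $\overline{\zeta_t}$ (type $(0,n-2)$, hence with the factor $\omega^2/2!$) it gives $\int_X\omega^2\wedge\zeta_t\wedge\overline{\zeta_t}=2\,i^{\,n^2-2n}\|\zeta_t\|^2$.

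The last step is the sign bookkeeping, which I expect to be the main obstacle. Separating parities, when $n$ is even one has $i^{n^2}=1$, $i^{-n^2}=1$ and $i^{\,n^2-2n}=1$, while when $n$ is odd one has $i^{n^2}=i$, $i^{-n^2}=-i$ and $i^{\,n^2-2n}=-i$; substituting these into $-\int_X\psi\wedge\overline{\psi}$ for $n$ even and into $-i\int_X\psi\wedge\overline{\psi}$ for $n$ odd collapses both numerators to $\|\psi'\|^2-2\|\zeta_t\|^2$, which over $i^{n^2}\int_X u_t\wedge\overline{u}_t$ is the second formula. The delicate point is exactly the opposite sign in front of $2\|\zeta_t\|^2$ relative to the Weil--Petersson case: it is forced by the extra power $i^{-2}=-1$ that $*\overline{\zeta_t}$ carries compared with $*\overline{\psi'}$ (because the former involves $\omega^2$ and the latter $\omega^0$), and carrying this consistently through both normalisations is where care is required.
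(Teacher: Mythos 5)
Your proof is correct --- the Lefschetz decomposition $v_t\lrcorner u_t=v'_t\lrcorner u_t+\omega\wedge\zeta_t$, the pointwise identity $\Lambda L\zeta_t=2\zeta_t$ for the primitive $(n-2,0)$-form $\zeta_t$, the vanishing of the cross terms via $\omega\wedge(v'_t\lrcorner u_t)=0$, and the powers of $i$ from (\ref{prim form}) all check out in both parities of $n$, producing the sign flip $+2\parallel\zeta_t\parallel^2$ versus $-2\parallel\zeta_t\parallel^2$. This is essentially the same approach the paper relies on: it gives no proof of its own, deferring to the argument of (\cite{Pop19}, Theorem 5.10), which is exactly this computation.
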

\noindent The above proposition is a similar finding to (\cite{Pop19}, Theorem 5.10) and both can be proved in a similar way.\\

As a consequence, the Hermitian metric $\tilde{\omega}^{(2)}_{WP}$ on $\Delta_{[\omega^{n-1}]_A}$ defined by $g^{(2)}_{WP}$ is bounded below by the K\"ahler metric $\gamma$ as shown in the following
\begin{Cor}\label{comparison}
For every $[v_t]\in H^{0,1}(X_t,T^{1,0}X_t)_{[\omega^{n-1}]_A}\setminus\{0\}$, 
$$\left(g^{(2)}_{WP}-\gamma \right)\left( [v_t],[v_t]\right)=\dfrac{4 \parallel \zeta_t\parallel^2}{i^{n^2}\int_X u_t\wedge \overline{u}_t}\geq 0, \hspace*{3ex} t\in\Delta_{[\omega^{n-1}]_A}. $$
\end{Cor}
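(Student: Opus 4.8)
The plan is to obtain the identity as an immediate algebraic consequence of the two explicit formulas supplied by the preceding Proposition, since that Proposition already isolates the primitive part $v'_t\lrcorner u_t$ and the piece $\zeta_t$ coming from the Lefschetz decomposition $v_t\lrcorner u_t=v'_t\lrcorner u_t+\omega\wedge\zeta_t$. First I would fix $[v_t]\in H^{0,1}(X_t,T^{1,0}X_t)_{[\omega^{n-1}]_A}\setminus\{0\}$, choose the representative $v_t$ so that $v_t\lrcorner u_t$ is the $\omega_t$-minimal $d$-closed representative of $[v_t\lrcorner u_t]_A$, and write down the two expressions
$$ g^{(2)}_{WP}([v_t],[v_t])= \dfrac{\parallel v'_t\lrcorner u_t \parallel^2+2\parallel \zeta_t\parallel^2}{i^{n^2}\int_X u_t\wedge \overline{u}_t}, \qquad \gamma_t([v_t],[v_t])= \dfrac{\parallel v'_t\lrcorner u_t \parallel^2-2\parallel \zeta_t\parallel^2}{i^{n^2}\int_X u_t\wedge \overline{u}_t}. $$
Both fractions carry the identical denominator $i^{n^2}\int_X u_t\wedge\overline{u}_t$, so subtracting $\gamma_t$ from $g^{(2)}_{WP}$ is a term-by-term operation: the common summand $\parallel v'_t\lrcorner u_t\parallel^2$ cancels and the $\zeta_t$-terms combine as $2\parallel\zeta_t\parallel^2-(-2\parallel\zeta_t\parallel^2)=4\parallel\zeta_t\parallel^2$, which produces exactly the asserted expression.

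It then remains only to check the sign. The numerator $4\parallel\zeta_t\parallel^2$ is four times a squared $L^2$-norm with respect to $\omega_t$, hence nonnegative, and strictly positive unless $\zeta_t=0$. For the denominator I would recall the standard fact that, for a nowhere-vanishing holomorphic $n$-form $u_t$ on the compact complex $n$-manifold $X_t$, the top-degree form $i^{n^2}\,u_t\wedge\overline{u}_t$ is a positive volume form for the natural orientation, so its integral over $X_t$ is a strictly positive real number. This is precisely the normalisation already built into the definitions of $g^{(2)}_{WP}$ and $\gamma$, so no new convention is introduced. Combining the two observations gives $\left(g^{(2)}_{WP}-\gamma\right)([v_t],[v_t])\geq 0$, with equality exactly when $\zeta_t=0$, i.e. when $v_t\lrcorner u_t$ is itself primitive.

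I do not expect any genuine obstacle here: the entire mathematical content has been discharged in the preceding Proposition, and the Corollary reduces to a one-line difference of the two formulas together with the positivity of the Calabi--Yau volume form. The only point deserving an explicit remark is that the two denominators coincide literally, so that the subtraction is legitimate term by term; this is immediate from the statements. Consequently the proof is purely formal, and I would present it simply as the computation of the difference followed by the positivity argument for the resulting quotient.
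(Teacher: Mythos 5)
Your proposal is correct and matches the paper's own route: the paper states Corollary \ref{comparison} as an immediate consequence of the preceding Proposition, obtained exactly as you do by subtracting the two formulas over the common denominator $i^{n^2}\int_X u_t\wedge \overline{u}_t$, with nonnegativity following from the positivity of this Calabi--Yau volume normalisation. Your added remarks on the equality case ($\zeta_t=0$, i.e.\ $v_t\lrcorner u_t$ primitive) and the strict positivity of the denominator are consistent with the paper's surrounding discussion.
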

Eventually, on a Gauduchon (non-K\"ahler) Calabi-Yau $\partial\bar{\partial}$-manifold, the metric $\tilde{\omega}^{(2)}_{WP}$ can coincide with the K\"ahler metric $\gamma$ on $\Delta_{[\omega^{n-1}]_A}$ if the primitive $(n-1,1)$-class $[v_t\lrcorner u_t]\in H^{n-1,1}_{Gprim}(X,\C)$ can be represented by a form that is both primitive and $d$-closed.

\section{Deformations of $p$-SKT $h$-$\partial\bar{\partial}$-manifolds}
We will start this section by recalling some notions that we will need in the sequel.

\begin{Def}\label{recall defn}
Let $X$ be a compact complex manifold with $\dim_\C X= n$. For every $p=0,\cdots,n$, let $\Omega$ be a smooth strictly weakly positive (see e.g. [\cite{Dem}, Chapter III] or [\cite{B19}, Definition 2.1])  $(p,p)$-form and $\omega$ a positive-definite $C^\infty$ $(1,1)$-form on $X$.
\begin{enumerate}
\item  $\Omega$ is said to be {\it $p$-SKT (or $p$-pluriclosed)} if $\partial\bar{\partial}\Omega=0$.
\item $\Omega$ is called {\it $p$-Hermitian symplectic ($p$-HS) form} if there exist $\Omega^{i,2p-i}\in C^\infty_{i,2p-i}(X,\C)$, $i\in\{0,\cdots,p-1\}$, such that
$$d\left(\sum_{i=0}^{p-1}\Omega^{i,2p-i}+\Omega+\sum_{i=0}^{p-1}\overline{\Omega^{i,2p-i}}\right)=0. $$
\item  $\omega$ is called {\it strongly Gauduchon (sG)} (cf. \cite{Pop09}) if $\partial\omega^{n-1}$ is $\bar{\partial}$-exact.
\item  X is called {\it $p$-SKT, $p$-HS and  sG manifold} if there exists a {\it $p$-SKT form, $p$-HS form and sG metric} respectively on $X$.
\item Let $h\in\R\setminus{\{0\}}$  be an arbitrary constant. A compact complex manifold $X$ with $dim_\C X = n$ is said to be an $h$-$\partial\bar{\partial}$-manifold (\cite{BP18}, Definition 1.5) if $X$ satisfies the $h$-$\partial\bar{\partial}$-lemma, i.e. for
every $k =0,1,\cdots,2n$ and every $k$-form $u\in\ker d_h \cap \ ker d_{-h^{-1}}$ , the following exactness conditions are equivalent:
$$u \in \mbox { Im }d_h \Longleftrightarrow u \in \mbox { Im }d_{-h^{-1}} \Longleftrightarrow u \in \mbox { Im } d \Longleftrightarrow u \in \mbox { Im } (d_h d_{-h^{-1}}) = Im (\partial\bar{\partial}).$$
\end{enumerate}
\end{Def}
 
Now, one defines some other new notions that we call {\it $hp$-Hermitian symplectic ($hp$-HS) form} and {\it $h$-strongly Gauduchon ($h$-sG) metric } on a compact complex manifold of complex dimension $n$.

\begin{Def}\label{hpHS defn}
Let $X$ be a compact complex manifold of dimension $n$ and $\omega$ a positive-definite $C^\infty$ $(1,1)$-form on $X$. For any $p=0,\cdots,n$, let $\Omega$ be a smooth strictly weakly positive  $(p,p)$-form on $X$. For every $h\in\mathbb{R}\setminus{\{0\}}$,

\begin{enumerate}
\item $\omega$ is called {\bf $h$-strongly Gauduchon ($h$-sG) metric } if there exists $\Omega^{n-2,n}\in C^\infty_{n-2,n}(X,\C)$ such that 
$$ d_h\bigg(\dfrac{1}{h}\Omega^{n-2,n}+\omega^{n-1}+h\overline{\Omega^{n-2,n}}\bigg)=0 .$$
\item $\Omega$ is called {\bf $hp$-Hermitian symplectic ($hp$-HS) form} if there exists $\Omega^{i,2p-i}\in C^\infty_{i,2p-i}(X,\C)$ with $i=0,\cdots,p-1$ such that 
$$ d_h\bigg(\sum_{i=0}^{p-1}\Omega^{i,2p-i}+\Omega+\sum_{i=0}^{p-1}\overline{\Omega^{i,2p-i}}\bigg)=0 .$$
\item $X$ is said to be {\bf $h$-sG (resp. $hp$-HS) manifold}  if there exists an {\bf $h$-sG metric (resp. $hp$-HS form)} on $X$.
\end{enumerate}
\end{Def}

\begin{Obs}Let $X$ be a compact complex manifold with $\dim_\C X=n$. For every $h\in \R\setminus{\{0\}}$, suppose that $\Omega$ is an $hp$-HS form on $X$. In the case where $p=n-1$, $\omega$ is either balanced or sG metric on $X$ where $\omega$ is a positive-definite $C^\infty$ $(1,1)$-form on $X$. 
\end{Obs}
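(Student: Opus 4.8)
The plan is to reduce the $hp$-HS condition at $p=n-1$ to a statement purely about $\omega^{n-1}$ by exploiting bidegree constraints, and then to read off balancedness or the sG property from the two surviving components of the defining equation. First I would identify $\Omega$ with a power of a metric: since $\Omega$ is a strictly weakly positive $(n-1,n-1)$-form, by Michelsohn's theorem \cite{Mic82} (already invoked earlier in the paper) there is a unique positive-definite $(1,1)$-form $\omega$ with $\Omega=\omega^{n-1}$. Next I would observe that for $p=n-1$ the auxiliary sum $\sum_{i=0}^{p-1}\Omega^{i,2p-i}$ collapses: a term $\Omega^{i,2n-2-i}$ is a form of type $(i,2n-2-i)$, and $2n-2-i\le n$ forces $i\ge n-2$, while $i\le p-1=n-2$; hence only $i=n-2$ survives, and the $hp$-HS datum reduces to a single $(n-2,n)$-form $\Omega^{n-2,n}$. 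The defining condition thus becomes
$$ d_h\bigl(\Omega^{n-2,n}+\omega^{n-1}+\overline{\Omega^{n-2,n}}\bigr)=0. $$

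Then I would expand $d_h=h\partial+\bar{\partial}$ and split by bidegree. Since $\bar{\partial}\Omega^{n-2,n}$ and $\partial\overline{\Omega^{n-2,n}}$ vanish for bidegree reasons, only the components of type $(n-1,n)$ and $(n,n-1)$ remain, giving the pair
$$\bar{\partial}\omega^{n-1}=-h\,\partial\Omega^{n-2,n}, \qquad h\,\partial\omega^{n-1}=-\bar{\partial}\overline{\Omega^{n-2,n}}.$$
The second identity already exhibits $\partial\omega^{n-1}$ as $\bar{\partial}$-exact, which is precisely the sG property (cf. Definition \ref{recall defn}). To isolate the balanced case I would compare this with the conjugate of the first identity: using that $\omega^{n-1}$ is real and $h$ is real one obtains $\partial\omega^{n-1}=-h\,\bar{\partial}\overline{\Omega^{n-2,n}}$, and subtracting this from the second identity yields $(h-h^{-1})\,\bar{\partial}\overline{\Omega^{n-2,n}}=0$. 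Hence for $h\neq\pm1$ one has $\partial\omega^{n-1}=0$, so (conjugating) $d\omega^{n-1}=0$ and $\omega$ is balanced; for $h=\pm1$ the two identities coincide and one retains only that $\partial\omega^{n-1}$ is $\bar{\partial}$-exact, i.e. $\omega$ is sG. This gives the claimed dichotomy.

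The only genuinely delicate point is the bookkeeping with complex conjugation: one must use the reality of $\omega^{n-1}$ (and of the constant $h$) to convert the type-$(n-1,n)$ equation into a constraint directly comparable with the type-$(n,n-1)$ equation, and thereby detect when the auxiliary form $\overline{\Omega^{n-2,n}}$ is forced to be $\bar{\partial}$-closed. Everything else is the routine bidegree truncation of the $hp$-HS sum and the invocation of Michelsohn's representation $\Omega=\omega^{n-1}$; no analytic input beyond these is required.
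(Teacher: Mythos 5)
Your proof is correct and takes essentially the same route as the paper's: Michelsohn's theorem to write $\Omega=\omega^{n-1}$, the bidegree splitting of $d_h\bigl(\Omega^{n-2,n}+\omega^{n-1}+\overline{\Omega^{n-2,n}}\bigr)=0$ into the $(n-1,n)$ and $(n,n-1)$ equations, and conjugation (using that $\omega^{n-1}$ and $h$ are real) to obtain $(h-h^{-1})\,\bar{\partial}\overline{\Omega^{n-2,n}}=0$, giving balanced for $h\neq\pm1$ and sG for $h=\pm1$. Your explicit truncation of the sum $\sum_{i=0}^{p-1}\Omega^{i,2p-i}$ to the single surviving term $\Omega^{n-2,n}$, and your remark that the type-$(n,n-1)$ equation already makes $\partial\omega^{n-1}$ $\bar{\partial}$-exact (hence $\omega$ sG) for every $h\neq 0$, are minor clarifications the paper leaves implicit, but the argument is the same.
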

\begin{proof}
Let $\Omega$ be a smooth strictly weakly positive $(n-1,n-1)$-form. Note that the notion of strict weak positivity and the usual positivity definiteness for $(1,1)$-form and $(n-1,n-1)$-form are equivalent \cite{Dem}, so there exists a unique smooth $(1, 1)$-form 
 $\omega> 0$ (c.f. \cite{Mic82}) on $X$ such that $\Omega=\omega^{n-1}$.
 
\noindent Suppose that $\Omega$ is an $hp$-HS form on $X$ and $p=n-1$. So, there exists $\Omega^{n-2,n}\in C^\infty_{n-2,n}(X,\C)$ such that 
$$d_h(\Omega^{n-2,n}+\omega^{n-1}+\overline{\Omega^{n-2,n}})=0 \hspace{2ex} \Longrightarrow \hspace{2ex} \partial\omega^{n-1}=-\dfrac{1}{h} \bar{\partial} \overline{\Omega^{n-2,n}} \, \mbox{ and } \, \bar{\partial}\omega^{n-1}=-h\partial\Omega^{n-2,n}, \hspace{2ex} \forall h\in\R\setminus{\{0\}}. $$
By conjugation, it is clear that $\bar{\partial}\omega^{n-1}= -h\partial\Omega^{n-2,n}=-\dfrac{1}{h}\partial\Omega^{n-2,n}$. This implies that $ (h-\dfrac{1}{h})\partial\Omega^{n-2,n}=0.$\\
If $h\neq \dfrac{1}{h}$, then $\partial\omega^{n-1}=0$ and $\bar{\partial}\omega^{n-1}=0$. Therefore $d\omega^{n-1}=0$, i.e. $\omega$ is a {\it balanced metric} on $X$. \\
If $\partial\Omega^{n-2,n}\neq 0$, we must have $h=\{1,-1\}$. when $h=1$, $d_1=d$. So $d(\Omega^{n-2,n}+\omega^{n-1}+\overline{\Omega^{n-2,n}})=0$. Thus $\omega$ is an sG metric on $X$. While when $h=-1$, we have
$$
\begin{array}{ll}
d_{-1}(\Omega^{n-2,n}+\omega^{n-1}+\overline{\Omega^{n-2,n}})=0 & \Longrightarrow -\partial\Omega^{n-2,n}-\partial\omega^{n-1}+\bar{\partial}\omega^{n-1}+\bar{\partial}\overline{\Omega^{n-2,n}}=0\\
&\Longrightarrow \partial\omega^{n-1}\in \mbox{ Im }\bar{\partial}.
\end{array}
$$
It follows that if $h=\{1,-1\}$, then $\omega$ is an sG metric on $X$.\\
The proof is complete.
\end{proof}

 In the following result, we indicate that the notions of $h$-sG and sG are equivalent on any compact complex manifold of complex dimension $n$.

\begin{Prop}\label{relations}
Let $X$ be a compact complex manifold with $\dim_{\C} X=n$. Let $h\in \R\setminus{\{0\}}$ and suppose that $\omega$ is a Hermitian metric on $X$.
\begin{enumerate}
\item $X$ is an $h$-sG manifold if and only if $X$ is an sG manifold.
\item If $\omega$ is an $h$-sG metric on $X$, then
$$ (i)\hspace{1ex} d_h\omega^{n-1}\in \mbox{ Im }d_{-h^{-1}}; \hspace{1cm} (ii)\hspace{1ex} d_{-h^{-1}}\omega^{n-1}\in \mbox{ Im }d_h.$$
\item Suppose that $ d_{-h^{-1}}\omega^{n-1}$ is $d_h$-exact and that the following special case of the $h$-$\partial\bar{\partial}$-lemma:
\begin{equation}\label{hddbarlem}
\forall u\in \ker\,d_h \cap \ker\,  d_{-h^{-1}}; \hspace*{1cm} u\in \mbox{ Im } d_{-h^{-1}} \Longrightarrow u\in \mbox{ Im } d_h d_{-h^{-1}}
\end{equation}
holds. Then $\omega$ is an $h$-sG metric on $X$.
\end{enumerate}
\end{Prop}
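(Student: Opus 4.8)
The plan is to reduce all three parts to a single explicit bidegree expansion of the defining $d_h$-closedness condition. Writing $d_h=h\partial+\bar{\partial}$ and observing that $\bar{\partial}\Omega^{n-2,n}=0$ and $\partial\overline{\Omega^{n-2,n}}=0$ for bidegree reasons, I would expand
$$d_h\Big(\tfrac1h\Omega^{n-2,n}+\omega^{n-1}+h\,\overline{\Omega^{n-2,n}}\Big)=0$$
and separate it into its $(n-1,n)$- and $(n,n-1)$-components. This yields the two relations
$$\bar{\partial}\omega^{n-1}=-\partial\Omega^{n-2,n},\qquad \partial\omega^{n-1}=-\bar{\partial}\,\overline{\Omega^{n-2,n}},$$
which are complex conjugates of one another because $\omega^{n-1}$ is real. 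The content of the $h$-sG condition is thus exactly that $\partial\omega^{n-1}\in\mbox{ Im }\bar{\partial}$, with no dependence on $h$ at all.

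For part (1) this observation is essentially the whole argument: the displayed relation is precisely the sG condition $\partial\omega^{n-1}\in\mbox{ Im }\bar{\partial}$, so an $h$-sG metric is an sG metric, and conversely, given an sG metric with $\partial\omega^{n-1}=\bar{\partial}\Theta$ I would set $\overline{\Omega^{n-2,n}}:=-\Theta$, conjugate to recover the second relation, and reassemble the two into the $d_h$-closed combination. As the same metric witnesses both properties, the manifold-level equivalence follows. For part (2) I would feed the two relations back into $d_h\omega^{n-1}=h\partial\omega^{n-1}+\bar{\partial}\omega^{n-1}$ and search for an explicit $d_{-h^{-1}}$-primitive; a short computation using the same bidegree vanishing gives
$$d_h\omega^{n-1}=d_{-h^{-1}}\big(h\,\Omega^{n-2,n}-h\,\overline{\Omega^{n-2,n}}\big),$$
which is (i), and symmetrically $d_{-h^{-1}}\omega^{n-1}=d_h\big(-h^{-1}\Omega^{n-2,n}+h^{-1}\overline{\Omega^{n-2,n}}\big)$, which is (ii).

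For part (3) the plan is to invoke the special $h$-$\partial\bar{\partial}$-lemma (\ref{hddbarlem}) applied to $u:=d_{-h^{-1}}\omega^{n-1}$. I would first record the two structural identities $d_h^2=d_{-h^{-1}}^2=0$ and $d_hd_{-h^{-1}}=(h+h^{-1})\partial\bar{\partial}$, noting that $h+h^{-1}\neq0$ for real $h\neq0$, so that $\mbox{ Im }(d_hd_{-h^{-1}})=\mbox{ Im }(\partial\bar{\partial})$. Then $u\in\mbox{ Im }d_{-h^{-1}}$ trivially, $d_{-h^{-1}}u=0$ since $d_{-h^{-1}}^2=0$, and $d_hu=0$ because $u$ is assumed $d_h$-exact and $d_h^2=0$; hence $u\in\ker d_h\cap\ker d_{-h^{-1}}$, and (\ref{hddbarlem}) places $u\in\mbox{ Im }(\partial\bar{\partial})$. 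The final step decomposes this by bidegree: its $(n,n-1)$-component gives $\partial\omega^{n-1}=\partial\bar{\partial}\Gamma$ (up to the nonzero constant $h+h^{-1}$) for a suitable $(n-1,n-2)$-form $\Gamma$, and rewriting $\partial\bar{\partial}\Gamma=\bar{\partial}(-\partial\Gamma)$ exhibits $\partial\omega^{n-1}$ as $\bar{\partial}$-exact, i.e. the sG condition; by part (1) the metric $\omega$ is then $h$-sG.

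The main obstacle, and the only place where the hypotheses do genuine work, is verifying $u\in\ker d_h$: a direct computation gives $d_hu=(h+h^{-1})\partial\bar{\partial}\omega^{n-1}$, which does not vanish for a general metric, so this step truly relies on the assumed $d_h$-exactness of $u$ (and incidentally forces $\omega$ to be Gauduchon). Everything else is bookkeeping with bidegrees together with the two structural identities for $d_h$; the single point requiring care is keeping the constants $h$, $h^{-1}$ and $h+h^{-1}$ straight when passing between $d_h$- and $d_{-h^{-1}}$-primitives.
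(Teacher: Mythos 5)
Your proposal is correct and follows essentially the same route as the paper's proof: the bidegree expansion of the $d_h$-closedness condition into the conjugate pair $\partial\omega^{n-1}=-\bar{\partial}\overline{\Omega^{n-2,n}}$, $\bar{\partial}\omega^{n-1}=-\partial\Omega^{n-2,n}$ for part (1), the identical explicit primitive $d_h\omega^{n-1}=d_{-h^{-1}}\bigl(h\Omega^{n-2,n}-h\overline{\Omega^{n-2,n}}\bigr)$ for part (2), and the application of the special lemma to $u=d_{-h^{-1}}\omega^{n-1}$ followed by a bidegree splitting of the $\partial\bar{\partial}$-primitive for part (3). The only cosmetic deviations are that the paper proves the converse in (1) via the pointwise isomorphism $\theta_h$ applied to the $d$-closed $(2n-2)$-form (rather than your direct reassembly with coefficients $\frac{1}{h}$ and $h$), and in (3) it reconstructs the $d_h$-closed form $h\partial\overline{\alpha^{n-2,n-1}}+\omega^{n-1}+\frac{1}{h}\bar{\partial}\alpha^{n-2,n-1}$ explicitly instead of concluding sG and citing part (1) as you do; neither changes the substance.
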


\begin{proof}
\begin{enumerate}
\item Suppose that $X$ is an $h$-sG manifold. There exists an $h$-sG metric $\omega$ on $X$, i.e. $\exists \Omega^{n-2,n}\in C^\infty_{n-2,n}(X,\C)$ such that 
$$d_h(\dfrac{1}{h}\Omega^{n-2,n}+\omega^{n-1}+h\overline{\Omega^{n-2,n}})=0, \hspace{2ex} \mbox{ for any } h\in\R\setminus{\{0\}}.$$
Replacing $d_h=h\partial+\bar{\partial}$ in the above equation and computing, one obtains
$$ \partial\Omega^{n-2,n}+h\partial\omega^{n-1}+\bar{\partial}\omega^{n-1}+h\bar{\partial}\overline{\Omega^{n-2,n}} =0$$
This means that $\partial\omega^{n-1}=-\bar{\partial}\overline{\Omega^{n-2,n}}$ for any real non-zero constant $h$. Therefore, $\omega$ is an sG metric on $X$, so $X$ is an sG manifold.\\

\noindent Conversely, consider that there exists a smooth (n-2,n)-form $\Omega^{n-2,n}$ such that $\Omega=\Omega^{n-2,n}+\omega^{n-1}+\overline{\Omega^{n-2,n}}$ is a $d$-closed $(2n-2)$-form on $X$. By the isomorphism (\ref{DRdh}), we have $d_h(\theta_h \Omega)=0$ for any real non-zero constant $h$, i.e.
$$d_h(h^{n-2}\Omega^{n-2,n}+h^{n-1}\omega^{n-1}+h^{n}\overline{\Omega^{n-2,n}})=0 $$
It follows that $\dfrac{1}{h}\Omega^{n-2,n}+\omega^{n-1}+h\overline{\Omega^{n-2,n}}$ is $d_h$-closed (since $h\neq 0$). Thus $X$ is an $h$-sG manifold.
\item  $(ii)$ follows from (i) by replacing $h$ with $-\dfrac{1}{h}$. It suffices to prove $(i)$ for an arbitrary $h\neq 0$.\\
Let $\omega$ be an $h$-sG metric on $X$. Then
$$
\begin{array}{ll}
d_h\omega^{n-1}&=h\partial\omega^{n-1}+\bar{\partial} \omega^{n-1}=-h\bar{\partial}\overline{\Omega^{n-2,n}}-\partial\Omega^{n-2,n}= -\bar{\partial}(h\overline{\Omega^{n-2,n}})-\dfrac{1}{h}\partial(h\Omega^{n-2,n})\\
&=d_{-\frac{1}{h}}(h\Omega^{n-2,n}-h\overline{\Omega^{n-2,n}}).
\end{array}
$$
\noindent We conclude that $d_h\omega^{n-1}$ is $d_{-\frac{1}{h}}$-exact for any real non-zero constant $h$ if $\omega$ is an $h$-sG metric on $X$.
\item  By assumption, $d_{-h^{-1}}\omega^{n-1}$ is $d_h$-exact. So $d_{-h^{-1}}\omega^{n-1}$ is $d_h$-closed, $d_{-h^{-1}}$-closed and $d_{-h^{-1}}$-exact. It follows that $d_{-h^{-1}}\omega^{n-1}$ is $d_hd_{-h^{-1}}$-exact or equivalently $\partial\bar{\partial}$-exact, i.e. there exists an $(2n-3)$-form $\alpha$ such that 
\begin{eqnarray*}
d_{\frac{-1}{h}}\omega^{n-1}&=&-\partial\bar{\partial}\alpha\overset{(1)}{=}-\partial\bar{\partial}(\alpha^{n-3,n}+\alpha^{n-2,n-1}+\alpha^{n-1,n-2}+\alpha^{n,n-3})\\
&\overset{(2)}{=}&-\partial\bar{\partial}(\alpha^{n-2,n-1}+\alpha^{n-1,n-2})\\
&\overset{(3)}{=}&-\frac{1}{h}\partial\omega^{n-1}+\bar{\partial}\omega^{n-1}
\end{eqnarray*}
where $(1)$ is the splitting of the $(2n-3)$-form $\alpha$ into pure-type, $(2)$ is obtained since we have $\bar{\partial}\alpha^{n-3,n}=0$ and $\partial\alpha^{n,n-3}=0$ (for bidegree reasons), while $(3)$ is the definition of $d_{-h^{-1}}\omega^{n-1}$. One obtains:
$$
\left\{
\begin{array}{rl}
-\dfrac{1}{h}\partial\omega^{n-1}=-\partial\bar{\partial}\alpha^{n-1,n-2}\\
\bar{\partial}\omega^{n-1}=-\partial\bar{\partial}\alpha^{n-2,n-1}
\end{array}
\right.
$$
Hence the latter equation implies, by conjugation, that $\partial\omega^{n-1}=-\bar{\partial}\partial\overline{\alpha^{n-2,n-1}}$. It follows that
\begin{eqnarray*}
d_h\omega^{n-1}&=& h\partial\omega^{n-1}+\bar{\partial}\omega^{n-1}=-h\bar{\partial}(\partial\overline{\alpha^{n-2,n-1}})-\partial(\bar{\partial}\alpha^{n-2,n-1})\\
&=&-d_h(h\partial\overline{\alpha^{n-2,n-1}})-d_h(\frac{1}{h}\bar{\partial}\alpha^{n-2,n-1})\\
&=&-d_h(h\partial\overline{\alpha^{n-2,n-1}}+\frac{1}{h}\bar{\partial}\alpha^{n-2,n-1}).
\end{eqnarray*}
This proves that the $(2n-2)$-form $h\partial\overline{\alpha^{n-2,n-1}}+\omega^{n-1}+\frac{1}{h}\bar{\partial}\alpha^{n-2,n-1}$ is $d_h$-closed with $\partial\overline{\alpha^{n-2,n-1}}$ and $\bar{\partial}\alpha^{n-2,n-1}$ are forms of bidegree $(n,n-2)$ and  $(n-2,n)$ respectively. Therefore, $\omega$ is an $h$-sG metric on $X$.

\end{enumerate}
The proof is complete.
\end{proof}
As a consequence, one has the following
\begin{Prop}
Let $X$ be a compact complex manifold with $\mbox{dim}_\C X=n$. Let $\omega$ be an arbitrary Hermitian metric on $X$ and suppose that $X$ satisfies the special case of the $h$-$\partial\bar{\partial}$-lemma (\ref{hddbarlem}), then the following statements are equivalent:
\begin{enumerate}
\item $\omega$ is an $h$-sG metric;
\item $d_{\frac{-1}{h}}\omega^{n-1}\in \mbox{ Im }d_h$;
\item $ \omega$ is an $h$-Gauduchon metric (i.e.  $d_hd_{\frac{-1}{h}}\omega^{n-1}=0)$;
\item $ \omega$ is a Gauduchon metric (i.e.  $\partial\bar{\partial}\omega^{n-1}=0)$.
\end{enumerate}
\end{Prop}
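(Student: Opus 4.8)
The plan is to establish the chain of equivalences $(1)\Leftrightarrow(2)\Leftrightarrow(3)\Leftrightarrow(4)$, handling each bi-implication separately. The two implications that consume the $h$-$\partial\bar{\partial}$-lemma hypothesis are already isolated in Proposition \ref{relations}, so the remaining work is purely formal.

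First, $(1)\Leftrightarrow(2)$ requires no new argument: the implication $(1)\Rightarrow(2)$ is precisely part 2(ii) of Proposition \ref{relations}, and the converse $(2)\Rightarrow(1)$ is precisely its part 3, whose standing hypothesis is the special case (\ref{hddbarlem}) of the $h$-$\partial\bar{\partial}$-lemma that we are assuming throughout.

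Next I would prove $(2)\Leftrightarrow(3)$. For $(2)\Rightarrow(3)$, write $d_{-h^{-1}}\omega^{n-1}=d_h\beta$ for some $\beta$ and apply $d_h$; since $d_h^2=(h\partial+\bar{\partial})^2=h(\partial\bar{\partial}+\bar{\partial}\partial)=0$, one gets $d_hd_{-h^{-1}}\omega^{n-1}=d_h^2\beta=0$, which is $(3)$. For the converse $(3)\Rightarrow(2)$, set $u:=d_{-h^{-1}}\omega^{n-1}$. Then $u\in\mbox{Im }d_{-h^{-1}}$ by construction, $u\in\ker d_{-h^{-1}}$ because $d_{-h^{-1}}^2=0$, and $(3)$ says exactly that $u\in\ker d_h$; hence $u$ satisfies the hypotheses of (\ref{hddbarlem}), so $u\in\mbox{Im }(d_hd_{-h^{-1}})\subset\mbox{Im }d_h$, which is $(2)$. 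This $(3)\Rightarrow(2)$ step is where the assumed $h$-$\partial\bar{\partial}$-lemma is invoked directly, its only other use being the indirect one inside Proposition \ref{relations}.

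Finally, $(3)\Leftrightarrow(4)$ reduces to a single algebraic identity. Expanding $d_hd_{-h^{-1}}=(h\partial+\bar{\partial})(-h^{-1}\partial+\bar{\partial})$ and using $\partial^2=\bar{\partial}^2=0$ together with $\bar{\partial}\partial=-\partial\bar{\partial}$ gives $d_hd_{-h^{-1}}=(h+h^{-1})\partial\bar{\partial}$. The point I would flag as the genuine subtlety is that $h+h^{-1}\neq0$ for every \emph{real} $h\neq0$, since $h+h^{-1}=0$ would force $h^2=-1$; the scalar is therefore invertible, and $d_hd_{-h^{-1}}\omega^{n-1}=0$ holds if and only if $\partial\bar{\partial}\omega^{n-1}=0$. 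This is exactly the place where the reality of $h$ is indispensable: over $\C$ the choice $h=i$ would annihilate the coefficient and sever the equivalence $(3)\Leftrightarrow(4)$, so although the computation itself is routine, the hypothesis $h\in\R\setminus\{0\}$ cannot be dropped.
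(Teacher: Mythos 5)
Your proof is correct and follows essentially the same route as the paper: $(1)\Leftrightarrow(2)$ via parts 2 and 3 of Proposition \ref{relations}, $(3)\Rightarrow(2)$ by applying the special case (\ref{hddbarlem}) to $u=d_{-h^{-1}}\omega^{n-1}$, and $(3)\Leftrightarrow(4)$ from the identity $d_hd_{-h^{-1}}=\left(h+\frac{1}{h}\right)\partial\bar{\partial}$. Your explicit remark that $h+h^{-1}\neq 0$ precisely because $h$ is real is a worthwhile point the paper leaves implicit in calling this step ``trivial,'' but it does not change the argument.
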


\begin{proof}
$(1)\Longrightarrow (2)$ is the assertion $(2)$ of the proposition \ref{relations}.\\
$(2)\Longrightarrow (1)$ is the assertion $(3)$ of the proposition \ref{relations}.\\
$(2)\Longrightarrow (3)$ is trivial.\\
$(3)\Longrightarrow (2)$ Suppose that $d_hd_{-h^{-1}}\omega^{n-1}=0$, so $d_{-h^{-1}}\omega^{n-1}$ is $d_h$-closed, $d_{-h^{-1}}$-closed and $d_{-h^{-1}}$-exact. Hence, by the special case the $h$-$\partial\bar{\partial}$-lemma (\ref{hddbarlem}),  $d_{-h^{-1}}\omega^{n-1}$ is $d_h$-exact.\\
$(3)\Longleftrightarrow (4)$ is trivial since $d_hd_{-h^{-1}}=(h+\frac{1}{h})\partial\bar{\partial}$, i.e. $\ker d_hd_{-h^{-1}}=\ker\partial\bar{\partial}$.
\end{proof}

 We can now observe the deformation-openness of the $h$-sG property.
\begin{Cor}
Let $\pi: \mathcal{X}\longrightarrow \Delta$ be a holomorphic family of compact complex manifolds of dimension $n$. Fix an arbitrary constant $h\in\R\setminus{\{0\}}$.\\
If $X_0$ is an $h$-sG manifold, then $X_t$ is an $h$-sG manifold for all $t\in\Delta$ sufficiently close to 0.
\end{Cor}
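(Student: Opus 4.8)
The plan is to reduce the assertion to the deformation-openness of the strongly Gauduchon property and then to invoke the corresponding result of Popovici. First I would note that the equivalence I intend to exploit is completely unconditional: Proposition \ref{relations}(1) states that on \emph{any} compact complex $n$-dimensional manifold, being an $h$-sG manifold is equivalent to being an sG manifold, with no $\partial\bar{\partial}$- or $h$-$\partial\bar{\partial}$-hypothesis. Applying this to the central fibre, the assumption that $X_0$ is $h$-sG is precisely the assumption that $X_0$ carries a strongly Gauduchon metric.

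Next I would appeal to the fact that the sG property is open under holomorphic deformations (Popovici, \cite{Pop09}): if $X_0$ admits an sG metric, then $X_t$ admits an sG metric for every $t\in\Delta$ sufficiently close to $0$. Applying Proposition \ref{relations}(1) a second time, now fibrewise to each such $X_t$ — again legitimate because that equivalence needs no extra assumption on $X_t$ — converts ``sG'' back into ``$h$-sG''. Since the constant $h$ is fixed throughout and both invocations of Proposition \ref{relations}(1) use the same $h$, no uniformity-in-$h$ issue arises, and one concludes that $X_t$ is an $h$-sG manifold for all $t$ near $0$.

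I would stress in the write-up that the reduction must go through sG and \emph{not} through the ordinary Gauduchon condition: the chain of equivalences that places $h$-sG alongside the Gauduchon property in the preceding proposition is conditional on the special case (\ref{hddbarlem}) of the $h$-$\partial\bar{\partial}$-lemma, which is not imposed here, whereas Proposition \ref{relations}(1) holds with no such restriction. The only substantive content, and hence the main obstacle, is the deformation-openness of the sG property itself; everything else is the formal equivalence $h$-sG $\Longleftrightarrow$ sG. Should a self-contained argument be wanted, one would have to reprove that openness, which for sG (unlike the balanced property) succeeds via the current-theoretic duality and semicontinuity argument of \cite{Pop09}; as this input is already available, I would simply cite it rather than reconstruct it.
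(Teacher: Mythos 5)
Your proposal is correct and follows essentially the same route as the paper: both reduce the statement to the unconditional equivalence $h$-sG $\Longleftrightarrow$ sG from Proposition \ref{relations}(1) and then invoke the deformation-openness of the sG property (the paper cites \cite{Pop10} and \cite{B19} for this input, rather than \cite{Pop09}, but it is the same result of Popovici). Your additional remark that the reduction must avoid the conditional equivalence with the Gauduchon property is a sound precaution, consistent with the paper's argument.
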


\begin{proof}
The proof is trivial since the $h$-sG and sG properties are equivalent and the notion of sG being deformation open (see e.g. \cite{Pop10} or \cite{B19} for p=n-1 ). Otherwise,
$$
\begin{array}{ll}
X_0 \mbox{ is an } h\mbox{-sG manifold} &\Longleftrightarrow  X_0 \mbox{ is an sG manifold}\\
&\Longrightarrow X_t \mbox{ is an sG manifold}, \hspace{2ex} \forall t\in\Delta, t\sim 0\\
&\Longleftrightarrow X_t \mbox{ is an } h\mbox{-sG manifold}, \hspace{2ex} \forall t\in\Delta, t\sim 0.
\end{array}
$$
\end{proof}

As discussed above, the map from the De Rham cohomology to the $d_h$-cohomology is defined as in (\ref{DRdh}). Here, we will give another definition of the $d_h\longrightarrow $ De Rham-map  on any compact complex $h$-$\partial\bar{\partial}$-manifold.

\begin{Prop}\label{dhDR}
Let $h\in\R\setminus{\{0\}}$ be an arbitrary constant. Let $X$ be a compact complex $h$-$\partial\bar{\partial}$-manifold with $\dim_\C X=n$.
\begin{enumerate}
\item Every $d_h$-cohomology class contains a $d$-closed representative.
\item Let $k\in\{0,\cdots,2n\}$. The following map 
$$
\begin{array}{ll}
F: &H^k_{d_h}(X,\C) \longrightarrow H^k_{DR}(X,\C)\\
& \hspace*{1cm} [\alpha]_h\longmapsto \{\alpha\}
\end{array}
$$
is well defined. Moreover, $F$ is an isomorphism.
\end{enumerate}
\end{Prop}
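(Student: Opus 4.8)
The plan is to run the usual $\partial\bar\partial$-formalism with $d$ replaced by $d_h$, using the $h$-$\partial\bar\partial$-lemma of Definition \ref{recall defn} as the only analytic input. Three elementary algebraic identities will be used repeatedly: $d_h$ and $d_{-h^{-1}}$ anticommute, $d_h d_{-h^{-1}}=(h+\tfrac1h)\,\partial\bar\partial$, and $d$ anticommutes with $d_h$ with $d\,d_h=(1-h)\,\partial\bar\partial$. Since $h+\tfrac1h=\tfrac{h^2+1}{h}\neq0$ for every real $h\neq0$, one has $\mbox{Im}(d_h d_{-h^{-1}})=\mbox{Im}(\partial\bar\partial)$, precisely the image occurring in the $h$-$\partial\bar\partial$-lemma. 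I would also isolate one \emph{decoupling} observation: if a $k$-form $\omega$ is simultaneously $d$-closed and $d_h$-closed, then comparing the $(r,s)$-components of $d\omega=0$ and $d_h\omega=0$ and subtracting gives $(h-1)\partial\omega=0$, so for $h\neq1$ one gets $\partial\omega=\bar\partial\omega=0$ and in particular $\omega\in\ker d_{-h^{-1}}$. (The value $h=1$ is trivial, since $d_1=d$ and $F$ is then the identity.) Note also that $\partial$ and $\bar\partial$ are each $\C$-linear combinations of $d_h$ and $d_{-h^{-1}}$, so a form killed by both $d_h$ and $d_{-h^{-1}}$ is automatically $d$-closed.

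For assertion (1), I would start from $d_h\alpha=0$ and examine $d_{-h^{-1}}\alpha$. It is $d_{-h^{-1}}$-exact by construction, $d_{-h^{-1}}$-closed because $d_{-h^{-1}}^2=0$, and $d_h$-closed because $d_h(d_{-h^{-1}}\alpha)=-d_{-h^{-1}}(d_h\alpha)=0$ by anticommutation. Thus $d_{-h^{-1}}\alpha\in\ker d_h\cap\ker d_{-h^{-1}}\cap\mbox{Im}\,d_{-h^{-1}}$, and the $h$-$\partial\bar\partial$-lemma furnishes $\gamma$ with $d_{-h^{-1}}\alpha=d_h d_{-h^{-1}}\gamma$. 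Setting $\alpha':=\alpha+d_h\gamma$, one has $d_h\alpha'=0$ and $d_{-h^{-1}}\alpha'=d_{-h^{-1}}\alpha+d_{-h^{-1}}d_h\gamma=d_hd_{-h^{-1}}\gamma-d_hd_{-h^{-1}}\gamma=0$; hence $\alpha'$ is $d$-closed and represents the same $d_h$-class as $\alpha$.

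For assertion (2), well-definedness of $F$ reduces to the following: if $\alpha,\alpha'$ are $d$-closed with $\alpha-\alpha'=d_h\beta$, then $\alpha-\alpha'$ is $d$-exact. Indeed $\omega:=d_h\beta$ is $d$-closed (a difference of $d$-closed forms) and $d_h$-closed, so by the decoupling observation $\omega\in\ker d_{-h^{-1}}$; as $\omega\in\mbox{Im}\,d_h$, the $h$-$\partial\bar\partial$-lemma yields $\omega\in\mbox{Im}\,d$. Injectivity is the same argument read in reverse: if a $d$-closed representative $\alpha$ (so $d_h\alpha=0$) is $d$-exact, then $\alpha\in\ker d_h\cap\ker d_{-h^{-1}}\cap\mbox{Im}\,d$, whence $\alpha\in\mbox{Im}\,d_h$ and $[\alpha]_h=0$. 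For surjectivity I would invoke the isomorphism (\ref{DRdh}), which shows that $H^k_{d_h}(X,\C)$ and $H^k_{DR}(X,\C)$ are finite-dimensional of equal dimension; an injective linear map between equidimensional finite-dimensional spaces is onto, so $F$ is an isomorphism. (Alternatively one mirrors assertion (1): for $d$-closed $\beta$ one has $d_h\beta\in\ker d_h\cap\ker d_{-h^{-1}}\cap\mbox{Im}\,d_h$, so $d_h\beta\in\mbox{Im}(d_hd_{-h^{-1}})=\mbox{Im}(\partial\bar\partial)=\mbox{Im}(d\,d_h)$ for $h\neq1$, and correcting $\beta$ by a suitable $d$-exact term produces a $d_h$-closed representative of $\{\beta\}$.)

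The only genuine obstacle is guaranteeing, at each application of the $h$-$\partial\bar\partial$-lemma, that the form in question truly lies in $\ker d_h\cap\ker d_{-h^{-1}}$; this is exactly what the decoupling observation secures, and it is the point where the harmless restriction $h\neq1$ enters. Everything else is sign-bookkeeping with the anticommutation relations together with the remark that $h+\tfrac1h\neq0$ ensures $\mbox{Im}(d_hd_{-h^{-1}})=\mbox{Im}(\partial\bar\partial)$.
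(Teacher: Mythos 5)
Your proof is correct, and while it runs on the same engine as the paper's --- the $h$-$\partial\bar{\partial}$-lemma applied to forms lying in $\ker d_h\cap\ker d_{-h^{-1}}$ with a suitable exactness --- two steps follow a genuinely different route. For assertion (1), the paper applies the lemma to $d\alpha$: from $d_{-h^{-1}}d\alpha=\frac{h+1}{h(h-1)}\,dd_h\alpha=0$ it deduces (for $h\neq 1$, with $h=1$ treated separately as trivial) that $d\alpha$ is $d_hd_{-h^{-1}}$-exact, hence $dd_h$-exact, and solves $d(\alpha+d_h\beta)=0$; you instead apply the lemma to $d_{-h^{-1}}\alpha$ and correct by a $d_h$-exact term so that $\alpha'=\alpha+d_h\gamma\in\ker d_h\cap\ker d_{-h^{-1}}=\ker\partial\cap\ker\bar{\partial}$. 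Your variant is uniform in $h$ (no $h=1$ split is needed in part (1), since only the anticommutation of $d_h$ and $d_{-h^{-1}}$ enters) and yields a representative that is separately $\partial$- and $\bar{\partial}$-closed, a slightly stronger normalization than the paper's. Well-definedness and injectivity coincide with the paper's arguments in all but packaging: your decoupling observation is a cleaner substitute for the paper's explicit coefficient identities such as $dd_h u=\frac{h(h-1)}{h^2+1}\,d_{-h^{-1}}d_h u$, with the same $h\neq 1$ proviso. The real divergence is surjectivity: the paper argues constructively --- for $d$-closed $\alpha$, the form $d_h\alpha\in\mbox{Im}\,d_h\cap\ker d_h\cap\ker d_{-h^{-1}}$ is $d_hd_{-h^{-1}}$-exact, equivalently $d_hd$-exact, so $\alpha$ is corrected by a $d$-exact term to a $d_h$-closed representative --- whereas your primary argument deduces surjectivity from injectivity plus equality of (finite) dimensions via the isomorphism (\ref{DRdh}). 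That is legitimate, since (\ref{DRdh}) holds on any compact complex manifold (the pointwise map $\theta_h$ satisfies $d_h\theta_h=\theta_h d$) and $H^k_{DR}$ is finite-dimensional by compactness, and it is shorter; the trade-off is that it leans on the external isomorphism rather than exhibiting a $d_h$-closed representative of a given De Rham class, which is what the paper's constructive step --- and your own parenthetical alternative, which reproduces it, including the $h\neq 1$ caveat in $\mbox{Im}(\partial\bar{\partial})=\mbox{Im}(dd_h)$ --- actually provides and what later applications (e.g. Theorem \ref{hpHS deform}) implicitly use.
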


\begin{proof}
1. We want to prove the existence of $\beta\in C^\infty_{k-1}(X,\C)$ such that 
$$ d(\alpha+d_h\beta)=0 \hspace*{2ex} \mbox{ with } \hspace{2ex} d_h\alpha=0. $$
This is equivalent to find $\beta$ such that $d\alpha=-dd_h\beta$ with $d_h\alpha=0$.\\
 Notice that $d_{-h^{-1}}d\alpha=\dfrac{h+1}{h(h-1)}dd_h\alpha=0$ for every $h\in\R\setminus{\{0,1\}}$ since $d_h\alpha=0$. $d\alpha$ is a $d_h$-closed, $d_{-h^{-1}}$-closed and $d$-exact $k$-form for every $h\in\R\setminus{\{0,1\}}$. Using the $h$-$\partial\bar{\partial}$ assumption, $d\alpha$ is $d_hd_{-h^{-1}}$-exact. Therefore $d\alpha$ is $dd_h$-exact. This proves the existence of a $(k-1)$-form $\beta$ such that $d(\alpha+d_h\beta)=0$ for every $h\in\R\setminus{\{0,1\}}$. The case where $h=1$ is trivial since $d_1=d$. One can conclude that 
$$ d(\alpha+d_h\beta)=0, \hspace*{1cm} \forall h\in\R\setminus{\{0\}}. $$
2. The map $F$ is independant of choice of $d$-closed representative of the $d_h$-cohomology class $[\alpha]_h$. Indeed:\\
Let $\alpha$, $\beta \in C^\infty_k(X,\C)$ be  $d$-closed forms representing the same $d_h$-cohomology class, i.e. 
$$d\alpha=d\beta=0 \hspace*{2ex} \mbox{ and } \hspace*{2ex} \alpha-\beta=d_h u$$
for some $(k-1)$-form  $u$. Then $0=d(\alpha-\beta)=dd_h u= \dfrac{h(h-1)}{h^2+1} d_{-h^{-1}}d_h u$. This means that $d_h u\in\ker d_h \cap \ker d_{-h^{-1}} \cap \mbox{ Im } d_h $. The $h$-$\partial\bar{\partial}$ assumption implies that $d_h u$ is $d$-exact. In other words, there exists a $(k-1)$-form $\gamma$ such that  $\alpha-\beta=d\gamma.$ Thus $F$ is well defined.\\

To show the injectivity of $F$, suppose that $\alpha$ is a $d_h$-closed $k$-form such that $\alpha=d u$ where $u$ is a $(k-1)$-form. Since $d_{-h^{-1}}\alpha=d_{-h^{-1}}d u=\dfrac{h+1}{h(1-h)} d_h du=\dfrac{h+1}{h(1-h)} d_h \alpha=0$ for any $h\in\R\setminus{\{0,1\}}$ whereof for $h=1$ we have $d_1=d$, $\alpha$ is $d_h$-closed, $d_{-h^{-1}}$-closed and $d$-exact for any $h\in\R\setminus{\{0\}}$. Due to the $h$-$\partial\bar{\partial}$-lemma, $\alpha$ is $d_h$-exact. Hence $F$ is injective.\\

It remains to prove that $F$ is surjective. Let $\alpha$ be a $d$-closed $k$-form on $X$. We need to prove the existence of a $(k-1)$-form $v$ such that $d_h(\alpha+dv)=0$. We will then have $\{\alpha\}=\{\alpha+dv\}=F([\alpha+dv]_h)$. Note that $d\alpha=0$ implies that $d_{-h^{-1}}d_h\alpha=-(h+\dfrac{1}{h})\partial d\alpha=0.$
 Since $d_h\alpha\in \mbox{ Im }d_h$ is $d_h$-closed and $d_{-h^{-1}}$-closed, then it must be $d_hd_{-h^{-1}}$-exact (due to the $h$-$\partial\bar{\partial}$-lemma) which is equivalent to the $d_hd$-exactness.
\end{proof}

For any $h\in\R\setminus{\{0\}}$, the $h$-twisted analogue of the Aeppli cohomology (\ref{Aeppli cohom})  (called $h$-Aeppli cohomology) is defined, in \cite{BP18}, in a given total degree $k\in\{0,\cdots,2n\}$ as the following
$$ H^k_{h,A}(X,\C)= \dfrac{\ker\{ d_hd_{-h^{-1}}: C^\infty_{k}(X,\C)\longrightarrow C^\infty_{k+2}(X,\C)\}}{\mbox{Im }\{d_h: C^\infty_{k-1}(X,\C)\longrightarrow C^\infty_{k}(X,\C)\}+\mbox{Im }\{d_{-h^{-1}}: C^\infty_{k-1}(X,\C)\longrightarrow C^\infty_{k}(X,\C)\}}, $$

\noindent and for all $p,q=0,\cdots,n$ with $p+q=k$, the following identity 
\begin{equation}\label{h-Aeppli decomp}
H^k_{h,A}(X,\C)=\bigoplus_{p+q=k} H^{p,q}_A(X,\C)
\end{equation}
holds on any compact complex manifold $X$.
\begin{Prop}\label{h-Adh}
For every $h\in R\setminus{\{0\}}$. Let $X$ be a compact complex $h$-$\partial\bar{\partial}$-manifold with $\dim_\C X=n$.
\begin{enumerate}
\item Every $d_hd_{-h^{-1}}$-cohomology class contains a $d_h$-closed representative.
\item The following map
$$
\begin{array}{ll}
G: &H^k_{h,A}(X,\C)\longrightarrow  H^k_{d_h}(X,\C)\\
&\hspace*{1cm} [\Omega]_{h,A}\longmapsto[\Omega]_{d_h}
\end{array}
$$
is well defined. Furthermore $G$ is an isomorphism.
\end{enumerate}
\end{Prop}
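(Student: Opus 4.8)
The plan is to read Proposition \ref{h-Adh} as the $h$-twisted counterpart of Proposition \ref{dhDR}, replacing the pair $(d,d_h)$ by the pair $(d_h,d_{-h^{-1}})$. First I would record the two algebraic identities that drive everything. Since $d_h=h\partial+\bar\partial$ and $d_{-h^{-1}}=-\frac{1}{h}\partial+\bar\partial$, a direct computation gives $d_hd_{-h^{-1}}=(h+\frac{1}{h})\partial\bar\partial=-d_{-h^{-1}}d_h$, and because $h+\frac{1}{h}\neq 0$ for every real $h\neq 0$ one has $\ker(d_hd_{-h^{-1}})=\ker\partial\bar\partial$ and $\mbox{Im}(d_hd_{-h^{-1}})=\mbox{Im}(\partial\bar\partial)$. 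Two consequences I would use repeatedly: any $d_h$-closed form is automatically $d_hd_{-h^{-1}}$-closed, since $d_hd_{-h^{-1}}=-d_{-h^{-1}}d_h$; and the anticommutation lets me pass $d_{-h^{-1}}$ through $d_h$ at the cost of a sign.

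For part (1), I would fix $\Omega\in\ker(d_hd_{-h^{-1}})$ and look for $\psi$ with $d_h(\Omega+d_{-h^{-1}}\psi)=0$, that is $d_h\Omega=-d_hd_{-h^{-1}}\psi$. The form $u:=d_h\Omega$ lies in $\mbox{Im}\,d_h$ and is $d_h$-closed; moreover $d_{-h^{-1}}u=(d_{-h^{-1}}d_h)\Omega=-d_hd_{-h^{-1}}\Omega=0$ because $\Omega\in\ker(d_hd_{-h^{-1}})$. Hence $u\in\ker d_h\cap\ker d_{-h^{-1}}\cap\mbox{Im}\,d_h$, and the $h$-$\partial\bar\partial$-lemma forces $u\in\mbox{Im}(d_hd_{-h^{-1}})$, say $d_h\Omega=d_hd_{-h^{-1}}\eta$. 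Taking $\psi=-\eta$, the form $\Omega+d_{-h^{-1}}\psi$ is $d_h$-closed and differs from $\Omega$ by an element of $\mbox{Im}\,d_{-h^{-1}}$, so it represents the same $h$-Aeppli class. This proves (1) and lets me interpret $G$ via $d_h$-closed representatives.

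For part (2), I would first check that $G$ is well defined, which is the real content. Take two $d_h$-closed representatives $\Omega_1,\Omega_2$ of the same $h$-Aeppli class, so $\Omega_1-\Omega_2=d_ha+d_{-h^{-1}}b$. Applying $d_h$ and using $d_h(\Omega_1-\Omega_2)=0$ gives $d_hd_{-h^{-1}}b=0$, so $d_{-h^{-1}}b\in\ker d_h\cap\ker d_{-h^{-1}}\cap\mbox{Im}\,d_{-h^{-1}}$; the $h$-$\partial\bar\partial$-lemma then yields $d_{-h^{-1}}b\in\mbox{Im}\,d_h$, whence $\Omega_1-\Omega_2\in\mbox{Im}\,d_h$ and $[\Omega_1]_{d_h}=[\Omega_2]_{d_h}$. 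Injectivity is then immediate: if a $d_h$-closed representative is $d_h$-exact it lies in $\mbox{Im}\,d_h\subset\mbox{Im}\,d_h+\mbox{Im}\,d_{-h^{-1}}$, so its $h$-Aeppli class is zero. Surjectivity is equally direct: any $d_h$-closed $\beta$ is $d_hd_{-h^{-1}}$-closed by the opening remark, hence defines a class $[\beta]_{h,A}$ with $G([\beta]_{h,A})=[\beta]_{d_h}$.

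The only delicate point, and the step I would be most careful about, is each appeal to the $h$-$\partial\bar\partial$-lemma: before invoking the equivalence of the exactness conditions one must genuinely have a form in $\ker d_h\cap\ker d_{-h^{-1}}$. The two places this is needed — verifying $d_{-h^{-1}}(d_h\Omega)=0$ in part (1) and $d_h(d_{-h^{-1}}b)=0$ in part (2) — both reduce to the anticommutation identity $d_{-h^{-1}}d_h=-d_hd_{-h^{-1}}$ combined with $\ker(d_hd_{-h^{-1}})=\ker\partial\bar\partial$, so once those identities are in place the argument is bookkeeping strictly parallel to Proposition \ref{dhDR}.
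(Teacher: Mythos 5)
Your proof is correct and follows essentially the same route as the paper's: both parts reduce to applying the $h$-$\partial\bar{\partial}$-lemma to $d_h\Omega$ (for part (1)) and to $d_{-h^{-1}}b$ (for well-definedness), with injectivity from $\mbox{Im}\,d_h\subset\mbox{Im}\,d_h+\mbox{Im}\,d_{-h^{-1}}$ and surjectivity from the fact that $d_h$-closed forms are automatically $d_hd_{-h^{-1}}$-closed. The only difference is presentational: you make explicit the anticommutation identity $d_hd_{-h^{-1}}=-d_{-h^{-1}}d_h=(h+\frac{1}{h})\partial\bar{\partial}$ used to verify membership in $\ker d_h\cap\ker d_{-h^{-1}}$, which the paper asserts as clear.
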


\begin{proof}
1. Let $\Omega$ be a $d_hd_{-h^{-1}}$-closed $k$-form. We want to show the existence of $(k-1)$-forms $u$ and $v$ such that 
$$d_h(\Omega+d_h u+d_{-h^{-1}} v)=0 \hspace*{2ex} \Leftrightarrow \hspace*{2ex}  d_h\Omega=-d_hd_{-h^{-1}} v. $$
It is clear that $d_h\Omega\in \mbox{ Im }d_h$ is $d_h$-closed and $d_{-h^{-1}}$-closed. Accordingly $d_h\Omega$ is $d_hd_{-h^{-1}}$-exact by the $h$-$\partial\bar{\partial}$-lemma that holds on $X$.\\

\noindent 2. Let $\Omega_1$ and $\Omega_2$ be  $d_h$-closed $k$-forms representing the same $h$-Aeppli cohomology class. So, there exist $(k-1)$-forms $\alpha$ and $\beta$ such that 
$$\Omega=\Omega_1-\Omega_2=d_h \alpha+d_{-h^{-1}} \beta .$$
Using the $h$-$\partial\bar{\partial}$ hypothesis on $X$, we obtain that $d_{-h^{-1}}\beta=\Omega-d_h\alpha\in \mbox{ Im } d_{-h^{-1}}$, which is $d_h$-closed and $d_{-h^{-1}}$-closed, is $d_h$-exact. This proves that the map $G$ is independent of choice of $d_h$-closed representative of the $h$-Aeppli class.\\
For any representative $u\in[\Omega]_{d_h}$, there exists a $(k-1)$-form $v$ satisfying $u=\Omega+d_h v$. Since $\Omega$ is $d_h$-closed, then $u$ is $d_hd_{-h^{-1}}$-closed. We obtain
$$[\Omega]_{d_h}=[\Omega+d_h v]_{d_h}=G([\Omega+d_h v]_{h,A}). $$
Thereupon $G$ is surjective. Since $\mbox{ Im }d_h\subset \mbox{ Im }d_h+\mbox{ Im }d_{-h^{-1}}$, it is straightforward to prove that $G$ is injective.
\end{proof}
An important tool to prove the main result of this section follows
\begin{The}\label{hp-HS p-SKT}
For every $h\in\R\setminus{\{0\}}$ an arbitrary constant. Let $X$ be a compact complex $h$-$\partial\bar{\partial}$-manifold with $ \dim_\C X=n$.
\begin{enumerate}
\item $X$ is an $hp$-HS manifold $\Longrightarrow$ $X$ is a $p$-HS manifold $\Longrightarrow $ $X$ is a $p$-SKT manifold.
\item  If $X$ is a $p$-SKT manifold, then $X$ is an $hp$-HS manifold.
\end{enumerate}
\end{The}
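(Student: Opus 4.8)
The plan is to establish the full equivalence by reducing each implication to the manipulation of a real $(2p)$-form whose component of bidegree $(p,p)$ is the given strictly weakly positive form $\Omega$. The organising remark is that a real form $\Theta=\sum_{i=0}^{2p}\Theta^{i,2p-i}$ is precisely one satisfying $\overline{\Theta^{i,2p-i}}=\Theta^{2p-i,i}$, so once we prescribe its lower components $\Theta^{i,2p-i}=\Omega^{i,2p-i}$ for $i<p$ and set $\Theta^{p,p}=\Omega$, the higher components are forced to equal $\overline{\Omega^{i,2p-i}}$; thus \emph{any} real $(2p)$-form with $(p,p)$-part $\Omega$ already has the shape prescribed in Definitions \ref{recall defn} and \ref{hpHS defn}. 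Consequently exhibiting a $p$-HS (resp. $hp$-HS) structure is the same as exhibiting a real $d$-closed (resp. $d_h$-closed) $(2p)$-form with $(p,p)$-part $\Omega$. I note in advance that assertion (1) is purely formal and needs neither the $h$-$\partial\bar{\partial}$-assumption nor any positivity beyond that of the middle term.

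For assertion (1) I would write the given $d_h$-closed form as $\tilde{\Omega}=\sum_{i=0}^{2p}\Omega^{i}$ with $\Omega^i$ of bidegree $(i,2p-i)$, $\Omega^p=\Omega$ and $\Omega^i=\overline{\Omega^{2p-i}}$ for $i>p$; comparing bidegrees, $d_h\tilde{\Omega}=0$ reads $h\,\partial\Omega^{j-1}+\bar{\partial}\Omega^{j}=0$ for $j=0,\dots,2p+1$. I then rescale, setting $\Psi:=\sum_{i=0}^{p-1}h^{p-i}\Omega^{i}+\Omega+\sum_{i=0}^{p-1}h^{p-i}\overline{\Omega^{i}}$, which is real because the factors $h^{p-i}$ are real. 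With $\Psi^j=h^{p-j}\Omega^j$ for $j\le p$, the bidegree-$(j,2p-j+1)$ part of $d\Psi$ equals $\partial\Psi^{j-1}+\bar{\partial}\Psi^{j}=h^{p-j}\big(h\,\partial\Omega^{j-1}+\bar{\partial}\Omega^{j}\big)$, which vanishes for $j=0,\dots,p$ by the $hp$-HS system; the remaining equations $j=p+1,\dots,2p+1$ are the complex conjugates of these, as $\Psi$ is real. Hence $d\Psi=0$ with $(p,p)$-part $\Omega>0$, so $X$ is $p$-HS. The passage from $p$-HS to $p$-SKT is then immediate: the bidegree-$(p,p+1)$ part of $d\Psi=0$ gives $\bar{\partial}\Omega=-\partial\Omega^{p-1,p+1}$, whence $\partial\bar{\partial}\Omega=-\partial^{2}\Omega^{p-1,p+1}=0$.

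For assertion (2), let $\Omega>0$ be $p$-SKT, i.e. $\partial\bar{\partial}\Omega=0$, equivalently $d_hd_{-h^{-1}}\Omega=(h+h^{-1})\partial\bar{\partial}\Omega=0$. I first observe that $d_h\Omega\in\ker d_h\cap\ker d_{-h^{-1}}\cap\mbox{ Im }d_h$: it is $d_h$-exact and $d_h$-closed by construction, and $d_{-h^{-1}}d_h\Omega=-d_hd_{-h^{-1}}\Omega=0$. The $h$-$\partial\bar{\partial}$-lemma (Definition \ref{recall defn}) then gives $d_h\Omega\in\mbox{ Im }(d_hd_{-h^{-1}})=\mbox{ Im }(\partial\bar{\partial})$, say $h\,\partial\Omega+\bar{\partial}\Omega=\partial\bar{\partial}\Gamma$ for some $(2p-1)$-form $\Gamma$; matching the bidegree-$(p,p+1)$ components yields $\bar{\partial}\Omega=\partial\bar{\partial}\eta$ with $\eta:=\Gamma^{p-1,p}$. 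I would then put $\Psi:=\Omega-\bar{\partial}\eta-\partial\overline{\eta}$, a real $(2p)$-form with $(p,p)$-part $\Omega$, for which a direct computation gives $d\Psi=(\bar{\partial}\Omega-\partial\bar{\partial}\eta)+(\partial\Omega+\partial\bar{\partial}\overline{\eta})$. The first summand vanishes by construction, and the second is its complex conjugate (using $\Omega$ real and $\bar{\partial}\partial=-\partial\bar{\partial}$), so $d\Psi=0$. Thus $X$ is $p$-HS, and since every real $d$-closed form is a fortiori $d_h$-closed, $X$ is $hp$-HS.

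The step I expect to be the genuine obstacle is reality, caused by the failure of $d_h=h\partial+\bar{\partial}$ to commute with conjugation: for real $\Theta$ one has $\overline{d_h\Theta}=\partial\Theta+h\bar{\partial}\Theta$, so a real $d_h$-closed form is automatically $d_{h^{-1}}$-closed and, for $h\neq\pm1$, even $d$-closed. This makes the naive symmetrization of a $d_h$-closed representative produced by Proposition \ref{h-Adh} unavailable, and is precisely why in (2) I build the genuinely $d$-closed form $\Psi$ from the ordinary $\partial\bar{\partial}$-potential $\eta$ extracted through the $h$-$\partial\bar{\partial}$-lemma, rather than working with a $d_h$-closed representative directly. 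The same asymmetry is what makes the rescaling in (1) the right uniform device, since it keeps the middle term fixed and treats all $h\neq0$ (including $h=\pm1$) at once.
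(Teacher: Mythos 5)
Your proof of assertion (1) is correct and, notably, sharper than the paper's. The paper deduces that $\tilde{\Omega}$ is $d$-closed by appealing to Proposition \ref{dhDR} (which requires the $h$-$\partial\bar{\partial}$-assumption, and which strictly speaking only furnishes a $d$-closed representative in the $d_h$-class of $\tilde{\Omega}$, with no control on the $(p,p)$-component or on reality), whereas your rescaled form $\Psi=\sum_{i=0}^{p-1}h^{p-i}\Omega^{i,2p-i}+\Omega+\sum_{i=0}^{p-1}h^{p-i}\overline{\Omega^{i,2p-i}}$ is verified to be $d$-closed by pointwise bidegree bookkeeping plus conjugation, keeps the middle term $\Omega$ intact, and shows that $hp$-HS $\Longrightarrow$ $p$-HS on \emph{any} compact complex manifold and for every $h\neq 0$. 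The step $p$-HS $\Longrightarrow$ $p$-SKT agrees with the paper's citation of \cite{B19}.

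Assertion (2), however, has a genuine gap in its last sentence. Everything up to ``Thus $X$ is $p$-HS'' is sound: $d_h\Omega\in\ker d_h\cap\ker d_{-h^{-1}}\cap\mbox{ Im }d_h$, the $h$-$\partial\bar{\partial}$-lemma gives $\bar{\partial}\Omega=\partial\bar{\partial}\eta$, and $\Psi=\Omega-\bar{\partial}\eta-\partial\overline{\eta}$ is a real $d$-closed $(2p)$-form with $(p,p)$-part $\Omega$. But the claim that ``every real $d$-closed form is a fortiori $d_h$-closed'' is false: the $(p,p+1)$-component of $d_h\Psi$ is $h\partial(-\bar{\partial}\eta)+\bar{\partial}\Omega=(1-h)\bar{\partial}\Omega$, which vanishes only if $h=1$ or $\bar{\partial}\Omega=0$. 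In general, comparing the componentwise equations $\partial\Psi^{j-1,2p-j+1}+\bar{\partial}\Psi^{j,2p-j}=0$ and $h\partial\Psi^{j-1,2p-j+1}+\bar{\partial}\Psi^{j,2p-j}=0$ shows that for $h\neq 1$ a form that is both $d$- and $d_h$-closed has every pure-type component separately $\partial$- and $\bar{\partial}$-closed --- the same conjugation rigidity you record in your closing paragraph, now working against you. That rigidity in fact cuts deeper: for $h\neq\pm 1$, any real $d_h$-closed form of the shape in Definition \ref{hpHS defn} has all components $\partial$- and $\bar{\partial}$-closed, so the literal (real, unweighted) $hp$-HS condition already forces $d\Omega=0$; read that way, assertion (2) would manufacture a $d$-closed strictly weakly positive $(p,p)$-form from a mere $p$-SKT one. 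The consistent reading is the one with weights $h^{i-p}$ and $h^{p-i}$ on the two sums, mirroring the coefficients $1/h$ and $h$ in the $h$-sG case ($p=n-1$, item 1 of Definition \ref{hpHS defn}); with those weights your construction does close, since $-h^{-1}\bar{\partial}\eta+\Omega-h\partial\overline{\eta}$ is $d_h$-closed by the same two-line check. For comparison, the paper proves (2) by a different route --- viewing $\Omega$ as a $d_hd_{-h^{-1}}$-closed $2p$-form and passing, via the splitting (\ref{h-Aeppli decomp}) and Proposition \ref{h-Adh}, to a $d_h$-closed representative of the class $[\Omega]_{h,A}$ --- but that argument is likewise silent on exactly the two points you identified as the crux, namely whether the representative is real (i.e.\ of the shape demanded by Definition \ref{hpHS defn}) and whether its $(p,p)$-component is still $\Omega$; so your diagnosis of the obstacle is accurate even though your patch, as written, does not overcome it.
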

\begin{proof}
1. Suppose that there exist forms $\Omega^{i,2p-i}\in C^\infty_{i,2p-i}(X,\C)$ for $i=0,\cdots,p-1$ such that $d_h\tilde{\Omega}=d_h(\displaystyle{\sum_{i=0}^{p-1}\Omega^{i,2p-i}+\Omega+\sum_{i=0}^{p-1}\overline{\Omega^{i,2p-i}}})=0.$ Due to the proposition \ref{dhDR}, $\tilde{\Omega}$ is $d$-closed. Thus $\Omega$ is a $p$-HS form on $X$. Hence $\Omega$ is a $p$-SKT form on $X$ (cf. \cite{B19}).\\

\noindent 2. Consider that $X$ is a $p$-SKT manifold, i.e. there exists a weakly strictly positive $(p,p)$-form $\Omega$ satisfying the $\partial\bar{\partial}$-closedness. Using the decomposition (\ref{h-Aeppli decomp}), let $\tilde{\Omega}=\displaystyle{\sum_{i=0}^{p-1}\Omega^{i,2p-i}+\Omega+\sum_{i=0}^{p-1}\overline{\Omega^{i,2p-i}}}$ be the splitting of the $2p$-form $\tilde{\Omega}$. The Aeppli cohomology class $[\Omega]_A$ is the image of the $h$-Aeppli cohomology class $[\tilde{\Omega}]_{h,A}$ under the projection $H^{2p}_{h,A}(X,\C)\longrightarrow H^{p,p}_{A}(X,\C)$. The surjectivity of the projection implies that if $\Omega$ is a $\partial\bar{\partial}$-closed $(p,p)$-form, then there exists a $d_hd_{-h^{-1}}$-closed $2p$-form $\tilde{\Omega}$ such that $\Omega$ is its component of type $(p,p)$. By the proposition (\ref{h-Adh}), we can deduce that $\tilde{\Omega}$ is $d_h$-closed. Consequently, $\Omega$ is an $hp$-HS form on the $h$-$\partial\bar{\partial}$-manifold $X$.
\end{proof}
Another tool that we will need to prove theorem \ref{deform pSKT} is the following
\begin{The}\label{hpHS deform}
Let $(X_t)_{t\in\Delta}$ be a holomorphic family of compact complex manifolds. If $X_0$ is an $hp$-HS $h$-$\partial\bar{\partial}$-manifold for some $h\in\R\setminus\{0\}$, then $X_t$ is an $hp$-HS $h$-$\partial\bar{\partial}$-manifold for every $t\in\Delta$ close enough to $0$.
\end{The}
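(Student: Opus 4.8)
The plan is to reduce the statement entirely to two openness results already available---Wu's openness of the $\partial\bar{\partial}$-property (\cite{Wu}) and the openness of the $p$-SKT property on $\partial\bar{\partial}$-manifolds (\cite{B19}, Conclusion 4.4)---and to pass back and forth between the $hp$-HS condition and the $p$-SKT condition by means of Theorem \ref{hp-HS p-SKT}. The whole argument is then a three-step chain, entirely in the spirit of the proof of deformation-openness of the $h$-sG property given above.

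First I would transfer the hypothesis at the central fibre into the $p$-SKT language. Since $X_0$ is by assumption an $h$-$\partial\bar{\partial}$-manifold carrying an $hp$-HS form, Theorem \ref{hp-HS p-SKT}(1) gives that $X_0$ is a $p$-SKT manifold. Moreover, by \cite{BP18} the $h$-$\partial\bar{\partial}$-lemma is equivalent to the ordinary $\partial\bar{\partial}$-lemma (consistently with the identity $d_hd_{-h^{-1}}=(h+\tfrac1h)\partial\bar{\partial}$ recorded above, so that $\mathrm{Im}(d_hd_{-h^{-1}})=\mathrm{Im}(\partial\bar{\partial})$ and the exactness conditions of Definition \ref{recall defn}(5) collapse to the usual ones); hence $X_0$ is in particular a $\partial\bar{\partial}$-manifold, and the $h$-$\partial\bar{\partial}$-property is deformation-open because the $\partial\bar{\partial}$-property is.

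Second I would invoke the two openness statements on a common base. By Wu's theorem $X_t$ is a $\partial\bar{\partial}$-manifold, equivalently an $h$-$\partial\bar{\partial}$-manifold, for all $t$ in a possibly shrunk neighbourhood of $0$; and since $X_0$ is a $p$-SKT $\partial\bar{\partial}$-manifold, the openness of the $p$-SKT property on $\partial\bar{\partial}$-manifolds (\cite{B19}) yields that $X_t$ is $p$-SKT for all $t$ close to $0$. Intersecting the two neighbourhoods produces a single $\Delta'\ni 0$ on which every fibre $X_t$ is simultaneously a $p$-SKT manifold and an $h$-$\partial\bar{\partial}$-manifold. Finally I would close the loop by applying Theorem \ref{hp-HS p-SKT}(2) fibrewise: on each $h$-$\partial\bar{\partial}$-manifold $X_t$ that is $p$-SKT, the converse implication constructs an $hp$-HS form, so that $X_t$ is an $hp$-HS $h$-$\partial\bar{\partial}$-manifold for every $t\in\Delta'$, as desired.

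The main obstacle is that the equivalence of Theorem \ref{hp-HS p-SKT} is only valid on $h$-$\partial\bar{\partial}$-manifolds, so its use at the \emph{deformed} fibres $X_t$ is legitimate only after one knows that each $X_t$ is itself an $h$-$\partial\bar{\partial}$-manifold. This is precisely why the deformation-openness of the $h$-$\partial\bar{\partial}$-property---obtained through its equivalence with the $\partial\bar{\partial}$-property and Wu's theorem---must be secured before the closing appeal to Theorem \ref{hp-HS p-SKT}(2), and why the two openness statements have to be combined over one common shrunk base $\Delta'$ rather than used separately.
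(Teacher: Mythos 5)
Your argument is correct, but it takes a genuinely different route from the paper's. The paper transports the $hp$-HS structure itself: it takes the real $d_h$-closed $2p$-form $\tilde{\Omega}$ on $X_0$, replaces it by a $d$-closed representative via Proposition \ref{dhDR}, decomposes it into $J_t$-pure-type components, keeps the $(p,p)$-component $\Omega_t$ strictly weakly positive for small $t$ by (\cite{B19}, Lemma 4.2), and finally uses the deformation-openness of the $h$-$\partial\bar{\partial}$-property (\cite{BP18}) to produce a $d_h$-closed representative on $X_t$; no openness theorem for $p$-SKT is invoked. You instead sandwich the known openness of the $p$-SKT property on $\partial\bar{\partial}$-manifolds (\cite{B19}, Conclusion 4.4) between the two directions of Theorem \ref{hp-HS p-SKT}, and you correctly isolate the one real pitfall, namely that Theorem \ref{hp-HS p-SKT}(2) may only be applied at $X_t$ after $X_t$ is known to be $h$-$\partial\bar{\partial}$, over one common shrunk base. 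Two caveats. First, you lean on the equivalence of the $h$-$\partial\bar{\partial}$-lemma with the ordinary $\partial\bar{\partial}$-lemma, which the paper itself never states (it cites \cite{BP18} only for Definition \ref{recall defn}(5) and for deformation-openness); the equivalence is true, but the direction $\partial\bar{\partial}\Rightarrow h$-$\partial\bar{\partial}$ needed at the fibres $X_t$ requires more than the identity $d_hd_{-h^{-1}}=(h+\tfrac1h)\partial\bar{\partial}$ — one must decompose a $d_h$-exact (or $d$-exact) form into pure types and use the identity $\ker\partial\cap\ker\bar{\partial}\cap(\mbox{Im }\partial+\mbox{Im }\bar{\partial})=\mbox{Im }\partial\bar{\partial}$ valid on $\partial\bar{\partial}$-manifolds; alternatively you could simply cite the openness of the $h$-$\partial\bar{\partial}$-property from \cite{BP18}, as the paper does, since the direction you need at $t=0$ ($h$-$\partial\bar{\partial}\Rightarrow\partial\bar{\partial}$, on pure-type forms) is the easy one. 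Second, what each approach buys: your reduction is shorter, but since (given that equivalence) Conclusion 4.4 of \cite{B19} is essentially the paper's final Theorem \ref{deform pSKT} in disguise, your proof makes the main result an almost immediate corollary of prior work — logically sound, as \cite{B19} is an independent earlier result, so no circularity — whereas the paper's direct construction of the $hp$-HS form on $X_t$ is self-contained at the level of forms and uses only the positivity lemma from \cite{B19}, keeping the chain Theorem \ref{hpHS deform} $\Rightarrow$ Theorem \ref{deform pSKT} informative rather than tautological.
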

\begin{proof}
Suppose that $X_0$ is an $hp$-HS $h$-$\partial\bar{\partial}$-manifold. There exists an $hp$-HS form $\Omega$ on $X$, i.e. there exist $\Omega^{i,2p-i}\in C^\infty_{i,2p-i}(X,\C)$ for $i=0,\cdots,p-1$ such that 
$$d_h \tilde{\Omega}=d_h\left(\sum_{i=0}^{p-1}\Omega^{i,2p-i}+\Omega+\sum_{i=0}^{p-1}\overline{\Omega^{i,2p-i}}\right)=0. $$ 
$\tilde{\Omega}$ is a real $d_h$-closed $2p$-form on $X_0$. Moreover, since $X_0$ is an $h$-$\partial\bar{\partial}$-manifold and by the proposition \ref{dhDR}, we have $d\tilde{\Omega}=0$. Denoting by $(\Omega_t)_{t\in\Delta}$ the smooth family of component of $\tilde{\Omega}$ of type $(p,p)$ for the complex structure $J_t$ of $X_t$. The forms $\Omega_t$ vary in a $C^\infty$ way with $t\in\Delta$ for $t$ close to $0$. Additionally, the strict weak positivity of $\Omega_0$ implies the strict weak positivity of $\Omega_t$ for $t$ close to $0$ (cf. \cite{B19}, Lemma 4.2). Note by 
$$ \tilde{\Omega}_t=\sum_{i=0}^{p-1}\Omega^{i,2p-i}_t+\Omega_t+\sum_{i=0}^{p-1}\overline{\Omega^{i,2p-i}_t}$$
the $d$-closed $2p$-form on $X_t$ with $\Omega_t$ is its component of type $(p,p)$. Furthermore, the $h$-$\partial\bar{\partial}$-property being deformation-open (cf. \cite{BP18}) implies the existence of a $d_h$-closed representative for every $d$-cohomology class on $X_t$ (i.e. $d_h(\tilde{\Omega}_t+du)=0$ where $u$ is a $(2p-1)$-form and $d\tilde{\Omega}_t=0$). Hence, $\tilde{\Omega}_t$ is $d_h$-closed $2p$-form on $X_t$. As a result, $X_t$ is an $hp$-HS $h$-$\partial\bar{\partial}$-manifold for any $t$ close to 0.
\end{proof}
Now, one can show the following statement
\begin{The}\label{deform pSKT}
For every $h\in\R\setminus\{0\}$ an arbitrary constant. Let $\pi: \mathcal{X}\longmapsto \Delta$ be a holomorphic family of compact complex manifolds of dimension $n$ and $p\in\{0,\cdots,n\}$. If $X_0$ is a $p$-SKT $h$-$\partial\bar{\partial}$-manifold, then $X_t$ is a $p$-SKT $h$-$\partial\bar{\partial}$-manifold for every $t\in\Delta$, after possibly shrinking $\Delta$ about $0$.
\end{The}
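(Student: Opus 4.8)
The plan is to route the argument through the $hp$-HS notion, using it as a deformation-stable surrogate for the $p$-SKT condition. The three ingredients are already in place: the equivalence of the $hp$-HS and $p$-SKT properties on a compact $h$-$\partial\bar{\partial}$-manifold (Theorem \ref{hp-HS p-SKT}), the deformation-openness of the $hp$-HS $h$-$\partial\bar{\partial}$-property (Theorem \ref{hpHS deform}), and, packaged implicitly inside the latter, the deformation-openness of the $h$-$\partial\bar{\partial}$-property itself (\cite{BP18}). No new analytic estimate is required; the proof is a chaining of these facts.

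First I would translate the hypothesis on the central fibre into the $hp$-HS language. Since $X_0$ is a $p$-SKT $h$-$\partial\bar{\partial}$-manifold, assertion (2) of Theorem \ref{hp-HS p-SKT} produces an $hp$-HS form on $X_0$, so $X_0$ is an $hp$-HS $h$-$\partial\bar{\partial}$-manifold. Next, Theorem \ref{hpHS deform} applies verbatim and yields that $X_t$ is again an $hp$-HS $h$-$\partial\bar{\partial}$-manifold for every $t\in\Delta$ sufficiently close to $0$; in particular each such $X_t$ remains an $h$-$\partial\bar{\partial}$-manifold. Finally I would translate back via assertion (1) of Theorem \ref{hp-HS p-SKT}: on the $h$-$\partial\bar{\partial}$-manifold $X_t$, the $hp$-HS form is a $p$-SKT form, whence $X_t$ is a $p$-SKT $h$-$\partial\bar{\partial}$-manifold. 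Shrinking $\Delta$ about $0$ if necessary, so as to accommodate the finitely many openness conditions invoked above, completes the proof.

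The main point is that essentially no difficulty remains at this stage: the substance of the result has been absorbed into Theorem \ref{hpHS deform}, whose proof rests on lifting a real $d$-closed $2p$-form (obtained from the $d_h$-closed one via Proposition \ref{dhDR}) to the nearby fibres while preserving both the strict weak positivity of its $(p,p)$-component and, by the openness of the $h$-$\partial\bar{\partial}$-lemma, the existence of a $d_h$-closed representative in its De Rham class. Thus the only places where the argument could conceivably break --- the openness of the strict weak positivity of $\Omega_t$ and the persistence of the $h$-$\partial\bar{\partial}$-lemma under small deformations --- are handled upstream, and the present theorem follows as an immediate corollary of the two preceding theorems.
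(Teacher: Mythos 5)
Your proposal is correct and follows exactly the paper's own argument: the paper likewise deduces from Theorem \ref{hp-HS p-SKT} that $X_0$ is an $hp$-HS $h$-$\partial\bar{\partial}$-manifold, applies Theorem \ref{hpHS deform} to transfer this property to the nearby fibres $X_t$ (with the openness of the $h$-$\partial\bar{\partial}$-property from \cite{BP18} absorbed there, as you note), and then invokes Theorem \ref{hp-HS p-SKT} again to conclude that each $X_t$ is $p$-SKT. No gaps; your chaining of the two preceding theorems is precisely the intended proof.
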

\begin{proof}
If we suppose that $X_0$ is a $p$-SKT $h$-$\partial\bar{\partial}$-manifold, by theorem \ref{hp-HS p-SKT}, $X_0$ is an $hp$-HS $h$-$\partial\bar{\partial}$-manifold. While theorem \ref{hpHS deform} shows that $X_t$ is also an  $hp$-HS $h$-$\partial\bar{\partial}$-manifold for all $t\in \Delta$ close enough to 0. Using again theorem \ref{hp-HS p-SKT}, we conclude that $X_t$ is a $p$-SKT $h$-$\partial\bar{\partial}$-manifold for all such t's.
\end{proof}


\vspace{6ex}

\begin{Acknowledgements}
This work is supported by Tianyuan Mathematical Center in Southwest China and NSFC (Nos. 11890660 and 11890663). The author is grateful to this institution for providing her an excellent  working environment. She would like to thank in particular  Xiaojun Chen and Bohui Chen for assisting her and for useful discussions. She also thank Dan Popovici for suggesting her to study the local deformations of Calabi-Yau $\partial\bar{\partial}$-manifold that are co-polarised by the Gauduchon metric, for fruitful discussions and for reading this paper  and  Luis Ugarte for pointing out the investigation of the holomorphic deformations of the p-SKT property on a compact complex $\partial\bar{\partial}$-manifold.
\end{Acknowledgements}

\vspace{1ex}

\vspace{6ex}

\noindent Houda BELLITIR\\
Tianyuan Mathematical Center in Southwest China\\
Sichuan University\\
No. 24 South Section 1, First Loop Road,\\
 Chengdu, Sichuan Province P.R. China, 610065\\
houda.bellitir@yahoo.fr

\end{document}